 \newtheorem{theorem}{Theorem}[section]
\newtheorem{lemma}[theorem]{Lemma}
\newtheorem{corollary}[theorem]{Corollary}
\newtheorem{proposition}[theorem]{Proposition}
\newtheorem{claim}{Claim} 
 \theoremstyle{definition}
 \newtheorem{definition}[theorem]{Definition}
 \newtheorem{remark}[theorem]{Remark}
 \newtheorem{example}[theorem]{Example}
\numberwithin{equation}{section}
\newcommand {\N}{\mathbb{N}} %% positive integers
\newcommand {\Z}{\mathbb{Z}} %% integers
\newcommand {\R}{\mathbb{R}} %% reals
\newcommand {\C}{\mathbb{C}} %% complex
\newcommand{\Proj}{\mathbb{P}}
\newcommand{\EE}{\mathcal{E}}
\newcommand{\A}{\mathbb{A}} %% affine space
\DeclareMathOperator{\Per}{Per}
\DeclareMathOperator{\Fix}{Fix}
\DeclareMathOperator{\im}{Im}
\DeclareMathOperator{\Id}{Id}
\DeclareMathOperator{\Stab}{Stab}
\DeclareMathOperator{\Spec}{Spec}
\DeclareMathOperator{\NW}{NW}
\DeclareMathOperator{\Rec}{R}
\DeclareMathOperator{\Crec}{CR}
 \DeclareMathOperator{\CPSFT}{CPSFT} % countably-proconstructible sft 
 \DeclareMathOperator{\CPS}{CPS} % countably-proconstructible sofic  
\begin{document}
\title[Endomorphisms of algebraic sofic shifts]{Invariant sets and nilpotency of endomorphisms of algebraic sofic shifts}
\author[T. Ceccherini-Silberstein]{Tullio Ceccherini-Silberstein}
\address{Dipartimento di Ingegneria, Universit\`a del Sannio, C.so
Garibaldi 107, 82100 Benevento, Italy}
\email{tullio.cs@sbai.uniroma1.it}
\author[M. Coornaert]{Michel Coornaert}
\address{Universit\'e de Strasbourg, CNRS, IRMA UMR 7501, F-67000 Strasbourg, France}
\email{michel.coornaert@math.unistra.fr}
\author[X.K.Phung]{Xuan Kien Phung}
\address{D\'epartement d'informatique et de recherche op\'erationnelle,  Universit\'e de Montréal, Montr\'eal, Québec, H3T 1J4, Canada}
\email{phungxuankien1@gmail.com}
\subjclass[2010]{37B15, 14A10, 14A15, 37B10, 68Q80}
\keywords{Algebraic variety, algebraic cellular automaton,  algebraic sofic subshift, space-time inverse system, 
nilpotency, limit set, infinite alphabet symbolic system}
\begin{abstract}
Let $G$ be a  group  and let $V$ be an algebraic variety over an algebraically closed field $K$. 
Let $A$ denote the set of $K$-points of $V$. 
We introduce algebraic sofic subshifts $\Sigma \subset A^G$ and study
endomorphisms $\tau \colon \Sigma \to \Sigma$. 
We generalize several results for dynamical invariant sets and nilpotency of $\tau$ 
that are well known for finite alphabet cellular automata. 
Under mild assumptions, we prove that $\tau$ is nilpotent if and only if its limit set, 
i.e., the intersection of the images of its iterates, is a singleton. 
If moreover $G$ is infinite, finitely generated and $\Sigma$ is topologically mixing, 
we show that $\tau$ is nilpotent if and only if its limit set consists of periodic configurations and 
has a finite set of alphabet values. 
\end{abstract}
\date{\today}
\maketitle

\setcounter{tocdepth}{1}
\tableofcontents
% SECTION 1
\section{Introduction} 
The main goal of the present paper is to extend and generalize  
well known results about limit sets and nilpotency of classical cellular automata, 
i.e., cellular automata with finite alphabets, to the setting of algebraic cellular automata over algebraic sofic subshifts, 
where the alphabet is the set of rational points of an algebraic variety. 
\par 
Since the pioneering work of John von Neumann in the 1940s~\cite{neumann-book},  
the mathematical theory of cellular automata has led to very interesting questions,
with deep connections to areas such as theoretical computer science, decidability, 
dynamical systems, ergodic theory, harmonic analysis, and geometric group theory.
In his empirical classification of the long-term behaviour of classical cellular automata, 
Wolfram~\cite{wolfram-univ-1984} introduced the notion of a limit set. 
For classical cellular automata, properties of limit sets and their relations with various notions of nilpotency were subsequently investigated by several authors 
(see in particular~\cite{milnor-ca-1988}, \cite{culik-limit-sets-1989}, \cite{kari-nilpotency-1992}, \cite{guillon-richard-2008}, \cite{salo-nil-2017}). In particular, Aanderaa and Lewis \cite{A-L} and, independently, Kari \cite{kari-nilpotency-1992} 
proved undecidability of nilpotency for classical cellular automata over $\Z$: this undecidability result
constitutes one of the most influential results in the theory of cellular automata and one of the main motivations for the study of
nilpotency in the symbolic dynamics setting.
In general, these properties of limit sets become false when the alphabet is allowed to be infinite. 
A major problem arising when working with infinite alphabets 
is that images of subshifts of finite type may fail to be closed (e.g.~\cite[Example~3.3.3]{book}).  
Nevertheless, infinite alphabet subshifts and their dynamics are not only intrinsically interesting but also fundamental to the study 
of smooth dynamical systems  
(cf. e.g.~\cite{boyle-buzzi-gomez-2006}, \cite[Ch.~7]{kitchens-book}, \cite{sarig-1999}, \cite{sarig-2013}, \cite{lima-sarig-2019} and the references therein).  
\par 
After Gromov \cite{gromov-esav}, the study of injectivity and surjectivity of algebraic cellular automata was  
pursued 
in~\cite{csc-algebraic}, \cite{csc-cat}, \cite{ccp-2019}, \cite{ccp-goe-2020}, \cite{phung-2018} 
to obtain generalizations of the Ax-Grothendieck theorem 
\cite{ax-injective}, \cite[Proposition~10.4.11]{grothendieck-4-3}  
and of the Moore-Myhill Garden of Eden theorem \cite{moore},~\cite{myhill}. 
Nilpotency is in the opposite direction since a nilpotent map is never~injective 
nor surjective when the underlying set has at least two
elements. 
\par
To state our results, let us first introduce some terminology and notation. 
Let $f \colon X \to X$ be a map from a set $X$ into itself.
Given an integer $n \geq 1$, the $n$-th iterate of $f$ is the map $f^n \colon X \to X$
defined by $f^n \coloneqq f \circ f \circ \cdots \circ f$ ($n$ times). 
The sets $f^n(X)$, $n \geq 1$, form a decreasing sequence of subsets of $X$. 
The \emph{limit set} $\Omega(f) \coloneqq \bigcap_{n \geq 1} f^n(X)$ of $f$ is the set of points  
that occur after iterating $f$  arbitrarily many times. 
\par 
Observe that  $f(\Omega(f)) \subset  \Omega(f)$. 
The inclusion may be strict
and equality holds if and only if every $x \in \Omega(f)$ admits a \emph{backward orbit}, 
i.e., a sequence $(x_i)_{i \geq 0}$ of points of $X$ such that 
$x_0 = x$ and $f(x_{i + 1}) = x_i$ for all $i \geq 0$. 
Clearly, $f$ is surjective if and only if $\Omega(f) = X$. 
Note also that $\Per(f) \coloneqq \bigcup_{n \geq 1} \{ x\in X \colon f^n(x)=x \} \subset \Omega(f)$
and that $\Omega(f^n) = \Omega(f)$ for every $n \geq 1$. 
The map  $f$ is \emph{stable} if $f^{n+1}(X)=f^n(X)$ for some $n \geq 1$. 
If $f$ is stable then $\Omega(f) \not= \varnothing$ unless $X = \varnothing$.
Clearly, $f$ is stable whenever $X$ is finite. 
If $X$ is infinite, there always exist maps $f \colon X \to X$ with $\Omega(f) = \varnothing$ (cf.~Lemma~\ref{l:non-nil-limit-one-pt}). 
\par
Assume that $X$ is a topological space and $f \colon X \to X$ is a continuous map. 
One says that $x \in X$ is a \emph{recurrent} (resp.~\emph{non-wandering}) point of $f$ if 
for every neighborhood $U$ of $x$, there exists $n \geq 1$ such that $f^n(x) \in U$ 
(resp.~$f^n(U)$ meets $U$).
Let $\Rec(f)$ (resp.~$\NW(f)$) denote the set of recurrent (resp.~non-wandering) points of $f$.
It is immediate that $\Per(f) \subset \Rec(f) \subset \NW(f)$ and that $\NW(f)$ is a closed subset of $X$.
In general, neither $\Per(f)$, nor $\Rec(f)$, nor $\Omega(f)$ are closed in $X$ 
(see Example~\ref{ex:limit-set-not-closed}).
\par
Suppose now that $X$ is a uniform space and $f \colon X \to X$ is a uniformly continuous map.
One says that a point $x \in X$ is \emph{chain-recurrent} if for every entourage $E$ of $X$ there exist an integer $n \geq 1$ and
a sequence of points $x_0,x_1,\dots,x_n \in X$ such that $x = x_0 = x_n$ and $(f(x_i),x_{i + 1}) \in E$ for all $0 \leq i \leq n - 1$.
We shall denote by  $\Crec(f)$   the set of chain-recurrent  points of $f$.
Observe that  $\Crec(f)$ is always closed in $X$.
\par
Let $G$ be a group and let  $A$ be a set, called the \emph{alphabet}.
The set $A^G  \coloneqq   \{x  \colon G \to A\}$, consisting of all maps from $G$ to $A$,  
is called the set of \emph{configurations} over the group $G$ and the alphabet $A$.
We equip $A^G = \prod_{g \in G} A$ with its \emph{prodiscrete uniform  structure}, i.e., 
the product uniform structure obtained by taking the discrete uniform structure on each factor $A$ of $A^G$.
Note that   $A^G$ is a totally disconnected Hausdorff space and that $A^G$ is compact if and only if $A$ is finite.
The \emph{shift action} of the group $G$  on $A^G$ 
is the action defined by $(g,x) \mapsto g x$, where $gx(h) \coloneqq  x(g^{-1}h)$ for all 
$g,h \in G$ and $x \in A^G$.
This action is uniformly continuous with respect to the prodiscrete uniform structure. 
\par 
For a subgroup $H \subset G$, define $\Fix(H) \coloneqq \{x \in A^G \colon hx = x \mbox{ for all } h \in H\}$. 
Then $\Fix(G)$ is the set of constant configurations while $\Fix(\{1_G\}) = A^G$. 
A configuration $x \in A^G$ is said to be \emph{periodic} 
if its $G$-orbit is finite, i.e., 
there is a finite index subgroup $H$ of $G$ such that $x \in \Fix(H)$. 
\par
A  $G$-invariant subset  $\Sigma \subset A^G$ is called a \emph{subshift} of $A^G$.
Note that we do not require closedness  in $A^G$ in our definition of a subshift.
\par
Given a finite subset   $D \subset G$ and 
a (finite or infinite) subset  $P \subset A^D$, 
the set 
\begin{equation}
\label{e:sft} 
\Sigma(D,P) \coloneqq \{ x \in A^G \colon (g^{-1}  x)\vert_{D} \in P \text{ for all } g \in G\}
\end{equation}
is a closed subshift of $A^G$
(here $(g^{-1}x)\vert_D \in A^D$ denotes the restriction of the configuration $g^{-1}x$ to  $D$). 
One says that $ \Sigma(D,P)$ is the 
\emph{subshift of finite type} associated with $(D,P)$ and that $D$ is a
\emph{defining memory set} for~$\Sigma$. 
\par
 Let $B$ be another alphabet set. 
 A map $\tau \colon B^G  \to A^G$ is called a \emph{cellular automaton} 
 if there exist a finite subset $M \subset G$ 
and a map $\mu \colon  B^M \to A$ such that 
\begin{equation} 
\label{e;local-property}
\tau(x)(g) = \mu( (g^{-1}x)\vert_M)  \quad  \text{for all } x \in B^G \text{ and } g \in G.
\end{equation}
Such a set $M$ is then called a \emph{memory set} and $\mu$ is called a \emph{local defining map} for $\tau$. 
It is clear from the definition that every cellular automaton 
$\tau \colon B^G \to A^G$ is uniformly continuous and $G$-equivariant 
(see~\cite{book}). 
\par 
More generally, if $\Sigma_1 \subset B^G$ and $\Sigma_2 \subset A^G$ are subshifts, 
a map $\tau \colon \Sigma_1 \to \Sigma_2$ is a \emph{cellular automaton} if it can be extended to 
a cellular automaton $B^G \to A^G$. 
\par 
Suppose  now that $U, V$ are algebraic varieties (resp. algebraic groups) over a field $K$, 
and let $A \coloneqq V(K)$, $B \coloneqq U(K)$ denote the sets of $K$-points of $V$ and $U$,  
i.e., the set consisting of all $K$-scheme morphisms $\Spec(K) \to V$ and $\Spec(K) \to U$, respectively.
See \cite[Appendix~A]{ccp-2019}, \cite[Section~2]{ccp-goe-2020} for basic definitions and properties of algebraic varieties.  
The following definition was introduced in the case $U=V$ in \cite[Definition~1.1]{ccp-2019} and \cite{phung-2018} after Gromov~\cite{gromov-esav}. 
A cellular automaton $\tau \colon B^G \to A^G$ is an \emph{algebraic (resp. algebraic group) cellular automaton}  
if $\tau$ admits a memory set $M$ 
with local defining map $\mu \colon B^M \to A$ 
 induced by some algebraic morphism (resp. homomorphism of algebraic groups)  
$f \colon U^M \to V$
(here $U^M$ denotes the $K$-fibered product of a family of copies of $U$ indexed by $M$). 
More generally, given subshifts $\Sigma_1 \subset B^G$ and $\Sigma_2 \subset A^G$,
a map $\tau \colon \Sigma_1 \to \Sigma_2$ is called an \emph{algebraic} (resp.~\emph{algebraic group}) 
\emph{cellular automaton}
if it is the restriction of some algebraic (resp.~algebraic group) cellular automaton $\tilde{\tau} \colon B^G \to A^G$  
(see Section~\ref{s:extension} for an example). 
\par
 Every cellular automaton with finite alphabet $A$ is an algebraic cellular automaton over any field $K$
(see the remarks after \cite[Definition~1]{ccp-2019}). Indeed, it suffices to embed $A$ as a subset
of $K$ and then observe that, if $M$ is a finite set, any map
$\mu \colon A^M \to A$ is the restriction of some polynomial map $P \colon  K^M \to K$ 
(which can be made explicit by using Lagrange interpolation formula). 
Similarly, any linear cellular automaton (cf.\ \cite[Chapter 8]{book}, \cite{JPAA}) 
is an algebraic cellular automaton: if $A$ is a finite-dimensional vector space over a field $K$, 
and $M$ is a finite set, then any linear map $\mu \colon A^M \to A$ is clearly a polynomial.

\begin{definition}
\label{d:alg-sft}
One says that $\Sigma \subset A^G$
is an \emph{algebraic (resp. algebraic group) subshift of finite type}
if there exist a finite subset $D \subset G$ and an algebraic subvariety (resp.\ algebraic subgroup) $W \subset V^D$ such that,
with the notation introduced in \eqref{e:sft},
one has $\Sigma = \Sigma(D,W(K))$. 
\end{definition}

By analogy with the definition of sofic subshifts in the classical setting, we define
algebraic sofic subshifts and algebraic group sofic subshifts as follows 
(see Definition~\ref{d:cpsft} for more general notions). 

\begin{definition} 
\label{d:alg-sofic-shift} 
Let $G$ be a group and let $V$ be an algebraic variety (resp. algebraic group) over a field $K$. 
Let $A \coloneqq V(K)$. 
A subset $\Sigma \subset A^G$  is called an \emph{algebraic (resp. algebraic  group) sofic subshift} 
if it is the image of an algebraic (resp. algebraic group) subshift of finite type $\Sigma' \subset B^G$, 
where $B=U(K)$ and $U$ is a $K$-algebraic variety (resp. $K$-algebraic group),
under an algebraic (resp. algebraic group) cellular automaton $\tau' \colon B^G \to A^G$.  
\end{definition} 

Every algebraic sofic subshift $\Sigma \subset A^G$ is indeed a subshift 
but it may fail to be closed in $A^G$ (cf.~Exemple~\ref{ex:limit-set-not-closed}). 
However, it turns out that, under suitable  natural conditions (see (H1), (H2), (H3) below), 
all algebraic sofic subshifts $\Sigma \subset A^G$ are closed in $A^G$ (cf.~Corollary~\ref{c:sofic-closed}). 
Moreover, we establish a fundamental characterization of algebraic subshifts of finite type by the descending chain property 
(cf. Theorem~\ref{t:noetheiran-alg-sft}). 
\par 
With the notation as in Definition \ref{d:alg-sofic-shift},
we shall investigate in this paper various dynamical aspects of an algebraic cellular automaton 
$\tau \colon \Sigma \to \Sigma$ over an algebraic sofic subshift $\Sigma \subset A^G$ satisfying one of the following hypotheses 
(with the same notations throughout the paper): 
\begin{enumerate}[(H1)]
\item
$K$ is an uncountable algebraically closed field (e.g.~$K = \C$);
\item
$K$ is algebraically closed and $U, V$ are complete (e.g.~projective) algebraic varieties over $K$;  
\item
$K$ is algebraically closed, $V$ is an algebraic group over $K$, $\Sigma \subset A^G$ is an algebraic group sofic subshift,
and $\tau \colon \Sigma \to \Sigma$ is an algebraic group cellular automaton. 
\end{enumerate}
\par 
We shall establish the following result.  
\begin{theorem}
\label{t:limit-set-not-empty}
Let $G$ be a group and let $V$ be an algebraic variety over a field $K$. 
Let $A \coloneqq V(K)$ and let $\Sigma \subset A^G$ be an algebraic sofic subshift. 
Let $\tau \colon \Sigma \to \Sigma$ be an algebraic cellular automaton and 
assume that one of the conditions $(\mathrm{H1})$, $(\mathrm{H2})$, $(\mathrm{H3})$ is satisfied. 
Then the following hold:
\begin{enumerate}[\rm (i)]
\item
 $\Omega(\tau)$ is a closed subshift of $A^G$;
  \item
 $\tau(\Omega(\tau)) = \Omega(\tau)$; 
 \item
 $\Per(\tau) \subset \Rec(\tau) \subset \NW(\tau) \subset \Crec(\tau)  \subset \Omega(\tau)$; 
  \item 
if $(\mathrm{H2})$ or $(\mathrm{H3})$ is satisfied and  
$\Omega(\tau)$ is a subshift of finite type, then $\tau$ is stable;  
\item 
for every subgroup $H \subset G$, 
if $\Sigma \cap \Fix(H) \neq \varnothing$ then $\Omega(\tau) \cap \Fix(H) \neq \varnothing$. 
\end{enumerate}
\end{theorem}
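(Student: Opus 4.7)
The plan is to combine two black-box facts from earlier in the paper: each iterate $\tau^n$ is again an algebraic cellular automaton, so each $\tau^n(\Sigma)$ is an algebraic sofic subshift; and, under any of (H1)--(H3), every algebraic sofic subshift is closed in $A^G$ by Corollary~\ref{c:sofic-closed}. Part~\textbf{(i)} is then immediate: $\Omega(\tau) = \bigcap_{n \geq 1} \tau^n(\Sigma)$ is a decreasing intersection of closed $G$-invariant subsets. For~\textbf{(iii)}, the first two inclusions $\Per(\tau) \subset \Rec(\tau) \subset \NW(\tau)$ are formal; the inclusion $\NW(\tau) \subset \Crec(\tau)$ follows from uniform continuity of $\tau$ by a standard argument (given an entourage $E$, choose a symmetric $E'$ with $(\tau \times \tau)(E') \subset E$ and $E' \circ E' \subset E$, then turn a non-wandering witness $y$ with $\tau^n(y)$ close to $x$ into the $E$-chain $x, \tau(y), \tau^2(y), \dots, \tau^{n-1}(y), x$). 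For $\Crec(\tau) \subset \Omega(\tau)$, given $x \in \Crec(\tau)$ and $k \geq 1$, I produce, for each finite $F \subset G$, a point $z_F \in \Sigma$ with $\tau^k(z_F)$ agreeing with $x$ on $F$ by extracting an intermediate vertex of a sufficiently long chain (concatenating a chain with itself if needed, and controlling the accumulated error via the finite memory of $\tau^k$); closedness of $\tau^k(\Sigma)$ then forces $x \in \tau^k(\Sigma)$.

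The technical heart of the theorem is \textbf{(ii)}, the identity $\tau(\Omega(\tau)) = \Omega(\tau)$. Given $x \in \Omega(\tau)$, the sets $C_n := \tau^{-1}(x) \cap \tau^n(\Sigma)$ form a decreasing sequence of closed subsets of $\Sigma$; each is nonempty, since writing $x = \tau^{n+1}(w)$ exhibits $\tau^n(w) \in C_n$. The goal is to conclude $\bigcap_n C_n \neq \varnothing$, and since $A^G$ is not compact when $A$ is infinite, this is not automatic. The plan is to realise the $C_n$ as $K$-points of a projective system of algebraic varieties via the space-time inverse system machinery alluded to in the introduction, and then establish nonemptyness of the inverse limit case by case: under (H1), uncountability of $K$ combined with a Baire-type argument on countably many constructible conditions forces a common point; under (H2), completeness of the varieties makes the transition morphisms closed and stabilises onto a nonempty common image; under (H3), the Mittag-Leffler property for countable inverse systems of affine algebraic groups with surjective transition homomorphisms gives the same conclusion. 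Part~\textbf{(v)} is handled by the same inverse-system argument applied to the $\tau$-invariant closed subset $\Sigma \cap \Fix(H)$, which is nonempty by hypothesis and whose iterates form a decreasing sequence with intersection $\Omega(\tau) \cap \Fix(H)$.

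For~\textbf{(iv)}, the idea is to use the descending chain property for algebraic subshifts of finite type (Theorem~\ref{t:noetheiran-alg-sft}). Taking a defining memory set $D$ of the finite-type limit $\Omega(\tau)$, the projections $\pi_D(\tau^n(\Sigma)) \subset V^D$ form a decreasing sequence of constructible subsets (closed algebraic under (H2) by completeness, or closed algebraic subgroups under (H3)) of the Noetherian scheme $V^D$, hence they stabilise at the defining pattern set of $\Omega(\tau)$ for large $n$; coupling this stabilisation with the finite-type structure of $\Omega(\tau)$ and the algebraic SFT upgrade of $\tau^n(\Sigma)$ available under (H2)/(H3) then forces $\tau^n(\Sigma) = \Omega(\tau)$ for large $n$, i.e.\ $\tau$ is stable. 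I expect the main obstacle to be the inverse-limit nonemptyness step used in (ii) and (v): the translation into projective systems of algebraic varieties is routine, but verifying nonemptyness uniformly under the three essentially different hypotheses (H1), (H2), (H3) requires genuinely different algebraic inputs, and carrying this out rigorously over the prodiscrete topology of an infinite-alphabet configuration space is the delicate technical core of the whole proof.
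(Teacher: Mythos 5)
Your proposal follows essentially the same route as the paper's proof: closedness of the iterates $\tau^n(\Sigma)$ via the closed-mapping theorem for (i) and for $\Crec(\tau)\subset\Omega(\tau)$ in (iii), the space-time inverse system together with the three case-by-case nonemptiness lemmas (uncountable field, completeness, algebraic-group translates with closed images) for (ii) and (v), and the Noetherian descending-chain characterization of algebraic subshifts of finite type for (iv). Two harmless imprecisions: in (v) one only has $\bigcap_n \tau^n(\Sigma\cap\Fix(H))\subset\Omega(\tau)\cap\Fix(H)$ rather than equality (nonemptiness of the left-hand side is all that is needed, and one must first restrict to a finitely generated subgroup containing the memory sets before the space-time machinery applies), and in the (H3) case the relevant transition maps are neither surjective nor homomorphisms --- the actual input is that their images are translates of algebraic subgroups, hence Zariski closed, so the closed-image inverse-limit lemma applies; your concatenation trick for chains does correctly replace the paper's bounded/unbounded case split in the proof that $\Crec(\tau)\subset\Omega(\tau)$.
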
 
\par
See \cite[Theorem 1.5]{JPAA} for a linear version of the above
theorem.
\par
One  says that a map $f \colon X \to X$ from a set $X$ into itself  is \emph{nilpotent}
if there exist  a constant map  $c \colon X \to X$ and an integer $n_0 \geq 1$ such that
$f^{n_0} = c$.
This implies $f^n = c$ for all $n \geq n_0$. 
Such  a constant map $c$ is then  unique and
we say that the unique  point  $x_0 \in X$ such that $c(x) = x_0$ for all $x \in X$ is the \emph{terminal point} of $f$.
The terminal point of a nilpotent map is its unique fixed point. 
\par 
Observe that if $f \colon X \to X$ is nilpotent with terminal point $x_0$ then $\Omega(f) = \{x_0\}$ is a singleton.
The converse is not true in general.
Actually, as soon as the set $X$ is infinite,
there exist non-nilpotent  maps $f \colon X \to X$ whose limit set  is reduced to a single point (cf.~Lemma~\ref{l:non-nil-limit-one-pt}).  
However, in the algebraic setting, we obtain the following result. 
\begin{theorem}
\label{t:char-nilpotent-alg-ca}
If we keep the same notation and hypotheses as in Theorem~\ref{t:limit-set-not-empty}, 
then the following conditions are equivalent: 
\begin{enumerate}[\rm (a)]
\item
$\tau$ is nilpotent;
\item
the limit set $\Omega(\tau)$ is reduced to a single configuration. 
\end{enumerate}
\end{theorem}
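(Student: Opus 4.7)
The implication $(a) \Rightarrow (b)$ is immediate from the definition: if $\tau^{n_0}$ is the constant map with value $x_0$, then $\tau^n(\Sigma) = \{x_0\}$ for all $n \geq n_0$, and hence $\Omega(\tau) = \{x_0\}$.

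The content is in $(b) \Rightarrow (a)$. My plan is to assume $\Omega(\tau) = \{x_0\}$ and deduce nilpotency by first realizing $\Omega(\tau)$ as an algebraic (resp.\ algebraic group) subshift of finite type, and then invoking the stability statement (iv) of Theorem~\ref{t:limit-set-not-empty}. By part~(i) of that theorem, $\Omega(\tau)$ is a closed $G$-invariant subset of $A^G$; being a singleton, it is fixed by $G$, so $x_0$ is a constant configuration with some value $v_0 \in V(K)$, and by~(ii) we have $\tau(x_0) = x_0$.

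Under (H2), the singleton $\{v_0\}$ is a closed subvariety of the complete variety $V$, so with $D = \{1_G\}$ and $W = \{v_0\}$ one has $\{x_0\} = \Sigma(D, W(K))$, exhibiting $\Omega(\tau)$ as an algebraic subshift of finite type. Under (H3), the constant configuration $c_e$ with value the identity $e_V$ lies in the algebraic group subshift $\Sigma$ and is fixed by the group endomorphism $\tau$, so $c_e \in \Omega(\tau) = \{x_0\}$ forces $v_0 = e_V$; then $W = \{e_V\}$ is an algebraic subgroup of $V$ and $\Omega(\tau)$ is an algebraic group subshift of finite type. In either case, Theorem~\ref{t:limit-set-not-empty}(iv) now gives that $\tau$ is stable, so $\tau^n(\Sigma) = \tau^{n+1}(\Sigma) = \cdots = \Omega(\tau) = \{x_0\}$ for some $n \geq 1$, which means $\tau^n$ is the constant map with value $x_0$ and $\tau$ is nilpotent.

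The main obstacle is the case (H1), where neither completeness nor a group structure is available and Theorem~\ref{t:limit-set-not-empty}(iv) does not directly apply. My plan there is to exploit the uncountability of the algebraically closed field $K$: for each finite window $F \subset G$, the restrictions $\tau^n(\Sigma)\vert_F \subset V^F(K)$ form a descending sequence of $K$-points of constructible subsets of $V^F$ whose intersection is the single point $x_0\vert_F$. A descending-chain argument for constructible sets over an uncountable algebraically closed field (using that a strictly descending chain of nonempty constructibles could not have empty intersection with every complement of $\{x_0\vert_F\}$) should force this sequence to stabilize at $\{x_0\vert_F\}$ after finitely many steps. Finally, a diagonal argument over an exhaustion of $G$ by finite subsets containing the memory set of $\tau$, together with the uniform continuity and $G$-equivariance of $\tau$, should promote these finite-window stabilizations to a single $n$ for which $\tau^n$ is globally equal to the constant map $x_0$, completing the proof.
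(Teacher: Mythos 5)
Your implication (a)$\Rightarrow$(b) is fine, and your treatment of the cases (H2) and (H3) is correct and genuinely different from the paper's. The paper proves (b)$\Rightarrow$(a) uniformly in all three cases by constructing a ``punctured'' subsystem $(\Sigma^*_{ij})$ of the space-time inverse system and extracting, via Lemma~\ref{l:inverse-limit-seq-const} or Lemma~\ref{l:inverse-limit-closed-im}, a backward orbit whose endpoint lies in $\Omega(\tau)$ but differs from $x_0$ at $1_G$. You instead observe that a singleton $\{x_0\}$ with $x_0$ constant is a subshift of finite type and invoke the stability statement, Theorem~\ref{t:limit-set-not-empty}(iv), which rests on the Noetherianity theorem (Theorem~\ref{t:noetheiran-alg-sft}); combined with $\Omega(\tau)=\tau^{n}(\Sigma)$ for a stable $\tau$, this gives nilpotency. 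That shortcut is legitimate and cleaner where it applies, and you correctly note that part (iv) is only available under (H2) and (H3), so (H1) must be handled separately.

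The (H1) case, however, has a genuine gap. First, under (H1) Theorem~\ref{t:sofic-pro-constructible} only makes the restrictions $(\tau^n(\Sigma))_F$ \emph{countably-proconstructible}, not constructible, so the descending-chain argument for constructible sets you have in mind does not apply as stated. Second, and more seriously, your argument presupposes that $\bigcap_{n}(\tau^n(\Sigma))_F = \{x_0\vert_F\}$, i.e.\ that restriction to a finite window commutes with the countable intersection defining $\Omega(\tau)$. This is exactly the hard point: one must show that any local pattern occurring in every $\tau^n(\Sigma)$ is realized by an actual configuration of $\Omega(\tau)$, which amounts to the nonemptiness of an inverse limit of countably-proconstructible sets --- precisely the content of the paper's punctured space-time construction together with Lemma~\ref{l:inverse-limit-seq-const} --- and your sketch omits it. Even granting that the chain $(\tau^n(\Sigma))_F$ stabilizes, without this commutation you only learn that it stabilizes at some set containing $\{x_0\vert_F\}$, possibly strictly, which does not yield nilpotency. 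Finally, the concluding ``diagonal argument'' is both unnecessary and unconvincing: once $(\tau^n(\Sigma))_{\{1_G\}}=\{x_0(1_G)\}$ for a single $n$, the $G$-invariance of $\tau^n(\Sigma)$ already forces $\tau^n(\Sigma)=\{x_0\}$, whereas a diagonal over an exhaustion by windows $F$ with exponents $n_F\to\infty$ would not by itself produce a uniform exponent.
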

The analog of Theorem~\ref{t:char-nilpotent-alg-ca} for classical cellular automata follows from \cite[Theorem~3.5]{culik-limit-sets-1989}.
On the other hand, Theorem~\ref{t:char-nilpotent-alg-ca} can be seen as a generalization of   
an interesting and nontrivial property of endomorphisms of algebraic varieties (by taking $G = \{1_G\}$). 
\par
Both Theorem~\ref{t:limit-set-not-empty} and Theorem~\ref{t:char-nilpotent-alg-ca} become false if we remove the hypothesis that the ground field $K$ is algebraically closed
(see Example~\ref{ex:limit-set-not-closed}, Example~\ref{ex:alg-ca-empty-ls}, and Example~\ref{ex:alg-not-nil-ca-ls-single}).
To illustrate the significance of our results, note that 
for a group $G$ and a finite set $A$, the space $A^G$ is compact by Tychonoff's theorem.
Consequently, if $\Sigma \subset A^G$ is a closed subshift and $\tau \colon A^G \to A^G$ is a cellular automaton,
then $\tau^n(\Sigma)$ is closed in $A^G$ for every $n \geq 1$ and 
it follows that $\Omega(\tau)$ is a closed subshift of $A^G$. 
A standard compactness argument shows also that $\Omega(\tau) \not= \varnothing$ if $\Sigma \neq \varnothing$ 
(cf.~\cite{culik-limit-sets-1989}). 
When $A$ is infinite, $A^G$ is no longer compact
and, given a closed subshift $\Sigma \subset A^G$, the limit set of a cellular automaton $\tau \colon \Sigma \to \Sigma$ 
is, in general, no longer closed in $A^G$ 
(cf.~Example~\ref{ex:limit-set-not-closed}). 
Also, when $A$ is infinite, it may happen that $\Omega(\tau) = \varnothing$ 
while $\Sigma \neq \varnothing$ or even $\tau(\Omega(\tau)) \subsetneqq \Omega(\tau)$ (cf.~Proposition~\ref{p:ca-non-nil-limit-pt} and Example~\ref{ex:alg-ca-empty-ls}). 
\par  
A self-map $f \colon X \to X$ on a set $X$ is said to be \emph{pointwise nilpotent} 
if there exists  a point  $x_0 \in X$ such that for every $x \in X$, there exists an integer $n_0 \geq 1$ 
such that $f^n(x)=x_0$ for all $n \geq n_0$. 
\par 
Consider a group $G$ with the following property:   
for every finite alphabet $A$, 
any cellular automaton $\tau \colon A^G \to A^G$ with $\Omega(\tau)$ finite is nilpotent. 
Such a group  $G$ cannot be finite.
Indeed, for $G$ finite and   $A \coloneqq \{0,1\}$,
 the identity cellular automaton map $\tau \colon A^G \to A^G$
 has a finite limit set $\Omega(\tau) = A^G$
without being  nilpotent. 
By \cite[Corollary 4]{guillon-richard-2008} or
\cite{culik-limit-sets-1989}, we know that $G=\Z$ satisfies the above
property. 
In Theorem~\ref{t:finit-limit-set}, we show that actually it is satisfied by all infinite groups.  
\par  
More generally, we obtain the following various characterizations of nilpotent algebraic cellular automata. 

\begin{theorem}
\label{t:char-nilpotent-finite-alg-ca}
Let $G$ be an infinite group and le $V$ be an algebraic variety over a field $K$. 
Let $A=V(K)$ and let $\Sigma \subset A^G$ be a nonempty topologically mixing algebraic sofic subshift (e.g. $A^G$ for $A \not= \varnothing$).  
Let  $\tau \colon \Sigma \to \Sigma$ be an algebraic cellular automaton. 
Assume that one of the conditions $(\mathrm{H1})$, $(\mathrm{H2})$, $(\mathrm{H3})$ is satisfied. 
Then the following are equivalent: 
\begin{enumerate}[\rm (a)]
\item
$\tau$ is nilpotent;
\item 
$\tau$ is pointwise nilpotent; 
\item 
the limit set $\Omega(\tau)$ is finite. 
\end{enumerate}
If $G$ is finitely generated, then the above conditions are equivalent
to
\begin{enumerate}[{\rm (d)}]
\item 
each $x \in \Omega(\tau)$ is periodic and the set $\{x(1_G) \colon x \in \Omega(\tau)\}$ of alphabet values of $\Omega(\tau)$ is finite. 
\end{enumerate}
\end{theorem}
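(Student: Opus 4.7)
The plan is to establish the implications $(a) \Rightarrow (b)$, $(a) \Rightarrow (c)$, $(b) \Rightarrow (a)$, and $(c) \Rightarrow (a)$, yielding the equivalence of (a), (b), (c), and then to treat the equivalence with (d) under the assumption that $G$ is finitely generated. The directions $(a) \Rightarrow (b)$ and $(a) \Rightarrow (c)$ are immediate from the definition of nilpotency: if $\tau^{n_0} = c_{x_0}$ then $\tau^n(x) = x_0$ for every $n \geq n_0$ and every $x \in \Sigma$, and $\Omega(\tau) = \{x_0\}$ is a singleton.

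For $(b) \Rightarrow (a)$, I would first observe that the terminal point $x_0$ is necessarily a constant configuration: by $G$-equivariance of $\tau$, applying pointwise nilpotency to both $x$ and $gx$ forces $gx_0 = \tau^n(gx) = x_0$ for $n$ sufficiently large, and this holds for every $g \in G$. The closed sets $V_n \coloneqq (\tau^n)^{-1}(x_0) \subset \Sigma$ then cover $\Sigma$, and the Baire category theorem applied to $\Sigma$ (after reducing, if necessary, to a countable subgroup of $G$ containing the memory set of $\tau$) produces an $n$ such that $V_n$ has non-empty interior in $\Sigma$; equivalently, there exist a finite $F \subset G$ and $p \in A^F$ with $\varnothing \neq [p]_F \cap \Sigma \subset V_n$. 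Given any $y \in \Sigma$ and $h \in G$, topological mixing of $\Sigma$ yields $z \in \Sigma$ that agrees with $p$ on $F$ and whose restriction to some translate $gM_n$ of the $n$-th memory set $M_n$ of $\tau$, chosen so that $gM_n$ is disjoint from $F$, matches the corresponding restriction of a suitable shift of $y$. Evaluating the local rule of $\tau^n$ at the coordinate $g$ then gives $\tau^n(y)(h) = x_0(g) = x_0(h)$, using that $x_0$ is constant. Since $y$ and $h$ are arbitrary, $\tau^n \equiv x_0$, proving nilpotency.

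For $(c) \Rightarrow (a)$, by Theorem~\ref{t:char-nilpotent-alg-ca} it suffices to show $|\Omega(\tau)| = 1$. Since $\Omega(\tau)$ is finite (by $(c)$) and $G$-invariant (by Theorem~\ref{t:limit-set-not-empty}(i)), each of its elements is periodic and the alphabet values $A_0 \coloneqq \{x(1_G) : x \in \Omega(\tau)\}$ form a finite subset of $A$. Choosing a finite $D \subset G$ large enough to separate the finitely many configurations in $\Omega(\tau)$ realizes $\Omega(\tau) = \Sigma(D, W(K))$ with $W \subset V^D$ the zero-dimensional algebraic subvariety $\{x|_D : x \in \Omega(\tau)\}$, so $\Omega(\tau)$ is an algebraic subshift of finite type. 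Under $(\mathrm{H2})$ or $(\mathrm{H3})$, Theorem~\ref{t:limit-set-not-empty}(iv) then gives $\tau^n(\Sigma) = \Omega(\tau)$ for some $n \geq 1$; the continuous $G$-equivariant surjection $\tau^n \colon \Sigma \to \Omega(\tau) = \{y_1, \dots, y_k\}$ induces a partition of $\Sigma$ into the clopen $G$-invariant sets $(\tau^n)^{-1}(y_i)$; and topological mixing of $\Sigma$ implies topological transitivity, which forbids any non-trivial clopen $G$-invariant partition, so $k = 1$. Under $(\mathrm{H1})$ the stability input from Theorem~\ref{t:limit-set-not-empty}(iv) is not directly available, and I would instead combine the algebraic SFT structure of $\Omega(\tau)$ just established with the descending chain property for algebraic SFTs provided by Theorem~\ref{t:noetheiran-alg-sft} to recover stability of $\tau$, and then conclude as above. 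I expect this $(\mathrm{H1})$ case to be the main technical obstacle.

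For the equivalence with $(d)$ when $G$ is finitely generated, the direction $(c) \Rightarrow (d)$ is immediate from the $G$-invariance and finiteness of $\Omega(\tau)$. For $(d) \Rightarrow (c)$, the finiteness of $A_0$ embeds $\Omega(\tau)$ as a closed subshift of the compact space $A_0^G$. Writing $\Omega(\tau) = \bigcup_n X_n$ with $X_n \coloneqq \Omega(\tau) \cap \bigcup_{[G:H] \leq n} \Fix(H)$, each $X_n$ is closed in $\Omega(\tau)$ because a finitely generated group admits only finitely many subgroups of any given finite index (Marshall Hall), and the Baire category theorem supplies an $n$ with $X_n$ of non-empty interior; $G$-equivariance of the family $\{X_n\}$ then forces $X_n = \Omega(\tau)$, yielding a uniform bound on periods and hence the finiteness of $\Omega(\tau)$.
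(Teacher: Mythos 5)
Your proposal has genuine gaps in the two directions that carry the real content. The most serious is $(\mathrm{c})\Rightarrow(\mathrm{a})$ under $(\mathrm{H1})$: your argument hinges on stability ($\tau^{n}(\Sigma)=\Omega(\tau)$ for some $n$), which Theorem~\ref{t:limit-set-not-empty}(iv) supplies only under $(\mathrm{H2})$/$(\mathrm{H3})$, and your proposed substitute --- extracting stability from Theorem~\ref{t:noetheiran-alg-sft} --- does not go through, because the descending chain property there is obtained from Noetherianity applied to the local restrictions $(\tau^{n}(\Sigma))_{M^i}$, which are Zariski closed only in the complete-variety or algebraic-group settings; under $(\mathrm{H1})$ alone they are merely countably-proconstructible and satisfy no chain condition. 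You flag this yourself as ``the main technical obstacle,'' but it is precisely the case the theorem must cover. The paper avoids stability entirely: after reducing to finitely generated $G$ by restriction, it builds the modified space-time inverse system $\Sigma^{*}_{ij}$ (deleting the patterns whose value at $1_G$ lies in the finite set $T$ of alphabet values of $\Omega(\tau)$), shows via Lemmas~\ref{l:inverse-limit-seq-const}/\ref{l:inverse-limit-closed-im} that if all $\Sigma^{*}_{ij}$ were nonempty one could produce a limit configuration with value outside $T$, and concludes that some iterate $\tau^{j}$ has image in the \emph{finite}-alphabet shift $T^{G}$; the problem is then handed off to Theorem~\ref{t:finit-limit-set}. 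Two further errors in your $(\mathrm{H2})$/$(\mathrm{H3})$ argument: the sets $(\tau^{n})^{-1}(y_i)$ are clopen but \emph{not} $G$-invariant (the $G$-action permutes them according to the $G$-action on $\Omega(\tau)$), so transitivity does not immediately apply --- though this is repairable by applying mixing to the pair $U=V=(\tau^{n})^{-1}(y_1)$ and using that the stabilizer of $y_1$ has finite index in the infinite group $G$; and ``choosing $D$ large enough to separate the configurations'' does not realize a finite subshift as $\Sigma(D,\cdot)$ (take $\{0^{\Z},1^{\Z}\}\subset\{0,1\}^{\Z}$ and $D=\{0\}$); one needs $D=SD_0$ for a generating set $S$ and the induction of Lemma~\ref{l:claim1-finite}, which also means your route fails outright for non-finitely-generated $G$, where the paper instead uses $\Omega(\tau)=\Omega(\tau_H)^{G/H}$ with $G/H$ infinite.

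The second substantial gap is in $(\mathrm{d})\Rightarrow(\mathrm{c})$. Your Baire decomposition $\Omega(\tau)=\bigcup_n X_n$ with $X_n$ the configurations of orbit size at most $n$ is a reasonable start (each $X_n$ is closed by M.~Hall's finiteness of index-$\le n$ subgroups), but the step ``$G$-equivariance of the family $\{X_n\}$ then forces $X_n=\Omega(\tau)$'' is unjustified: $X_n$ is a closed $G$-invariant set with nonempty interior in $\Omega(\tau)$, and without topological transitivity of $\Omega(\tau)$ (which is not available unless $\tau$ is already known to be stable, so that $\Omega(\tau)$ is a factor of the mixing $\Sigma$) nothing forces such a set to be everything. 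The statement that a closed finite-alphabet subshift over a finitely generated group consisting only of periodic configurations is finite is a nontrivial theorem; the paper does not reprove it but cites \cite{ballier-phd-2009} and \cite{meyerovitch-salo-2019}. Your $(\mathrm{a})\Leftrightarrow(\mathrm{b})$ argument, by contrast, is essentially the paper's Proposition~\ref{p:carct-nilp-ca} (constancy of the terminal point from $G$-equivariance, reduction to a countable subgroup, Baire category, then mixing to propagate the cylinder condition) and is fine, modulo noting that closedness of $\Sigma$ in $A^G$ --- needed for the Baire argument --- comes from Corollary~\ref{c:sofic-closed}.
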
 
\par 
Note that for classical cellular automata the equivalence of (a) and (b) does not require neither the topological mixing nor the soficity
conditions on the subshift $\Sigma \subset A^G$  (this is a result going back to Kari, \cite{salo-nilpotent-2012}). 
We don't know whether or not, in our more general setting, the above-mentioned conditions can be dropped. 
For classical cellular automata, the equivalence (b) and (c) is given in Theorem \ref{t:finit-limit-set}. 
Note that, however, if the alphabet $A$ is infinite, for any group $G$ there exist non-nilpotent cellular automata whose
limit set is reduced to a single configuration and therefore is finite (cf.\ Proposition \ref{p:ca-non-nil-limit-pt}).

A linear version of the above theorem was given in \cite[Theorem 1.9 and Corollary 1.10]{JPAA}.
Our general strategy evolves around the analysis of the so called \emph{space-time inverse system} 
associated with a cellular automaton  (cf.~Section~\ref{s:space-time-system}). 
Such inverse systems and their variants as constructed in the proofs of the main theorems   
allow us to first conduct a local analysis   
of the dynamical system 
as in Theorem~\ref{t:sofic-pro-constructible} and Theorem~\ref{t:space-time-ls-alg}. 
We can then pass to the inverse limit, 
by means of the key technical algebro-geometric tools Lemma~\ref{l:pro-constructible} and Lemma~\ref{l:inverse-limit-closed-im},  
in order to obtain global properties 
such as a closed mapping property in Theorem~\ref{t:closed-image} and 
a characterization of algebraic subshifts of finite type in Theorem~\ref{t:noetheiran-alg-sft}.  
Variants of space-time inverse systems also allow us to reduce 
Theorem~\ref{t:char-nilpotent-finite-alg-ca} to the finite alphabet case studied in Theorem~\ref{t:finit-limit-set}. 
\par 
We remark that by a similar strategy, it is shown in \cite{phung-group-2020} that for a polycyclic-by-finite group $G$ 
and an algebraic group $V$ over an algebraically closed field $K$, 
all algebraic group sofic subshifts of $A^G$, where $A=V(K)$, are in fact algebraic group subshifts of finite type. 
See also \cite{phung-dcds} and \cite{phung-post-surjective} for some further applications of our techniques.
\par 
Most of our results for arbitrary groups are inferred from the results for finitely generated groups 
by the restriction technique applied to cellular automata over \emph{subshifts of sub-finite-type} 
(cf.~Section~\ref{s:sofic-shift-def}, Section~\ref{s:restriction-ls}, Section~\ref{s:restriction-cip}). 
\par
A detailed analysis is given in Example~\ref{ex:limit-set-not-closed} to provide a nontrivial counter-example 
to Theorem~\ref{t:limit-set-not-empty} and Theorem~\ref{t:closed-image}. 
Some generalizations of our results are given in Section~\ref{s:extension}.  
In the Appendix, we study pointwise nilpotency over infinite groups and arbitrary alphabets (cf.~Proposition~\ref{p:carct-nilp-ca}).  
 
% SECTION 2
\section{Preliminaries}

\subsection{Notation}
We use the symbols  $\Z$ for the integers, $\N$ for the non-negative integers, $\R$ for the reals, and $\C$ for the complex numbers.
\par
We write  $A^B$ for the set consisting of all maps from a set $B$ into a set $A$. 
Let $C \subset B$. If $x \in A^B$,  
we denote by $x\vert_C$ the restriction of $x$ to $C$, that is, the map $x\vert_C \colon C \to A$ given by $x\vert_C(c) = x(c)$ for all $c \in C$.
If $X \subset A^B$, we denote $X_C \coloneqq \{ x\vert_C \colon x \in X \} \subset A^C$. 
Let $E,F$ be subsets of a group $G$.
We write $E F \coloneqq \{g h : g \in E, h \in F\}$
and  define inductively $E^n$ for all $n \in \N$ by setting $E^0 \coloneqq  \{1_G\}$ and $E^{n + 1} \coloneqq  E^n E$. 
\par
Let $A$ be a set and let $E$ be a subset of a group $G$. 
Given  $x \in A^E$, we define $gx \in A^{gE}$ by $(gx)(h) \coloneqq x(g^{-1}h)$ for all $h \in g E$. 

\subsection{Algebraic varieties} 
Let $V$ be an algebraic variety over a field $K$, that is, a reduced $K$-scheme of finite type. 
We equip $V$ with its Zariski topology. 
Every subset $Z \subset V$ is equipped with the induced topology 
and we denote by $Z(K)$ the subset of $K$-points of $V$ lying in $Z$. 
Subvarieties of $V$ mean closed subsets with the reduced induced scheme structure.  
\begin{remark} 
\label{r:h1-h2} 
Every subvariety of a complete  (i.e. proper) algebraic variety is also complete.  
Images of morphisms of complete algebraic varieties are complete subvarieties  
(cf.~\cite[Section~3.3.2]{liu-book}). 
Likewise, kernels and images of homomorphisms of algebraic groups are also algebraic subgroups and are thus Zariski  closed  
(cf.~\cite[Proposition~1.41,~Theorem~5.80,~Theorem~5.81]{milne-group-book}).  
\end{remark}

Suppose now that the base field $K$ is algebraically closed. 
Then we can identify 
the set of $K$-points $A=V(K)$ of a $K$-algebraic variety $V$ 
with the set of closed points of $V$ (cf.~\cite[Proposition~6.4.2]{ega-1}). 
By a common abuse, we regard $A$ as an algebraic variety. 
Similarly, induced maps on closed points by morphisms of $K$-algebraic varieties 
are also called algebraic morphisms. 

\subsection{Chain-recurrent points}

\begin{proposition}
\label{p:nw-included-in-crec}
Let $X$ be a uniform space and let $f \colon X \to X$ be a continuous map.
Then $\NW(f) \subset \Crec(f)$. 
\end{proposition}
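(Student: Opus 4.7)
The plan is to fix a non-wandering point $x$ and an arbitrary entourage $E$ of $X$, and to build a closed $E$-pseudo-orbit from $x$ to itself by using a single actual orbit segment of $f$ as the interior of the chain. The key observation is that if the intermediate points are consecutive iterates of one point $y$ near $x$, then all the interior conditions $(f(x_i), x_{i+1}) \in E$ are automatic, the two entries coinciding, so that only the first jump at $x_0 = x$ and the last jump landing at $x_n = x$ need to be estimated.

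To set this up, I would first pick a symmetric entourage $E'$ with $E' \circ E' \subset E$, which exists by the axioms of a uniform space. Continuity of $f$ at $x$ yields a neighborhood $U_1$ of $x$ with $f(U_1) \subset E'[f(x)]$; I would then take $U \coloneqq U_1 \cap E'[x]$, which is still a neighborhood of $x$. The non-wandering hypothesis at $x$, applied to $U$, delivers an integer $n \geq 1$ and a point $y \in U$ with $f^n(y) \in U$.

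For $n \geq 2$, the chain $x_0 \coloneqq x$, $x_i \coloneqq f^i(y)$ for $1 \leq i \leq n-1$, and $x_n \coloneqq x$ satisfies all required conditions: the first and last jumps reduce to $(f(x), f(y)) \in E'$ and $(f^n(y), x) \in E'$ respectively, both in $E' \subset E$, using $y \in U_1$, $f^n(y) \in E'[x]$, and the symmetry of $E'$. The borderline case $n = 1$ is handled by the trivial chain $x_0 = x_1 = x$ after checking $(f(x), x) \in E' \circ E' \subset E$, which follows by composing $(f(x), f(y)) \in E'$ with $(f(y), x) \in E'$.

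I do not anticipate a substantive obstacle. The only care needed is to arrange the symmetry of $E'$ at the outset so that the memberships $y, f^n(y) \in E'[x]$ translate into entourage relations in the correct orientations. The argument is essentially the classical one for metric spaces, lifted verbatim to the uniform-space setting, and uses nothing beyond continuity of $f$ at the single point $x$.
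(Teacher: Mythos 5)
Your proposal is correct and follows essentially the same route as the paper's proof: choose a symmetric entourage $S$ with $S \circ S \subset E$, shrink a neighborhood of $x$ using continuity of $f$ at $x$, apply the non-wandering property to get $y$ and $n$, and use the orbit segment $f^i(y)$ as the interior of the chain, with the cases $n = 1$ and $n \geq 2$ treated exactly as you describe. The only cosmetic difference is that the paper packages the continuity estimate and the closeness to $x$ into a single symmetric entourage $T \subset S$ rather than intersecting two neighborhoods, which changes nothing of substance.
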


\begin{proof}
(cf.~\cite[Proposition~1.7]{shub-global-stability} in the metrizable case) 
Let $x \in \NW(f)$ and let $E$ be an entourage of $X$. 
Choose a symmetric entourage $S$ of $X$ such that $S\circ S\subset E$. 
By the continuity of $f$ at $x$, 
there exists a symmetric entourage $T$ of $X$ with $T \subset S$  
such that $(f(x),f(z)) \in S$ whenever $(x,z) \in T$. 
The set $U \subset X$, consisting of all $z \in X$ such that $(x,z) \in T$, is a neighborhood of $x$. 
Since $x$ is non-wandering,
there exist an integer $n \geq 1$ and a point $y \in U$ such that $f^n(y) \in U$.
Let us show that there is a sequence of points $x_0,x_1,\dots,x_n \in X$ such that
$x= x_0 = x_n$ and  $(f(x_i),x_{i + 1}) \in E$ for all $0 \leq i \leq n - 1$.
First observe  that since $y \in U$ we have $(x,y) \in T$ and therefore $(f(x), f(y)) \in S$.
If $n=1$, we can take $x_0 = x_1 = x$.
Indeed, we then have $f(y) = f^n(y) \in U$ and hence $(f(y),x) \in T \subset S$.
Therefore $(f(x_0),x_1) = (f(x),x) \in  S \circ S \subset E$.
If $n \geq 2$, 
we can take  the  points $x_0,x_1,\dots,x_n$
defined by $x = x_0 = x_n$ and $x_i  = f^i(y)$ for all $1 \leq i \leq n -1$.
Indeed, we then have $(f(x_0),x_1) = (f(x),f(y)) \in S \subset S \circ S \subset E$.
On the other hand,  we have
$(f(x_i),x_{i+1}) = (f^{i + 1}(y),f^{i + 1}(y)) \in E$ for all $1 \leq i \leq n -2$.
Finally, as $f^n(y) \in U$, we have  $(f(x_{n - 1}),x_n) = (f^n(y),x) \in T \subset S \subset S \circ S \subset E$.   
This shows that $x \in \Crec(f)$.  
\end{proof}

\begin{proposition}
\label{p:chain-recurrent}
Let $X$ be a Hausdorff uniform space and let $f \colon X \to X$ be a uniformly continuous map.
Suppose that $f^n(X)$ is closed in $X$ for all $n \in \N$. 
Then $\Crec(f) \subset \Omega(f)$.
\end{proposition}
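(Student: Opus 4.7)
The plan is to fix an arbitrary $n \geq 1$ and to show that every chain-recurrent point $x$ lies in $f^n(X)$. Since $X$ is Hausdorff and $f^n(X)$ is closed by hypothesis, it will suffice to prove that for each entourage $E$ of $X$ there exists $y \in X$ with $(f^n(y), x) \in E$; intersecting over all $n \geq 1$ then yields $x \in \Omega(f)$.

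To set up the entourage hierarchy, first I would pick a symmetric entourage $F$ of $X$ such that the $n$-fold composition $F^n$ is contained in $E$, by the standard halving construction in a uniform space (iterating $F \circ F \subset E$). Since each iterate $f^k$ is uniformly continuous (being a composition of uniformly continuous maps), I would then choose an entourage $H \subset F$ so that $(a,b) \in H$ implies $(f^k(a), f^k(b)) \in F$ for every $0 \leq k \leq n-1$; concretely, one takes $H$ to be the intersection of the $n$ preimage entourages furnished by uniform continuity of $f^0, f^1, \ldots, f^{n-1}$, together with $F$ itself.

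Now I would invoke chain-recurrence of $x$ with respect to $H$ to obtain a chain $x_0, x_1, \ldots, x_N$ with $x_0 = x_N = x$ and $(f(x_i), x_{i+1}) \in H$ for all $0 \leq i \leq N-1$; since the chain closes up at $x$, concatenating copies of it yields chains of any length that is a positive multiple of $N$, so I may assume $N \geq n$. Setting $y \coloneqq x_{N-n}$, I would consider the telescope of $n+1$ points
\[
f^n(y),\ f^{n-1}(x_{N-n+1}),\ f^{n-2}(x_{N-n+2}),\ \ldots,\ f(x_{N-1}),\ x_N = x.
\]
For each $j = 0, 1, \ldots, n-1$, applying $f^{n-j-1}$ to the relation $(f(x_{N-n+j}), x_{N-n+j+1}) \in H$ gives, by the defining property of $H$, that $(f^{n-j}(x_{N-n+j}), f^{n-j-1}(x_{N-n+j+1})) \in F$. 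Composing these $n$ consecutive $F$-relations along the telescope yields $(f^n(y), x) \in F^n \subset E$, as required.

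The principal subtlety is the quantitative coordination of the three entourages $H \subset F \subset E$: $F$ must be small enough that $F^n \subset E$, and $H$ must be simultaneously small enough that all of $f^0, \ldots, f^{n-1}$ send $H$ into $F$, before one unlocks chain-recurrence. Beyond this bookkeeping, the proof relies only on the Hausdorff property of $X$, the closedness of $f^n(X)$, and the elementary observation that a chain closing up at $x$ can be concatenated to arbitrary length.
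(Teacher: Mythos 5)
Your argument is correct, and it is in one respect cleaner than the paper's own proof. The core computation is the same: both proofs fix a target iterate, build the entourage hierarchy $H \subset F \subset E$ with $F\circ\cdots\circ F \subset E$ and $H$ small enough that $f^0,\dots,f^{n-1}$ all send $H$-close pairs to $F$-close pairs, and then telescope along the tail of a chain to place $x$ in the closure of $f^n(X)$, which is closed by hypothesis. Where you differ is in how you guarantee that the chain is long enough to telescope over. The paper introduces the function $\nu(E)$ recording the minimal length of an $E$-chain from $x$ to itself and splits into two cases: when $\nu$ is unbounded it runs the telescoping argument, and when $\nu$ is bounded (so that chains may be too short) it shows instead that $x$ is periodic, using the Hausdorff hypothesis to conclude $x = f^k(x)$ from $(f^k(x),x)\in E$ for every entourage $E$. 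Your observation that a chain closing up at $x$ can be concatenated with itself to produce chains of length $kN$ for every $k\geq 1$ makes the length issue disappear entirely and eliminates the case distinction; as a by-product your proof does not actually use the Hausdorff assumption, which the paper needs only in its bounded case. What the paper's route buys in exchange is the extra information that, when $\nu$ is bounded, the chain-recurrent point is genuinely periodic; your streamlined argument does not recover that, but it is not needed for the stated inclusion $\Crec(f)\subset\Omega(f)$.
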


\begin{proof}
Denote by $\EE$ the set of entourages of $X$.
Let $x \in \Crec(f)$. 
Given  $E \in \EE$, we  define $\nu(E)  \in \N \setminus \{0\}$ to be the least $n \in \N$ such that there exists
a sequence of points $x_0,x_1,\dots,x_n \in X$ satisfying that $x = x_0 = x_n$ and $(f(x_i), x_{i + 1}) \in E$ for all $0 \leq i \leq n - 1$.
Note that the map $\nu \colon \EE \to \N \setminus \{0\}$  is decreasing in the sense that 
if  $E,E' \in \EE$ and $E \subset E'$,
then $\nu(E') \leq \nu(E)$.
We distinguish two cases according to whether 
the map  $\nu$ is bounded or not.
\par
In the first case, let $k \coloneqq \max \nu$.
Take $E_0 \in \EE$ such that $\nu(E_0) = k$.
Let $E \in \EE$.  
Choose  a symmetric entourage  $S \in \EE$ such that
\[
\underbrace{S \circ S \circ \cdots \circ S}_{k \mbox{ \tiny times}} \subset E.
\]
Since $f$ is uniformly continuous, so are $f^{2}, \dots, f^{k}$.
Thus, we can find a symmetric entourage $T \subset E_0$  such that $(f^p(y),f^p(z)) \in S$ whenever $(y,z) \in T$ and $0 \leq p \leq k$.
By the maximality of $k$ and the fact that $T \subset E_0$, we have $\nu(T) = k$. 
Therefore, we can find a sequence of  points  $x_0, x_1, \ldots, x_k \in X$ such that 
$x =x_0=x_k$ and $(f(x_i), x_{i+1}) \in T$ for all
$0 \leq i \leq k - 1$.
Looking at the sequence of points
$f^k(x) = f^k(x_0), f^{k - 1}(x_{1}), f^{k - 2}(x_{2}), \ldots, f^{1}(x_{k - 1}), x_k = x$ 
and using the fact that, for all $0 \leq i \leq k -1$,
\[
(f^{k - i}(x_{i}), f^{k - i - 1}(x_{i + 1})) = (f^{k - i - 1}(f(x_{i })), f^{k - i - 1}(x_{i + 1}))) \in S
\]
since $(f(x_{i }), x_{i + 1}) \in T$, 
We see that
\[
( f^k(x),x) \in \underbrace{S \circ S \circ \cdots \circ S}_{k \mbox{ \tiny times}} \subset E.
\]
As the entourage $E \in \EE$ was arbitrary and $X$ is Hausdorff, it follows that $x = f^k(x)$.
Hence,   the point $x$ is periodic and therefore belongs to $\Omega(f)$.
\par
Consider now the second case, where $\nu$ is unbounded. 
Let $m \geq 1$ be an integer. 
We will   show that $x \in f^m(X)$.
Take $E_0 \in \EE$  so that $\nu(E_0) \geq m$.
Let $E \in \EE$.
Choose a symmetric entourage $S \in \EE$ such~that
\[
\underbrace{S \circ S \circ \cdots \circ S}_{m \mbox{ \tiny times}} \subset E.
\]
As in the first case, we can find a symmetric entourage $T \in \EE$  such that $T \subset E_0$ and 
$(f^p(y),f^p(z)) \in S$
whenever $(y,z) \in T$ and $0 \leq p \leq m$.
Observe that $n \coloneqq \nu(T) \geq \nu(E_0) \geq m$ since $T \subset E_0$.
By definition of $\nu$,
we can find a sequence of points 
$x_0, x_1, \ldots, x_{n} \in X$ such that $x =x_0=x_n$ and 
$(f(x_i), x_{i+1}) \in T$ for all $0 \leq i \leq n -1$.
Looking now at the sequence of points
$f^m(x_{n - m}),f^{m - 1}(x_{n - m + 1}), \ldots, f(x_{n - 1}), x_n = x$, 
and using the fact  that, for all $0 \leq i \leq m -1$, we have
\begin{align*}
& (f^{m - i}(x_{n - m + i}), f^{m - i - 1}(x_{n - m + i + 1})) \\
& = (f^{m - i - 1}(f(x_{n - m + i})), f^{m - i - 1}(x_{n - m + i + 1}))  \in S
\end{align*}
since $(f(x_{n - m + i})),x_{n - m + i + 1}) \in T$,
we see  that
\[
(f^m(x_{n - m}), x)   \in \underbrace{S \circ S \circ \cdots \circ S}_{m \mbox{ \tiny times}} \subset E.
\]
As the entourage $E \in \EE$ was arbitrary, 
it follows  that $x$ belongs to the closure of $f^m(X)$.
Since  $f^m(X)$ is closed in $X$ by our hypothesis,
we conclude that  $x \in f^m(X)$ for every $m \geq 1$. 
This shows that $x \in \Omega(f)$.
\end{proof}

Using the fact that the topology of any compact Hausdorff space is induced by a  unique uniform structure,
an immediate consequence of Proposition~\ref{p:chain-recurrent} is the following well known result (see e.g.~\cite[Ch.~6]{osipenko-2007}).

\begin{corollary}
\label{c:chain-recurrent}
Let $X$ be a compact Hausdorff  space and let $f \colon X \to X$ be a  continuous map.
Then $\Crec(f) \subset \Omega(f)$.
\qed
\end{corollary}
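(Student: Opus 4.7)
The plan is to deduce this as a direct application of Proposition~\ref{p:chain-recurrent}. That proposition already yields the desired inclusion $\Crec(f) \subset \Omega(f)$ under three hypotheses on the pair $(X, f)$: that $X$ is a Hausdorff uniform space, that $f$ is uniformly continuous, and that every iterate image $f^n(X)$ is closed in $X$. My task reduces to verifying each of these three hypotheses in the compact Hausdorff setting.

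First I would invoke the standard fact that any compact Hausdorff topological space $X$ admits a unique compatible uniform structure, so there is no ambiguity in endowing $X$ with a uniform structure in order to speak of entourages, chain-recurrence, and uniform continuity; in particular $X$ is automatically a Hausdorff uniform space. Second, I would recall that a continuous map from a compact Hausdorff space to itself is automatically uniformly continuous with respect to this unique uniform structure, so the uniform continuity hypothesis on $f$ in Proposition~\ref{p:chain-recurrent} is satisfied. Third, I would use that $X$ is compact and $f$ is continuous, so by induction each $f^n$ is continuous, hence $f^n(X)$ is the continuous image of a compact set and therefore compact; a compact subset of a Hausdorff space is closed, so $f^n(X)$ is closed in $X$ for every $n \in \N$.

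With all three hypotheses verified, Proposition~\ref{p:chain-recurrent} applies verbatim and delivers the conclusion $\Crec(f) \subset \Omega(f)$. There is no genuine obstacle here: the content of the corollary is entirely packaged in the previous proposition, and the only work is the bookkeeping of compact-Hausdorff folklore (unique uniformity, automatic uniform continuity, closedness of images). I expect the written proof to be a single short paragraph, essentially the three verifications above followed by a citation of Proposition~\ref{p:chain-recurrent}.
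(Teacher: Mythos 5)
Your proposal is correct and matches the paper's argument: the paper likewise deduces the corollary directly from Proposition~\ref{p:chain-recurrent} by invoking the unique compatible uniform structure on a compact Hausdorff space, leaving the automatic uniform continuity of $f$ and the closedness of the images $f^n(X)$ (compact, hence closed) implicit. Your write-up simply makes those routine verifications explicit.
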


\subsection{Subshifts of sub-finite-type}  
\label{s:sofic-shift-def}
Let $G$ be a group and let $A$ be a set. 
A subshift $\Sigma \subset A^G$ is called a \emph{subshift of sub-finite-type} if it is a factor of a subshift of finite type 
(cf.~\eqref{e:sft}), namely, there exist a set $B$, a cellular automaton $\tau' \colon B^G \to A^G$ and  
a subshift of finite type $\Sigma' \subset B^G$ such that $\Sigma= \tau'(\Sigma')$. 
Note that we do not require  $\Sigma$ to be closed in $A^G$. 
In the sequel, every finite subset $D$ of $G$ containing a defining memory set of $\Sigma'$ as well as a memory set of $\tau'$ 
 will be called a \emph{memory set} of the subshift of sub-finite-type $\Sigma$. 
 The existence of such a memory set will be necessary for the restriction technique 
 (cf.~Section~\ref{s:restriction-ls} and Section~\ref{s:restriction-cip}) when the group 
 $G$ is not finitely generated. 
 \begin{example} 
If $G$ is a group,  $V$ is an algebraic variety over a field $K$, and $A \coloneqq V(K)$,
then it immediately follows from Definition~\ref{d:alg-sofic-shift} in the Introduction that
every algebraic sofic subshift $\Sigma \subset A^G$ is a subshift of sub-finite-type of $A^G$. 
\end{example}
In the rest of the paper, 
a \emph{memory set} of an algebraic sofic subshift $\Sigma$ will mean any memory set 
of $\Sigma$ regarded as a subshift of sub-finite-type. 
\begin{example} 
Let $A$ be a set and let $\Gamma$ be an $A$-labelled directed graph.
This means that $\Gamma$ is a quintuple
$\Gamma  = (V,E,\alpha,\omega,\lambda)$, where $V,E$ are sets, and $\alpha,\omega \colon E \to V$, $\lambda \colon E \to A$  are maps.
The elements of $V$ are called the \emph{vertices} of $\Gamma$,
those of $E$ are called its \emph{edges},
and, for every edge $e \in E$, the vertex $\alpha(e)$ (resp.~$\omega(e))$ 
is called the \emph{initial} (resp.~\emph{terminal}) vertex of $e$
while $\lambda(e)$ is called its \emph{label}.
The \emph{label} of a configuration $x \in E^{\Z}$   is the configuration $\Lambda(x)  \in A^{\Z}$ 
defined by $\Lambda(x)(n) = \lambda(x(n))$ for all $n \in \Z$. 
Observe that $\Lambda \colon E^{\Z} \to A^{\Z}$ is a cellular automaton 
admitting $M \coloneqq \{0\} \subset \Z$ as a memory set 
 and $\lambda \colon E^M = E \to A$ as the associated local defining map. 
 An element  $x \in E^{\Z}$ is called a \emph{path} of $\Gamma$
if it satisfies $\omega(x(n)) = \alpha(x(n+1))$ for all $n \in \Z$. 
Clearly, the subset $\Sigma' \subset E^{\Z}$ consisting of all paths of $\Gamma$
is the subshift of finite type $\Sigma(D,P)$ of $E^{\Z}$ where $D\coloneqq \{0,1\} \subset \Z$
and $P \coloneqq \{p \in E^D: \omega(p(0)) = \alpha(p(1))\}$.
One says that $\Sigma'$ is the \emph{Markov shift} associated with the unlabelled graph $(V,E,\alpha,\omega)$
(cf.~\cite[Ch.~7]{kitchens-book}).
We deduce that  $\Sigma \coloneqq \Lambda(\Sigma')$ is a subshift of sub-finite-type of $A^\Z$. 
Conversely, it can be shown that every subshift of sub-finite-type of $A^{\Z}$ can be obtained, 
up to topological conjugacy, as the set of labels of the paths of a suitably chosen $A$-labelled graph.
The proof of this last result is, mutatis mutandis,
the one used in the classical setting
for showing that every sofic finite alphabet subshift over $\Z$
can be presented by a finite labelled graph (see e.g.~\cite[Theorem~3.2.1]{lind-marcus}). 
\end{example}

The following result says that the notion of subshifts of sub-finite-type 
is only interesting when $G$ is not finitely generated. 

\begin{proposition} 
\label{p:shift-general-alphabet} 
Let $G$ be a finitely generated group and let $A$ be a set. 
Then every subshift $\Sigma \subset A^G$ is a subshift of sub-finite-type.   
\end{proposition}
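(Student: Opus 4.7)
The plan is to realize $\Sigma$ itself, regarded as an abstract alphabet, as the coordinate set of a carefully synchronized subshift of finite type that maps onto $\Sigma$. First fix a finite symmetric generating set $S \subset G$ (available by the finite generation hypothesis) and set $B \coloneqq \Sigma$, so each symbol of $B$ is an entire configuration in $A^G$. Define $\tau' \colon B^G \to A^G$ by $\tau'(y)(g) \coloneqq y(g)(1_G)$; this is a cellular automaton with memory set $\{1_G\}$ and local defining map $b \mapsto b(1_G)$.

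Next I would take $D \coloneqq S \cup \{1_G\}$ and
\[
P \coloneqq \{(b_d)_{d \in D} \in B^D \colon b_s = s^{-1} b_{1_G} \text{ in } \Sigma \text{ for all } s \in S\},
\]
where the $G$-action on $B = \Sigma$ is well defined because $\Sigma$ is a $G$-invariant subset of $A^G$. Set $\Sigma' \coloneqq \Sigma(D,P) \subset B^G$, which is a subshift of finite type by~\eqref{e:sft}. Unwinding the definition of $\Sigma(D,P)$ via the identity $(g^{-1}y)(d) = y(gd)$, membership $y \in \Sigma'$ is exactly the family of local compatibility relations $y(gs) = s^{-1} y(g)$ for all $g \in G$ and all $s \in S$.

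The crux of the argument — and the only place the finite generation of $G$ is used — is the propagation step: by induction on the length of $g$ as a word in the symmetric generating set $S$, the local relations above force the global identity $y(g) = g^{-1} y(1_G)$ for every $g \in G$. Once this is established, the equality $\tau'(\Sigma') = \Sigma$ drops out immediately. Indeed, for $y \in \Sigma'$ one has $\tau'(y) = y(1_G) \in B = \Sigma$; conversely, given any $x \in \Sigma$, the configuration $y_x \in B^G$ defined by $y_x(g) \coloneqq g^{-1} x$ takes values in $\Sigma$ (by $G$-invariance of $\Sigma$) and satisfies the defining local relation of $\Sigma'$ because $(gs)^{-1} x = s^{-1}(g^{-1} x)$, so $y_x \in \Sigma'$ with $\tau'(y_x) = x$. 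I do not anticipate any serious obstacle: the construction is elementary once one accepts the conceptual trick of letting a single cell of $B^G$ carry an entire configuration, which is what allows the finitely many generator-indexed constraints in $P$ to enforce full global consistency.
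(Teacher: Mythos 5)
Your construction is correct and coincides with the paper's own proof of Proposition~\ref{p:shift-general-alphabet}: the alphabet $B=\Sigma$, the finite-type constraints $y(gs)=s^{-1}y(g)$ forcing $y(g)=g^{-1}y(1_G)$ by induction on word length, and the evaluation cellular automaton with memory set $\{1_G\}$ are all exactly the steps used there. No gaps; the only differences are notational (the paper folds $1_G$ and symmetry into the generating set $D$ from the start).
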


\begin{proof}
Let $D$ be a finite generating subset of $G$ such that $1_G \in D$ and $D=D^{-1}$. 
Let $\Sigma \subset A^G$ be a subshift. 
Let $B \coloneqq \Sigma$ and define 
$P \coloneqq \{ y\in B^D \colon y(g) = g^{-1}(y(1_G)) \mbox{ for all } g \in D \}$. 
Consider the subshift of finite type $\Sigma' \coloneqq \Sigma(D,P)$ of $B^G$. 
Since $D=D^{-1}$ generates $G$ and contains $1_G$,  
the map $\mathfrak{X} \mapsto \mathfrak{X}(1_G)$ is a bijection from  $\Sigma'$ onto  $\Sigma$. 
Indeed, for every $g \in G$ and $\mathfrak{X} \in \Sigma'$, by writing $g= s_1 \dots s_n$ for 
some $s_1, \dots, s_n \in D$, we find that:   
\[
\mathfrak{X}(g) = \mathfrak{X}(s_1 \dots s_n) 
= s_1^{-1} \mathfrak{X}(s_2 \dots s_n) = \dots 
= s_n^{-1} \dots s_1^{-1} (\mathfrak{X}(1_G)) = g^{-1} \mathfrak{X}(1_G). 
\]
Let $\tau \colon B^G \to A^G$ be the cellular automaton
with memory set $\{1_G\}$ and associated local defining map $\mu \colon B \to A$ given by $x \mapsto x(1_G)$. 
In other words, $\tau(\mathfrak{X})(g) = (\mathfrak{X}(g))(1_G)$ for every $\mathfrak{X} \in B^G$
and $g \in G$. Hence, for every $\mathfrak{X} \in \Sigma'$, we have 
$\tau(\mathfrak{X}) = \mathfrak{X}(1_G)$ since for all $g \in G$,  
\[
\tau(\mathfrak{X}) (g) =(\mathfrak{X}(g))(1_G) = (g^{-1} (\mathfrak{X}(1_G)) )(1_G) 
= (\mathfrak{X}(1_G))(g). 
\] 
As $\mathfrak{X}(1_G) \in B= \Sigma$ is arbitrary, we conclude that $\Sigma=\tau(\Sigma')$ 
is a subshift of sub-finite-type.  
\end{proof}

\subsection{Restriction of cellular automata and of subshifts of sub-finite-type}
\label{s:restriction-ls} 
Let $G$ be a group and let $A$ be a set. 
Let $\Sigma \subset A^G$ be a subshift of sub-finite-type. 
Hence, there exist a set $B$, a cellular automaton $\tau' \colon B^G \to A^G$ and  
a subshift of finite type $\Sigma' \subset B^G$ such that $\Sigma= \tau'(\Sigma')$. 
Let $D \subset G$ be a finite subset 
such that $D$ is a defining memory set of $\Sigma'$ as well as a memory set of $\tau'$. 
Let  $H\subset G$ be  a subgroup of $G$ containing $D$. 
Denote by $G/H \coloneqq  \{gH \colon g \in G\}$ the
set of all right cosets of $H$ in $G$.
As the right cosets of $H$ in $G$ form a partition of $G$,
we have natural factorizations 
\[
A^G = \prod_{c \in G/H} A^c, \quad B^G = \prod_{c \in G/H} B^c
\]
in which each $x \in A^G$ (resp.~$x \in B^G$) 
is identified with  $ (x\vert_c)_{c \in G/H} \in \prod_{c \in G/H} A^c$ 
(resp.~$(x\vert_c)_{c \in G/H} \in \prod_{c \in G/H} B^c$). 
Since  $g D \subset gH$ for every $g \in G$,
the above factorization of  $B^G$ induces a factorization 
\[
\Sigma' = \prod_{c \in G/H} \Sigma'_c,
\]
where $\Sigma'_c = \{x\vert_c \colon x \in \Sigma'\}$ for all $c \in G/H$. 
Likewise, for each $c \in G/H$, let $\Sigma_c = \{x\vert_c \colon  x \in \Sigma\}$. 

\begin{lemma}
\label{l:factorization-sigma} 
The factorization $A^G = \prod_{c \in G/H} A^c$ induces
a factorization 
\[
\Sigma = \prod_{c \in G/H} \Sigma_c. 
\]
\end{lemma}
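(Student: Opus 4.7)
The plan is to establish the two inclusions between $\Sigma$ and $\prod_{c \in G/H} \Sigma_c$, with the nontrivial one being the reverse inclusion built via a gluing argument that exploits both the factorization already established for $\Sigma'$ and the fact that the memory set $D$ sits inside $H$.

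The easy inclusion $\Sigma \subset \prod_{c \in G/H} \Sigma_c$ is immediate from the definition of $\Sigma_c$: any $x \in \Sigma$ decomposes as $(x|_c)_{c \in G/H}$, with each coordinate lying in $\Sigma_c$ by construction. For the reverse inclusion, the plan is as follows. Fix a family $(y_c)_{c \in G/H}$ with $y_c \in \Sigma_c$ for each $c$. By definition of $\Sigma_c$, for each $c$ I can pick an element $\tilde{y}_c \in \Sigma$ whose restriction to $c$ equals $y_c$. Since $\Sigma = \tau'(\Sigma')$, there exists $X_c \in \Sigma'$ with $\tau'(X_c) = \tilde{y}_c$. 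Now I invoke the factorization $\Sigma' = \prod_{c \in G/H} \Sigma'_c$ (already recorded in the excerpt) to assemble these pieces: define $X \in B^G$ coordinate-wise by setting $X|_c := X_c|_c$ for each $c \in G/H$. The factorization of $\Sigma'$ guarantees that this $X$ belongs to $\Sigma'$.

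It remains to verify that $\tau'(X)|_c = y_c$ for every $c \in G/H$, which will give $\tau'(X) \in \Sigma$ with the desired restrictions. For $g \in c$, we have $\tau'(X)(g) = \mu((g^{-1}X)|_D)$, where $\mu$ is the local defining map of $\tau'$ with memory set $D$. The crucial observation is that $D \subset H$ implies $gD \subset gH = c$, so $\tau'(X)(g)$ depends only on $X|_{gD}$, and $X|_{gD} = X_c|_{gD}$ by construction of $X$. Hence $\tau'(X)(g) = \tau'(X_c)(g) = \tilde{y}_c(g) = y_c(g)$, so $\tau'(X)|_c = y_c$, as required.

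The main obstacle, if any, is purely a bookkeeping one: being careful about the compatibility between the product decomposition of $\Sigma'$ and the locality of $\tau'$. The single conceptual step is the observation $gD \subset gH = c$ for $g \in c$, which is exactly why the hypothesis $D \subset H$ was imposed. Everything else is formal unpacking of the definitions of subshift of sub-finite-type, cellular automaton, and the coset decomposition of $A^G$.
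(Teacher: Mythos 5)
Your proof is correct and follows essentially the same route as the paper: the paper packages the locality observation $gD \subset gH = c$ into the product decomposition $\tau' = \prod_{c \in G/H} \tau'_c$ and then computes $\Sigma = \tau'\bigl(\prod_c \Sigma'_c\bigr) = \prod_c \tau'_c(\Sigma'_c) = \prod_c \Sigma_c$, which is exactly your gluing argument written coordinate-wise. Your explicit verification that $\tau'(X)\vert_c = y_c$ is just the element-level unpacking of that same decomposition, so the two proofs coincide in substance.
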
 
\begin{proof}
Since $H$ contains a memory set of $\tau'$, 
we have  $\tau' = \prod_{c \in G/H} \tau'_c$,
where $\tau'_c \colon B^c \to A^c$ is given by 
$\tau'_c(y) \coloneqq \tau'(x)\vert_c$ for all $y \in B^c$,
where $x \in B^G$ is any configuration extending $y$. 
We deduce that 
$\Sigma_c = (\tau'( \Sigma'))_c = \tau'_c(\Sigma'_c)$ for every $c \in G/H$. 
 Hence, 
\begin{align*}
\Sigma = \tau'(\Sigma') = \tau' \left( \prod_{c \in G/H} \Sigma'_c \right) 
= \prod_{c \in G/H} \tau'_c(\Sigma'_c) = \prod_{c \in G/H} \Sigma_c. 
\end{align*} 
\end{proof}
Let $T \subset G$ be a complete set of representatives for the right
cosets of $H$ in $G$ such that $1_G \in T$.
Then, for each $c \in G/H$, we have a uniform homeomorphism 
$\phi_c \colon \Sigma_c \to \Sigma_H$ given by
$\phi_c(y)(h) = y(gh)$ for all $y \in \Sigma_c$, where $g \in T$
represents $c$.
In particular, $\Sigma \neq \varnothing$ if and only if $\Sigma_H \neq
\varnothing$. 
\par 
Now suppose in addition that $\tau \colon \Sigma \to \Sigma$ is a cellular automaton
which admits a memory set contained in $H$.
Then we have  $\tau = \prod_{c \in G/H} \tau_c$,
where $\tau_c \colon \Sigma_c \to \Sigma_c$ is defined by setting
$\tau_c(y) \coloneqq \tau(x)\vert_c$ for all $y \in \Sigma_c$,
where $x \in \Sigma$ is any configuration extending $y$. 
Note that for each $c \in G/ H$,
the maps $\tau_c$ and $\tau_H$ are conjugate by $\phi_c$, i.e.,  we have 
$\tau_c = \phi_c^{-1} \circ \tau_H \circ \phi_c$.
This allows us to identify the action of  $\tau_c$ on $\Sigma_c$
with that of  the restriction cellular
automaton $\tau_H$  on $\Sigma_H$.

\begin{lemma}
\label{l:restriction-ls}
The following hold:
\begin{enumerate} [\rm (i)]
\item
$\Omega(\tau) = \Omega(\tau_H)^{G/H}$;
\item
$\tau$ is nilpotent if and only if $\tau_H$ is nilpotent.
\end{enumerate}
\end{lemma}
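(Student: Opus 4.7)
The proof plan rests on exploiting the product decomposition $\tau = \prod_{c \in G/H} \tau_c$ established just before the lemma, together with the conjugation $\tau_c = \phi_c^{-1} \circ \tau_H \circ \phi_c$.

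For part (i), I would first observe that iterating a product of maps yields the product of the iterates, so $\tau^n = \prod_{c \in G/H} \tau_c^n$ for every $n \geq 1$. Combined with the factorization $\Sigma = \prod_{c \in G/H} \Sigma_c$ from Lemma~\ref{l:factorization-sigma}, this gives $\tau^n(\Sigma) = \prod_{c \in G/H} \tau_c^n(\Sigma_c)$. Since intersections commute with direct products of sets, taking the intersection over $n \geq 1$ yields $\Omega(\tau) = \prod_{c \in G/H} \Omega(\tau_c)$. Next, conjugation by the uniform homeomorphism $\phi_c$ sends $\tau_c^n(\Sigma_c)$ onto $\tau_H^n(\Sigma_H)$, so $\phi_c(\Omega(\tau_c)) = \Omega(\tau_H)$ for every $c \in G/H$. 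Assembling these identifications gives $\Omega(\tau) = \Omega(\tau_H)^{G/H}$ under the natural identification $\Sigma = \prod_{c \in G/H} \Sigma_c \cong \Sigma_H^{G/H}$ (with $\phi_{H} = \mathrm{id}$).

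For part (ii), I plan to argue both directions via the same product/conjugation dictionary. If $\tau_H$ is nilpotent, say $\tau_H^{n_0} \equiv y_0$ for some $y_0 \in \Sigma_H$, then each conjugate $\tau_c^{n_0} \equiv \phi_c^{-1}(y_0)$ is constant, so $\tau^{n_0} = \prod_c \tau_c^{n_0}$ is constant with terminal point $x_0 \in \Sigma$ determined by $x_0\vert_c = \phi_c^{-1}(y_0)$. Conversely, if $\tau^{n_0}$ is constant, then each factor $\tau_c^{n_0}$ must be constant (projections of a constant product map are constant), so in particular $\tau_H^{n_0}$ is constant, i.e., $\tau_H$ is nilpotent.

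I don't expect any real obstacle: the entire argument is formal manipulation of the product structure. The only minor care needed is to make sure the finite subset $D \subset G$ chosen earlier simultaneously serves as a defining memory set for $\Sigma'$, a memory set of $\tau'$, and (in the hypothesis of the lemma) lies inside a memory set of $\tau$ contained in $H$; this was already built into the setup of Section~\ref{s:restriction-ls}, so one just cites the factorizations of $\Sigma$ and of $\tau$ established there and proceeds as above.
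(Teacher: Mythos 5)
Your argument is correct and is essentially the paper's proof: the paper simply observes that $x \mapsto (\phi_c(x\vert_c))_{c \in G/H}$ gives a bijection $\Omega(\tau) \to \Omega(\tau_H)^{G/H}$ and declares (ii) clear from the preceding discussion, which is exactly the product/conjugation bookkeeping you spell out. Your version just makes explicit the steps ($\tau^n = \prod_c \tau_c^n$, intersections commuting with products, $\phi_c(\Omega(\tau_c)) = \Omega(\tau_H)$) that the paper leaves implicit.
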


\begin{proof}
Observe that the map $x \mapsto (\phi_c(x\vert_c))_{c \in G/H}$ yields a bijection
$\Omega(\tau) \to \prod_{c \in G/H} \Omega(\tau_H) =
\Omega(\tau_H)^{G/H}$, and this proves (i).  The point (ii) is clear by
the above discussion.
\end{proof}

\subsection{Restriction and the closed image property} 
\label{s:restriction-cip} 
Let $G$ be a group and let $A, B$ be sets. 
Let $\Sigma \subset A^G$ be a subshift of sub-finite-type  
and let now $\tau \colon A^G \to B^G$ be a cellular automaton
whose source and domain are the full shifts $A^G$ and $B^G$ respectively.  
Let $H\subset G$ be a subgroup of $G$ containing a memory set of $\Sigma$
and a memory set of $\tau$.  
As in Section~\ref{s:restriction-ls}, we have the factorizations
$\Sigma = \prod_{c \in G/H} \Sigma_c$ 
(cf.~Lemma~\ref{l:factorization-sigma})
and  
$\tau = \prod_{c \in G/H} \tau_c$,
with $\tau_c \colon A^c \to B^c$ defined by 
$\tau_c(y) \coloneqq \tau(x)\vert_c$ for all $y \in A^c$,
where $x \in A^G$ is any configuration extending $y$. 

\begin{lemma}
\label{l:restriction-cip}
The set $\tau(\Sigma)$ is closed in $B^G$
if and only if $\tau_H(\Sigma_H)$ is closed in $B^H$. 
\end{lemma}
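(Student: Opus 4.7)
The plan is to show that under the given product decompositions, $\tau(\Sigma)$ factors as a product of copies of $\tau_H(\Sigma_H)$ and then to invoke a general fact about closedness of products.

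First, using $\Sigma = \prod_{c \in G/H} \Sigma_c$ from Lemma~\ref{l:factorization-sigma} and $\tau = \prod_{c \in G/H} \tau_c$, I would observe the equality
\[
\tau(\Sigma) = \prod_{c \in G/H} \tau_c(\Sigma_c) \subset \prod_{c \in G/H} B^c = B^G.
\]
Moreover, for each $c \in G/H$, the homeomorphism $\phi_c \colon A^c \to A^H$ (defined exactly as in Section~\ref{s:restriction-ls}, extended from $\Sigma_c$ to $A^c$) and its analog for $B$ are uniform homeomorphisms satisfying $\phi_c(\Sigma_c) = \Sigma_H$ and the conjugacy $\tau_c = \phi_c^{-1} \circ \tau_H \circ \phi_c$. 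Hence $\tau_c(\Sigma_c) = \phi_c^{-1}(\tau_H(\Sigma_H))$, so $\tau_c(\Sigma_c)$ is closed in $B^c$ if and only if $\tau_H(\Sigma_H)$ is closed in $B^H$.

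Next, I would dispose of the degenerate case $\Sigma = \varnothing$: then $\Sigma_H = \varnothing$, both $\tau(\Sigma)$ and $\tau_H(\Sigma_H)$ are empty, hence closed, and there is nothing to prove. Assume therefore that $\Sigma \neq \varnothing$, so that $\Sigma_c \neq \varnothing$ and thus $\tau_c(\Sigma_c) \neq \varnothing$ for every $c \in G/H$. I would then apply the following standard fact about the product topology: given a family of topological spaces $(Y_c)_{c}$ and non-empty subsets $X_c \subset Y_c$, the product $\prod_c X_c$ is closed in $\prod_c Y_c$ if and only if each $X_c$ is closed in $Y_c$. The ``if'' direction is immediate from the definition of the product topology, and the ``only if'' direction follows by fixing any basepoint $(a_d)_{d \neq c} \in \prod_{d \neq c} X_d$ and noting that $X_c$ is the preimage of $\prod_d X_d$ under the continuous slice embedding $Y_c \hookrightarrow \prod_d Y_d$, $y \mapsto (y, (a_d)_{d \neq c})$.

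Combining these two ingredients, $\tau(\Sigma) = \prod_c \tau_c(\Sigma_c)$ is closed in $B^G$ if and only if each $\tau_c(\Sigma_c)$ is closed in $B^c$, if and only if $\tau_H(\Sigma_H)$ is closed in $B^H$. There is no real obstacle; the only delicate point is keeping track of the conjugation by $\phi_c$ and ensuring that the product topology on $B^G = \prod_{c} B^c$ really is the prodiscrete topology grouped by cosets, which follows because the prodiscrete topology on $B^G$ is the product of the discrete topologies on each factor $B$, which in turn agrees with the product over $c \in G/H$ of the prodiscrete topologies on the $B^c$.
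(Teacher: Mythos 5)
Your proposal is correct and follows essentially the same route as the paper: factor $\tau(\Sigma)=\prod_{c\in G/H}\tau_c(\Sigma_c)$, transport each factor to $\tau_H(\Sigma_H)$ via the uniform homeomorphisms coming from a choice of coset representatives, and use that a product of closed subsets is closed. You are in fact slightly more careful than the paper in spelling out the slice-embedding argument for the forward direction and in disposing of the case $\Sigma=\varnothing$ (which the paper dismisses as ``immediate''); the only cosmetic slip is writing the conjugacy as $\tau_c=\phi_c^{-1}\circ\tau_H\circ\phi_c$ where it should involve both the source-side and target-side homeomorphisms, i.e.\ $\tau_c=\psi_c^{-1}\circ\tau_H\circ\phi_c$, which you implicitly acknowledge.
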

\begin{proof} 
We have  
$\tau(\Sigma)= \prod_{c \in G/H} \tau_c(\Sigma_c)$. 
It is immediate that
$\tau_H(\Sigma_H)$ is closed in $B^H$ if $\tau(\Sigma)$ is closed in $B^G$. 
For the converse implication, we have for every $c \in G/H$ 
a uniform homeomorphism 
$\psi_c \colon B^c \to B^H$ by fixing a complete set containing $1_G$ 
of representatives for the right cosets of $H$ in $G$  (cf.~Section~\ref{s:restriction-ls}). 
Thus, if $\tau_H(\Sigma_H)$ is closed then so is 
$\tau_c(\Sigma_c) = \psi_c^{-1}(\tau_H(\Sigma_H))$. 
Consequently, $\tau(\Sigma)$ is closed in $B^G$ whenever $\tau_H(\Sigma_H)$ is closed in $B^H$ 
since the product of closed subspaces is closed in the product topology. 
\end{proof}

% SECTION 3
\section{Inverse limits of countably pro-constructible sets} 
\label{s:countably-pro-constructible}
Let $I$ be a directed set, i.e., a partially ordered set in which every pair of elements admits an upper bound.
An \emph{inverse system} of sets  \emph{indexed} by  $I$ consists of the following data:
(1) a set $Z_i$ for each $i \in I$;
(2)  a
\emph{transition map}
$\varphi_{ij} \colon Z_j \to Z_i$
for all $i,j \in I$ such that $i \prec j$.
Furthermore, the transition maps must satisfy  the following conditions:
\begin{align*}
 \varphi_{ii} &= \Id_{Z_i} \text{ (the identity map on $Z_i$) for all } i \in  I, \\[4pt]
 \varphi_{ij} \circ \varphi_{jk}  &= \varphi_{ik}  \text{ for all $i,j,k \in I$ such that } i \prec j \prec k. %\\
\end{align*}
One then speaks of the inverse system $(Z_i,\varphi_{ij})$, or simply  $(Z_i)$ if
the index set and the transition maps  are clear from the context.
 \par
The \emph{inverse limit} of an inverse system $(Z_i,\varphi_{i j})$ is the subset
\[
\varprojlim_{i \in I} (Z_i,\varphi_{i j}) =  \varprojlim_{i \in I} Z_i \subset \prod_{i \in I} Z_i
\]
consisting of all  $(z_i)_{i \in I}$ such that $\varphi_{i j}(z_j)= z_i$ for all $i \prec j$. 
\par
 A subset of a topological space $X$ is said to be \emph{locally closed} if it is the intersection of a closed subset and an open subset of $X$.
It is said to be \emph{constructible} if it is a finite union of locally closed subsets of $X$.
It is said  to be \emph{proconstructible} if it is the intersection of a family of constructible subsets~\cite[D\' efinition~I.9.4]{grothendieck-20-1964}. 
We shall say that a subset of $X$ is \emph{countably-proconstructible}
if it is the intersection of a countable family of constructible subsets. 
It is clear that every countably-proconstructible subset can be written as the intersection of a decreasing sequence of constructible subsets.
\par  
The following lemma is analogous to \cite[Lemma 4.1]{phung-2018}. 

\begin{lemma} 
\label{l:pro-constructible}
Let $K$ be an uncountable algebraically closed field and 
let $f \colon X \to Y$ be an algebraic morphism of algebraic varieties over $K$. 
If $(C_k)_{k \in \N}$ is a decreasing sequence of constructible subsets of $X$, 
then 
\[
f \left( \bigcap_{k \in \N} C_k(K) \right) = \bigcap_{k \in \N} f(C_k(K)) =  \bigcap_{k \in \N} f(C_k)(K). 
\]
\end{lemma}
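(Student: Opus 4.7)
The plan is to prove both equalities simultaneously by establishing the circular chain
\[
f \Bigl( \bigcap_{k \in \N} C_k(K) \Bigr) \subset \bigcap_{k \in \N} f(C_k(K)) \subset \bigcap_{k \in \N} f(C_k)(K) \subset f \Bigl( \bigcap_{k \in \N} C_k(K) \Bigr),
\]
in which only the last inclusion is substantive; the first two are formal and use no hypothesis on $K$. The first holds because images commute with intersections in one direction, and the second reduces to the level-wise observation that $f$ takes $K$-points to $K$-points, whence $f(C_k(K)) \subset f(C_k)(K)$ for every $k$.

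For the third inclusion I would fix $y \in \bigcap_{k} f(C_k)(K)$ and form the fiber $F \coloneqq f^{-1}(y)$, a closed subvariety of $X$, together with the decreasing sequence $D_k \coloneqq C_k \cap F$ of constructible subsets of $F$. Each $D_k$ is nonempty since $y \in f(C_k)$, and any $K$-point of $\bigcap_k D_k$ yields the required preimage of $y$. The lemma thereby reduces to the following key sublemma, into which the uncountability of $K$ is packaged: \emph{over an uncountable algebraically closed field $K$, every decreasing sequence of nonempty constructible subsets of a $K$-variety has nonempty intersection at the level of $K$-points.}

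To prove the sublemma, I would first exploit the Noetherianity of the Zariski topology on $F$: the closures $\overline{D_k}$ form a decreasing chain and hence stabilize, so after replacing $F$ with that eventual closure I may assume each $D_k$ is dense in $F$. Decomposing $F$ into its irreducible components $F_1, \dots, F_r$ and using the standard fact that a dense constructible subset of an irreducible variety contains a nonempty open subset, I would extract for each $k$ and each $i$ a nonempty open $U_{k,i} \subset F_i$ with $U_{k,i} \subset D_k$. Fixing one component $F_i$ (the case $\dim F_i = 0$ being trivial since $F_i$ is then a single $K$-point), it suffices to show $\bigcap_k U_{k,i}(K) \neq \varnothing$, that is, that $F_i(K)$ is not exhausted by the countable family of proper closed subsets $F_i \setminus U_{k,i}$.

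The main obstacle is precisely this last step, which is the only place where uncountability of $K$ intervenes. I would establish the required Baire-type statement -- that the $K$-points of an irreducible $K$-variety of positive dimension cannot be covered by countably many proper closed subsets when $K$ is uncountable and algebraically closed -- by induction on $d = \dim F_i$. The base case $d = 1$ is immediate because proper closed subsets of an irreducible curve are finite while $|F_i(K)| = |K|$ is uncountable. The inductive step uses a dominant morphism $F_i \to \A^1$ (obtained from Noether normalization after shrinking to an affine open): each proper closed subset is either non-dominant, in which case it lies in finitely many fibers, or dominant with generic fibers of strictly smaller dimension than those of $F_i \to \A^1$, so the inductive hypothesis applied on a generic fiber over the uncountably many $K$-points of $\A^1$ produces the desired point avoiding all of them.
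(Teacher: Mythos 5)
Your proposal is correct and follows essentially the same route as the paper: both arguments reduce the substantive inclusion to picking $y$ in the intersection, passing to the fiber $f^{-1}(y)$, and showing that the resulting decreasing sequence of nonempty constructible subsets has a common $K$-point. The one difference is that the paper treats this last fact as a black box, citing \cite[Lemma~B.3]{ccp-2019} (and \cite[Lemma~A.22.(v)]{ccp-2019} for the equality $f(C_k(K)) = f(C_k)(K)$, which you avoid needing in full by closing a circular chain of inclusions), whereas you prove the key sublemma from scratch. Your self-contained argument --- stabilize the closures by Noetherianity, reduce to dense constructible subsets of an irreducible component, and then show by induction on dimension via a dominant morphism to $\A^1$ that the $K$-points of a positive-dimensional irreducible variety over an uncountable algebraically closed field are not covered by countably many proper closed subsets --- is sound; the only point to polish is that the fibers of $F_i \to \A^1$ need not be irreducible, so the inductive hypothesis should be applied to a single $(d-1)$-dimensional irreducible component of a well-chosen fiber, which works since each $W_k$ meets that fiber in dimension at most $d-2$. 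What your approach buys is independence from the cited reference; what the paper's buys is brevity.
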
 

\begin{proof}
Since for each $k \in \N$, 
we have $f(C_k(K))= f(C_k)(K)$ (cf.~for example \cite[Lemma~A.22.(v)]{ccp-2019}), the second equality is verified.
For the first equality, we have trivially 
$f \left( \bigcap_{k \in \N} C_k(K) \right) \subset \bigcap_{k \in \N} f(C_k(K))$. 
Conversely, assume that $y \in \bigcap_{k \in \N} f(C_k(K))$. 
For each $k \in \N$, set 
\[ 
F_k \coloneqq   f^{-1}(y) \cap C_k (K)   \subset X (K). 
\] 
Note that $F_k$ is the set of closed points of a constructible subset of $X$. 
Remark also  that, for every $k \in \N$, 
we have $F_{k+1} \subset F_k$ and $F_k \not= \varnothing$. 
Hence, by \cite[Lemma B.3]{ccp-2019}, there exists $x \in \bigcap_{k \in \N} F_k$. 
Clearly, $f(x)=y$ and $x \in \bigcap_{k \in \N} C_k(K)$. Therefore,  
$ \bigcap_{k \in \N} f(C_k(K)) \subset f \left( \bigcap_{k \in \N} C_k(K) \right)$ and the proof is completed. 
\end{proof}

In the case (H1), we shall make use of the following generalization of \cite[Lemma B.2]{ccp-2019} 
to countable inverse systems of countably-proconstructible subsets.

\begin{lemma} 
\label{l:inverse-limit-seq-const}
Let $K$ be an uncountable algebraically closed field. 
Let $(X_i, f_{ij})$ be an  inverse system indexed by a countable directed set $I$, where   
each $X_i$ is a $K$-algebraic variety and each transition map  
$f_{i j} \colon X_j \to X_i$ is an algebraic morphism.
Suppose given,
for each $i \in I$, a nonempty countably-proconstructible subset $C_i \subset X_i$. 
Let $Z_i= C_i(K)$ and assume that $f_{ij}(Z_j)\subset Z_i$ for all  $i\prec j$ in $I$. 
Then the inverse system $(Z_i,\varphi_{i j})_I$,
where $\varphi_{i j} \colon Z_j \to Z_i$ is the restriction of $f_{i j}$ to $Z_j$, 
verifies $\varprojlim_{i \in I}  Z_i \neq \varnothing$.
\end{lemma}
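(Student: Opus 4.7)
The plan is to reduce to the case $I=\N$, build at each level the ``backward-orbit'' set
\[
W_n\coloneqq \bigcap_{m\ge n} f_{n,m}(Z_m)\subset Z_n,
\]
show each $W_n$ is nonempty and satisfies $f_{n,n+1}(W_{n+1})=W_n$, and then produce a compatible sequence by recursion. Since $I$ is countable and directed, it contains a cofinal sequence $i_0\prec i_1\prec\cdots$, and passing to it does not change the inverse limit; so we may assume $I=\N$ and write $f_{n,m}\colon X_m\to X_n$ for $n\le m$. For each $n$, fix a presentation $C_n=\bigcap_k C_n^{(k)}$ as a decreasing intersection of constructible subsets of $X_n$, each necessarily nonempty. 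Over the algebraically closed field $K$ each $C_n^{(k)}(K)$ is then nonempty, so \cite[Lemma~B.3]{ccp-2019} gives $Z_n=\bigcap_k C_n^{(k)}(K)\neq\varnothing$.

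The first and main step is to rewrite $W_n$ as a countable intersection of constructible sets of $K$-points and to check that this intersection is nonempty. Applying Lemma~\ref{l:pro-constructible} to $f_{n,m}$ and the decreasing sequence $(C_m^{(k)})_k$ yields
\[
f_{n,m}(Z_m)=\bigcap_{k}f_{n,m}\bigl(C_m^{(k)}\bigr)(K),
\]
where each $f_{n,m}(C_m^{(k)})$ is constructible by Chevalley's theorem; hence $W_n=\bigcap_{m\ge n,\ k\in\N} f_{n,m}(C_m^{(k)})(K)$, and by re-enumerating the countable index set and intersecting finite prefixes one obtains a decreasing sequence of constructibles $F_0^{(n)}\supset F_1^{(n)}\supset\cdots$ with $W_n=\bigcap_j F_j^{(n)}$. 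To see that each $F_j^{(n)}$ is nonempty, let $m^{\ast}$ be the maximum of the first indices $m_{j'}$ appearing in the prefix. For every $j'$ in the prefix, the inclusion $f_{m_{j'},m^{\ast}}(Z_{m^{\ast}})\subset Z_{m_{j'}}\subset C_{m_{j'}}^{(k_{j'})}(K)$ together with the composition law $f_{n,m_{j'}}\circ f_{m_{j'},m^{\ast}}=f_{n,m^{\ast}}$ shows that each factor of $F_j^{(n)}$ contains $f_{n,m^{\ast}}(Z_{m^{\ast}})$, which is nonempty since $Z_{m^{\ast}}\neq\varnothing$. A second application of \cite[Lemma~B.3]{ccp-2019} then yields $W_n\neq\varnothing$.

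The identity $f_{n,n+1}(W_{n+1})=W_n$ now follows from one more application of Lemma~\ref{l:pro-constructible}: commuting $f_{n,n+1}$ with the decreasing intersection defining $W_{n+1}$ produces $\bigcap_{m\ge n+1,\,k}f_{n,m}(C_m^{(k)})(K)$, which differs from the description of $W_n$ above only by the terms with $m=n$, whose intersection is $Z_n$, and this is redundant since $f_{n,n+1}(W_{n+1})\subset f_{n,n+1}(Z_{n+1})\subset Z_n$. Once this surjectivity property holds, a routine recursive choice (pick $z_0\in W_0$, then $z_{n+1}\in W_{n+1}$ with $f_{n,n+1}(z_{n+1})=z_n$) delivers the desired element of $\varprojlim_n Z_n$. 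The crux is the interchange of the algebraic morphisms $f_{n,m}$ with the countable intersections defining $W_n$ and $Z_m$; Lemma~\ref{l:pro-constructible} is precisely the tool that makes this work and bootstraps the constructible case \cite[Lemma~B.2]{ccp-2019} to its countably-proconstructible analogue.
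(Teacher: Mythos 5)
Your proposal follows the paper's own route almost exactly: reduce to $I=\N$ via a cofinal chain, form the derived system $W_n=\bigcap_{m\ge n}f_{n,m}(Z_m)$ (the paper's ``universal'' system $Z'_i$), realize each $W_n$ as a decreasing intersection of nonempty constructible sets by taking finite prefixes (the paper's diagonal sets $Y_n$), and invoke the uncountable-field lemma \cite[Lemma~B.2]{ccp-2019}. The nonemptiness of each $W_n$ is argued correctly and is essentially identical to the paper's computation.

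The gap is in the step $f_{n,n+1}(W_{n+1})=W_n$. Lemma~\ref{l:pro-constructible} applied to $f_{n,n+1}$ and the decreasing sequence $(F^{(n+1)}_j)_j$ gives $f_{n,n+1}(W_{n+1})=\bigcap_j f_{n,n+1}\bigl(F^{(n+1)}_j\bigr)$, but each $F^{(n+1)}_j$ is a \emph{finite intersection} of sets $f_{n+1,m}(C^{(k)}_m)(K)$, and the image of an intersection is in general strictly smaller than the intersection of the images. So ``commuting'' only yields $f_{n,n+1}(W_{n+1})\subset\bigcap_{m\ge n+1,\,k}f_{n,m}(C^{(k)}_m)(K)=W_n$, which is the easy inclusion and the useless one: your recursion needs every $z\in W_n$ to lift into $W_{n+1}$. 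The identity is true, but it needs a separate argument. One fix within your setup: given $z\in W_n$ and a prefix $F^{(n+1)}_j$ with largest occurring index $m^{\ast}$, note that $z\in f_{n,m^{\ast}}(Z_{m^{\ast}})$ (this is one of the terms defining $W_n$), choose $y\in Z_{m^{\ast}}$ with $f_{n,m^{\ast}}(y)=z$, and observe that $f_{n+1,m^{\ast}}(y)$ lies in $F^{(n+1)}_j$ and maps to $z$; since this holds for every $j$, the commuted formula gives $z\in f_{n,n+1}(W_{n+1})$. The paper instead fixes $z$ and applies \cite[Lemma~B.2]{ccp-2019} to the decreasing sequence of nonempty constructible sets $T_j\coloneqq f_{n,n+1}^{-1}(z)\cap F^{(n+1)}_j$, whose intersection is the fiber of the restricted transition map over $z$; the nonemptiness of each $T_j$ is exactly the lifting observation above. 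With that step supplied, the rest of your recursion goes through.
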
 

\begin{proof} 
Since $I$ is a countable directed set, we can find a totally ordered cofinal subset  $\{i_n : n \in \N\} \subset I$. 
 As $\varprojlim_{n \in \N} Z_{i_n} = \varprojlim_{i \in I} Z_i$, 
we can suppose, without any loss of generality, that $I= \N$. 
\par 
For each $i \in \N$, we can find a decreasing sequence of constructible subsets $(C_{ik})_{k \in \N}$ of $X_i$  
such that $C_i= \bigcap_{k \in \N} C_{ik}$. 
For $k \in \N$, let $Z_{ik}= C_{ik}(K)$. By Lemma \ref{l:pro-constructible}, 
we have for every $i \leq j$: 
\begin{equation}
\label{e:pro-constructible-1}
Z_i= \bigcap_{k = 0}^{\infty} Z_{ik} \neq \varnothing, \quad f_{ij}(Z_j) = \bigcap_{k=0}^{\infty} f_{ij} (Z_{jk}). 
\end{equation}
Consider the universal inverse system $(Z'_i, \varphi'_{ij})_{i,j \in \N}$ of the system 
$(Z_i, \varphi_{ij})_{i,j \in \N}$, i.e.,  for every $i \in \N$, let
\[ 
Z'_i \coloneqq \bigcap_{j = i}^{\infty} f_{ij}(Z_{j}) =\bigcap_{j = i}^{\infty} \varphi_{ij}(Z_{j}) 
\] 
and let the maps $\varphi'_{ij} \colon Z'_{j} \to Z'_i$ be the restrictions of $\varphi_{ij} \colon Z_j \to Z_i$. 
\par 
Remark that $\varprojlim_{i \in \N} Z'_i= \varprojlim_{i \in \N} Z_i$. 
Hence, it suffices to check that the sets $Z'_i$ are nonempty and the transition maps $\varphi'_{ij}$ are surjective for all $i \leq j$.  
By \eqref{e:pro-constructible-1}, Chevalley's theorem (see for example~\cite[Theorem~7.4.2]{vakil}, \cite[Th\' eor\` eme~I.8.4]{grothendieck-20-1964}) 
implies that each $Z'_i$ is a countable intersection of constructible sets: 
\[ 
Z'_i = \bigcap_{j=i}^\infty f_{ij}(Z_{j})  
= \bigcap_{j = i}^\infty \bigcap_{k=0}^\infty   f_{ij} (Z_{jk}). 
\] 
For each $n \geq i$, consider the diagonal set 
\[
Y_n \coloneqq  \bigcap_{j=i}^n \bigcap_{k = 0}^n f_{ij}(Z_{jk}) \subset X_i(K). 
\] 
By Chevalley's theorem, $Y_n$ is a constructible subset of $X_i(K)$. 
For every $n \geq i$, we have $Y_{n+1} \subset Y_n$ and since $Z_n \ne \varnothing$,  
\begin{equation}
\label{e:non-empty-1}
Y_n \supset  \bigcap_{j=i}^n \bigcap_{k = 0}^\infty f_{ij}(Z_{jk})
= \bigcap_{j=i}^n f_{ij} (Z_j) 
\supset f_{in}(Z_n) 
= \varphi_{in}(Z_n)
\neq \varnothing. 
\end{equation}
As $Z'_i = \bigcap_{n=i}^\infty Y_n$,   \cite[Lemma~B.2]{ccp-2019} implies that 
$Z'_i \neq \varnothing$ for $i \in \N$.  
Now let $k, i \in \N$ with $k \leq i$ and let $z \in Z'_k$.  
For each $n \geq i$, by definition  of $Z'_k$,  
there exists $y \in Z_n$ such that $\varphi_{k n}(y)=z$ and thus  
\begin{equation}
\label{e:non-empty-2}
\varphi_{i n}(y)\in \varphi_{k i}^{-1} (z) \cap \varphi_{i n} (Z_n) \neq \varnothing.
\end{equation}
By \eqref{e:non-empty-1}, \eqref{e:non-empty-2}  
 and for   $n \geq i$, the constructible subset 
\begin{equation}
\label{e:non-empty-3}
 T_n \coloneqq \varphi_{k i}^{-1} (z) \cap Y_n 
 \supset  \varphi_{k i}^{-1} (z) \cap \varphi_{in}(Z_n) 
\end{equation} 
is nonempty and $T_{n+1} \subset T_n$ as $Y_{n+1} \subset Y_n$. Finally, we find that: 
\begin{align*} 
(\varphi_{k i}')^{-1} (z) 
& =  \varphi_{k i}^{-1} (z)  \cap Z'_i 
= \bigcap_{n=i}^\infty  \varphi_{k i}^{-1} (z)  \cap Y_n = \bigcap_{n=i}^\infty T_n 
\end{align*} 
is nonempty by \cite[Lemma~B.2]{ccp-2019}. The proof is thus completed.  
\end{proof} 

We shall apply repeatedly the following result in the cases (H2)-(H3).   

\begin{lemma}
\label{l:inverse-limit-closed-im}
Let $K$ be an  algebraically closed field.
Let $(X_i, f_{ij})$ be an  inverse system indexed by a countable index set $I$, 
where    each $X_i$ is a nonempty  $K$-algebraic variety and each transition map  
$f_{i j} \colon X_j \to X_i$ is an   algebraic morphism
such that $f_{ij}(X_j) \subset X_i$ is a closed subset for all $i \prec j$. 
Then $\varprojlim_{i \in I} X_i(K) \neq \varnothing$.
\end{lemma}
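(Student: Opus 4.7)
The plan is as follows. Since $I$ is a countable directed set, I would first pick a totally ordered cofinal subset $\{i_n : n \in \N\}$; because $\varprojlim_{n \in \N} X_{i_n}(K) = \varprojlim_{i \in I} X_i(K)$, I may assume without loss of generality that $I = \N$.

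The key construction is the \emph{stable image}
\[
Y_i \coloneqq \bigcap_{j \geq i} f_{ij}(X_j) \subset X_i, \qquad i \in \N.
\]
The family $(f_{ij}(X_j))_{j \geq i}$ is decreasing in $j$ because the cocycle identity $f_{ik} = f_{ij} \circ f_{jk}$ gives $f_{ik}(X_k) \subset f_{ij}(X_j)$ for $k \geq j$; and each $f_{ij}(X_j)$ is a closed subvariety of $X_i$ by hypothesis. Since $X_i$ is a scheme of finite type over $K$, hence a Noetherian topological space, this decreasing chain of closed subsets stabilizes: there exists $N_i \geq i$ with $Y_i = f_{i, N_i}(X_{N_i})$. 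In particular, $Y_i$ is a nonempty closed subvariety of $X_i$.

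The crucial step is to show that for $i \leq j$, the restriction $\varphi_{ij} \coloneqq f_{ij}|_{Y_j} \colon Y_j \to Y_i$ is surjective on $K$-points. Choosing $k \geq \max(N_i, N_j)$, the stabilization gives $Y_j = f_{jk}(X_k)$ and $Y_i = f_{ik}(X_k)$, whence
\[
f_{ij}(Y_j) = f_{ij}\bigl(f_{jk}(X_k)\bigr) = f_{ik}(X_k) = Y_i.
\]
Because $K$ is algebraically closed, the $K$-points coincide with the closed points and, for a morphism of $K$-varieties, image at the variety level equals image on $K$-points; hence $\varphi_{ij}(Y_j(K)) = Y_i(K)$.

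Finally, I would construct an element of $\varprojlim_n Y_n(K)$ by a straightforward induction: choose any $y_0 \in Y_0(K)$ (possible since $Y_0 \neq \varnothing$), and having built $y_0, \ldots, y_n$ with $\varphi_{k, k+1}(y_{k+1}) = y_k$, use the surjectivity of $\varphi_{n, n+1}$ on $K$-points to pick $y_{n+1} \in Y_{n+1}(K)$ lifting $y_n$. By composing the transition maps, $(y_n)_{n \in \N}$ satisfies $\varphi_{mn}(y_n) = y_m$ for all $m \leq n$, so it defines a point in $\varprojlim_n Y_n(K) \subset \varprojlim_n X_n(K)$, proving the latter is nonempty. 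The main technical obstacle is the identity $f_{ij}(Y_j) = Y_i$: a morphism of varieties need not commute with infinite intersections, and the argument really relies on Noetherian stabilization to reduce this to a finite computation via the cocycle identity. Note that, in contrast to Lemma~\ref{l:inverse-limit-seq-const}, the uncountability of $K$ is not needed here because the closedness hypothesis on the images permits the purely Noetherian argument above.
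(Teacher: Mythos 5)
Your proof is correct. The paper itself gives no argument for this lemma beyond the citation ``The statement is proved in \cite[Proposition 4.2]{phung-2018}'', and your Mittag--Leffler-type argument --- reduce to $I=\N$, form the universal images $Y_i=\bigcap_{j\geq i}f_{ij}(X_j)$, use Noetherianity of $X_i$ to identify $Y_i$ with a single $f_{i,N_i}(X_{N_i})$ so that the restricted transition maps are surjective (also on $K$-points, since $K$ is algebraically closed), and then lift inductively along the chain --- is precisely the standard proof that the cited reference carries out. Your closing remark that the closedness hypothesis replaces the uncountability of $K$ needed in Lemma~\ref{l:inverse-limit-seq-const} is also accurate.
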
 

\begin{proof}
The statement is proved in \cite[Proposition 4.2]{phung-2018}. 
\end{proof}

% SECTION 4
\section{Space-time inverse systems}  
\label{s:space-time-system} 

Let $G$ be a finitely generated group and let $A$ be a set. 
Let $\Sigma \subset A^G$ be a closed subshift  and assume that  $\tau \colon \Sigma \to \Sigma$ is a cellular automaton.
Let $\widetilde{\tau} \colon A^G \to A^G$
be a cellular automaton extending $\tau$. 
\par 
Let $M\subset G$ be a  memory set of $\widetilde{\tau}$.
Since every finite subset of $G$ containing a memory set of $\widetilde{\tau}$  is itself a memory set of $\widetilde{\tau}$,
we can choose $M$  such that 
$1_G \in M$,   $M = M^{-1}$, and $M$ generates $G$. 
Note that this implies in particular that the sequence $(M^n)_{n \in \N}$ is an \emph{exhaustion} of $G$, that is,
\begin{enumerate}[(Mem1)]
\item 
  $M^{n+1} \supset M^n$ for all $n \in \N$ and
  \item
   $\bigcup_{n \in \N} M^n = G$.
   \end{enumerate}
      \par
Equip $\N^2$ with the product ordering  $\prec$. 
Thus, given $i,j,k,l \in \N$, we have  $(i,j) \prec (k,l)$ if and only if $i \leq k$ and $j \leq l$.
\par
We construct  an inverse system $(\Sigma_{i j})_{i,j \in \N}$ indexed by  the directed set $(\N^2, \prec)$ 
in the following way.
\par
Firstly, given $i,j \in \N$, we define  $\Sigma_{i j}$
as being the set consisting of the restrictions to $M^{i + j}$ of all the configurations that belong to  $\Sigma$, that is, 
\[
\Sigma_{i j} \coloneqq \Sigma_{M^{i+j}} = \{ x\vert_{M^{i  + j}} : x \in \Sigma \} \subset A^{M^{i + j}}.
\]
To  define the transition maps
$\Sigma_{k l} \to \Sigma_{i j}$ ($(i,j) \prec (k,l)$) 
of the inverse system $(\Sigma_{ij})_{i,j \in \N}$,
it is clearly enough  to define, for all $i,j \in \N$,  the
 \emph{unit-horizontal} transition  map 
$p_{i j} \colon \Sigma_{i+1, j}  \to \Sigma_{i j}$,
 the \emph{unit-vertical} transition map $q_{i j} \colon \Sigma_{i,j+1} \to \Sigma_{i j}$,
 and verify that the diagram 
 \[
\begin{tikzcd}
%\label{d-abelian-family-diagram}
\Sigma_{i, j+1}  
  \arrow[d, swap, "q_{ij}"]  
&  \Sigma_{i+1, j+1}  \arrow[l, swap,  "p_{i, j+1}"]  \arrow[d, "q_{i+1,j}"]   \\ 
\Sigma_{ij}   & \Sigma_{i+1, j} \arrow[l, "p_{ij}"],   
\end{tikzcd}
\] 
is commutative, i.e.,
\begin{equation}
\label{e:p-q-commute}
q_{i j} \circ p_{i,j+1} = p_{i j} \circ q_{i+1,j} \quad \text{ for all } i,j \in \N. 
\end{equation}
We define $p_{i j}$ as being the map obtained by restriction to $M^{i + j} \subset M^{i + j + 1}$.
Thus for all $\sigma \in \Sigma_{i + 1,j}$, we have 
\begin{equation}
\label{e:def-p-ij}
p_{i j}(\sigma) = \sigma\vert_{M^{i + j}}. 
\end{equation} 
To define $q_{i j}$, we first observe that, 
given $x \in \Sigma$ and $g \in G$,
it follows from~\eqref{e;local-property} applied to $\widetilde{\tau}$ that
$\tau(x)(g)$ only depends on the restriction of $x$ to $gM$.
As $gM \subset M^{i + j + 1}$ for all $g \in M^{i + j}$,
we deduce from this observation that, given $\sigma \in \Sigma_{i,j + 1}$ and $x \in \Sigma$ extending $\sigma$,
the formula
\begin{equation}
\label{e:def-q-ij}
q_{i j}(\sigma) \coloneqq (\tau(x))\vert_{M^{i + j}} 
\end{equation}
yields a well-defined element $q_{i j}(\sigma) \in \Sigma_{i j}$
and hence a map $q_{i j} \colon \Sigma_{i,j+1} \to \Sigma_{i j}$.
\par
To check that~\eqref{e:p-q-commute} is satisfied,
let $\sigma \in \Sigma_{i+1,j+1}$ and choose a configuration $x \in \Sigma$ extending $\sigma$.
By applying~\eqref{e:def-p-ij},
we see that $p_{i,j+1}(\sigma) = x\vert_{M^{i + j + 1}}$.
Therefore, using~\eqref{e:def-q-ij}, we get 
\begin{equation}
\label{e:p-q-commute-1}
q_{i j} \circ p_{i,j+1} (\sigma) = q_{i j} ( p_{i,j+1}(\sigma)) = q_{i j} (x\vert_{M^{i + j + 1}}) = (\tau(x))\vert_{M^{i + j}}.
\end{equation}
On the other hand, by applying again~\eqref{e:def-q-ij}, we see that
$q_{i+1,j} (\sigma) = (\tau(x))\vert_{M^{i + j + 1}}$.
Therefore, using~\eqref{e:def-p-ij}, we get 
  \begin{equation}
\label{e:p-q-commute-2}
  p_{i j} \circ q_{i+1,j} (\sigma) = p_{i j} ( q_{i+1,j} (\sigma)) = p_{i j} ((\tau(x))\vert_{M^{i + j + 1}}) = (\tau(x))\vert_{M^{i + j }}.
  \end{equation}
We deduce from~\eqref{e:p-q-commute-1} and~\eqref{e:p-q-commute-2} that
  $q_{i j} \circ p_{i,j+1} (\sigma) = p_{i j} \circ q_{i+1,j} (\sigma) $ for all $\sigma \in \Sigma_{i + 1,j + 1}$. This shows~\eqref{e:p-q-commute}.

  \begin{definition}
\label{d:space-time-system}  
The inverse system $(\Sigma_{ij})_{i,j \in \N}$ is called the \emph{space-time inverse system} 
associated with the triple $(\Sigma,\tau,M)$. 
\end{definition} 

It might be useful to consider the inverse system $(\Sigma_{ij})_{i,j \in \N}$ as a refined diagram of the space-time evolution 
of the cellular automaton $\tau$ that in addition keeps track of the local dynamics. 
Comparing to the usual space-time diagram of a classical cellular automaton introduced in~\cite{wolfram-new-kind}, 
in~\cite{milnor-ca-1988} or recently 
in~\cite{cyr-kra-space-2019}, the main difference of our construction is the following. 
First, the horizontal direction indexed by $i \in \N$ in our space-time inverse system represents the extension 
of the ambient spaces of $1_G$ instead of the exact position in the universe $G$ as in the classical diagram. 
Second, the vertical direction indexed by $j \in \N$ represents the past instead of the future.  
More precisely, let us fix $i \in \N$ and consider the induced inverse subsystem $(\Sigma_{ij})_{j \in \N}$ lying above 
$\Sigma_{i0}= \Sigma_{M^i}$. Then each $(\Sigma_{ij})_{j \in \N}$ should be regarded as an approximation of 
the past light cone of the events happening in $M^i$, i.e., 
of confugurations $\sigma \in \Sigma_{M^i}$. 
 
\begin{remark}
\label{r:notation-full-shift}
Observe that the inverse system $(\Sigma_{i j})_{i,j \in \N}$
is a subsystem of the \emph{full} inverse system $(A^{M^{i + j}})_{i,j \in \N}$ associated with the triple
$(A^G,\widetilde{\tau},M)$.
In the sequel, 
we shall denote by
$\widetilde{p} _{ij}\colon A^{M^{i+j+1}} \to A^{M^{i+j}}$ (resp.~$\widetilde{q}_{ij} \colon A^{M^{i+j+1}} \to A^{M^{i+j}}$)
the unit horizontal (resp.~vertical) transition maps of the full inverse system    
$(A^{M^{i + j}})_{i,j \in \N}$.
\end{remark} 

\begin{remark}
\label{r:full-system-homomorphism}
For the hypotheses (H1), (H2) and (H3) in the Introduction, we have the following easy but useful remark.  
If $A=V(K)$ for some algebraic variety $V$ over an algebraically closed field $K$ and if 
$\widetilde{\tau} \colon A^G \to A^G$ is an algebraic (resp. algebraic group) cellular automaton, then 
the transition maps of the full inverse system $(A^{M^{i + j}})_{i,j \in \N}$ 
are algebraic morphisms (resp. homomorphisms of algebraic groups). 
\end{remark}

If we fix $j \in \N$ in our space-time  inverse system,
we get a \emph{horizontal} inverse system $(\Sigma_{i j})_{i \in \N}$ indexed by $\N$
whose transition maps are the restriction maps $p_{i j} \colon \Sigma_{i + 1,j} \to \Sigma_{i j}$, $i \in \N$.
It immediately follows from the closedness of $\Sigma$ in $A^G$ and properties (Mem1)-(Mem2) that the~limit
\begin{equation}
\label{e:row-limit}
\Sigma_j \coloneqq \varprojlim_{i \in \N} \Sigma_{ij} 
\end{equation}
can be identified with $\Sigma$ in a canonical way.
Moreover, the maps $q_{i j} \colon \Sigma_{i, j + 1} \to \Sigma_{i j}$
define an inverse system morphism from
the inverse system $(\Sigma_{i, j + 1})_{i \in \N}$ to the inverse system $(\Sigma_{i j})_{i \in \N}$.
This yields a limit map $\tau_j \colon \Sigma_{j + 1} \to \Sigma_j$.
Using the identifications $\Sigma_{j + 1} = \Sigma_j = \Sigma$, we have $\tau_j = \tau$ for all $j \in \N$.
We deduce that 
the limit
\begin{equation}
\label{e:backward-orbits}
\varprojlim_{i,j \in \N} \Sigma_{ij} = \varprojlim_{j \in \N} \Sigma_{j}
\end{equation} 
is the set of \emph{backward orbits} (or \emph{complete histories}~\cite{milnor-ca-1988}) of $\tau$, that is, 
the set consisting of all sequences $(x_j)_{j \in \N}$ such that $x_j \in \Sigma$ and $x_j = \tau(x_{j+1})$ for all $j \in \N$. 
Such a sequence satisfies $x_0 = \tau^n(x_n)$ for all $n \in \N$ and hence $x_0 \in \Omega(\tau)$.
Thus we obtain the following result.
\begin{lemma} 
\label{l:space-time-ls}
We have a canonical map $\Phi \colon \varprojlim_{i,j \in \N} \Sigma_{ij}   \to \Omega(\tau)$. 
In particular, we have that: 
\[
\varprojlim_{i,j \in \N} \Sigma_{ij}  \neq \varnothing \implies \Omega(\tau)  \neq \varnothing.
\]
\qed 
\end{lemma}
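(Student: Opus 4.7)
The plan is to make explicit the construction already outlined in the paragraphs preceding the statement. The key observation is that the double inverse limit factors as an iterated inverse limit, and each horizontal slice can be identified with $\Sigma$ itself.

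First I would note that, by general nonsense on inverse limits over the product directed set $(\N^2,\prec)$, one has
\[
\varprojlim_{i,j \in \N} \Sigma_{ij} \;=\; \varprojlim_{j \in \N} \Bigl( \varprojlim_{i \in \N} \Sigma_{ij} \Bigr) \;=\; \varprojlim_{j \in \N} \Sigma_j,
\]
with the transition maps $\Sigma_{j+1} \to \Sigma_j$ induced from the vertical unit transitions $q_{ij}$. Next, for each fixed $j \in \N$, I would identify $\Sigma_j$ with $\Sigma$ as follows. The exhaustion properties (Mem1)--(Mem2) give $A^G = \varprojlim_{n \in \N} A^{M^n}$ with transition maps given by restrictions. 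Since $\Sigma$ is closed in $A^G$ (equipped with the prodiscrete uniform structure), the canonical map
\[
\Sigma \longrightarrow \varprojlim_{i \in \N} \Sigma_{ij}, \qquad x \longmapsto \bigl( x|_{M^{i+j}} \bigr)_{i \in \N}
\]
is a bijection; injectivity is clear, and surjectivity uses exactly that $\Sigma$ is closed (a compatible family glues to a configuration in $A^G$ which, being a limit of restrictions of elements of $\Sigma$, lies in $\Sigma$).

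Under this identification, I would check that the limit map $\tau_j \colon \Sigma_{j+1} \to \Sigma_j$ induced by the $q_{i,j}$ corresponds to $\tau \colon \Sigma \to \Sigma$; this is immediate from the definition $q_{ij}(\sigma) = \tau(x)|_{M^{i+j}}$ for any extension $x \in \Sigma$ of $\sigma$. Consequently an element of $\varprojlim_{j \in \N} \Sigma_j$ is nothing but a backward orbit, i.e.\ a sequence $(x_j)_{j \in \N}$ in $\Sigma$ with $\tau(x_{j+1}) = x_j$ for all $j$. I would then define
\[
\Phi \colon \varprojlim_{i,j \in \N} \Sigma_{ij} \longrightarrow \Omega(\tau), \qquad (x_j)_{j \in \N} \longmapsto x_0,
\]
and verify that $\Phi$ takes values in $\Omega(\tau)$: by induction $x_0 = \tau^n(x_n) \in \tau^n(\Sigma)$ for every $n \geq 1$, hence $x_0 \in \bigcap_{n \geq 1} \tau^n(\Sigma) = \Omega(\tau)$. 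The implication $\varprojlim_{i,j} \Sigma_{ij} \neq \varnothing \Rightarrow \Omega(\tau) \neq \varnothing$ is then immediate from the existence of $\Phi$.

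There is no real obstacle here; the only slightly delicate point is the identification $\Sigma \cong \varprojlim_i \Sigma_{ij}$, where one must use both the exhaustion property of $(M^n)_{n \in \N}$ and the closedness of $\Sigma$ in $A^G$ to ensure that compatible families of restrictions actually come from an element of $\Sigma$. Everything else is formal.
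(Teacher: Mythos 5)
Your proposal is correct and follows essentially the same route as the paper: the paper's own justification (given in the discussion immediately preceding the lemma, which is why it is stated with \qed) likewise factors $\varprojlim_{i,j}\Sigma_{ij}$ as $\varprojlim_j\Sigma_j$, identifies each $\Sigma_j$ with $\Sigma$ via closedness and the exhaustion (Mem1)--(Mem2), observes that the induced vertical maps are $\tau$, and sends a backward orbit $(x_j)_j$ to $x_0=\tau^n(x_n)\in\Omega(\tau)$. No differences worth noting.
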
 
We will see that the map $\Phi \colon \varprojlim_{i,j \in \N} \Sigma_{ij}   \to \Omega(\tau)$ 
is surjective in the algebraic setting (cf.~Theorem \ref{t:space-time-ls-alg}). 
Therefore, in this case, every limit configuration $x \in \Omega(\tau)$ admits a backward orbit 
and $\tau(\Omega(\tau)) = \Omega(\tau)$. 

% SECTION 5
\section{Approximation of subshifts of finite type}
\label{s:sft}
In this section, keeping all the notations and hypotheses introduced in the previous section,   
we assume in addition  that $\Sigma$ is a subshift of finite type.
We  fix a finite subset $D \subset G$ and a subset $P \subset A^D$ such that
$\Sigma = \Sigma(D, P)$ (cf. \eqref{e:sft}). We begin with a useful observation: 
\begin{lemma}
\label{l:base-change-sft}
For every finite subset $E \subset G$ such that $D \subset E$, we have 
$\Sigma = \Sigma(D, P) = \Sigma(E, \Sigma_E)$. 
\end{lemma}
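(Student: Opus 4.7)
The plan is to prove the two set equalities via elementary membership arguments, exploiting the shift invariance of $\Sigma$ and the hypothesis $D \subset E$.

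First I would observe that the first equality $\Sigma = \Sigma(D,P)$ is just the standing assumption and requires no argument. So the content is the second equality $\Sigma(D,P) = \Sigma(E, \Sigma_E)$, which I would prove by double inclusion.

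For the inclusion $\Sigma \subset \Sigma(E, \Sigma_E)$: let $x \in \Sigma$. Since $\Sigma$ is a subshift of $A^G$, it is $G$-invariant, so $g^{-1}x \in \Sigma$ for every $g \in G$. By the definition of $\Sigma_E$, this gives $(g^{-1}x)|_E \in \Sigma_E$ for every $g \in G$, which is exactly what membership in $\Sigma(E, \Sigma_E)$ requires.

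For the reverse inclusion $\Sigma(E, \Sigma_E) \subset \Sigma$: let $x \in \Sigma(E, \Sigma_E)$ and fix $g \in G$. Then $(g^{-1}x)|_E \in \Sigma_E$, so there exists $y \in \Sigma$ with $y|_E = (g^{-1}x)|_E$. Since $D \subset E$, restricting further yields $y|_D = (g^{-1}x)|_D$. But $y \in \Sigma = \Sigma(D,P)$ applied at the identity element $1_G \in G$ forces $y|_D \in P$, hence $(g^{-1}x)|_D \in P$. As $g \in G$ was arbitrary, this shows $x \in \Sigma(D,P) = \Sigma$.

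There is no real obstacle here: the lemma is a standard compatibility statement saying that one can enlarge the defining window of a subshift of finite type as long as one simultaneously replaces the pattern set $P$ by the full set of legal $E$-patterns $\Sigma_E$. The only point to be careful about is the direction $\Sigma(E, \Sigma_E) \subset \Sigma$, where one must remember to translate by each $g \in G$ before using the $D$-constraint, which is immediate from the shift invariance built into the definition~\eqref{e:sft}.
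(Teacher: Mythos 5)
Your proof is correct and follows essentially the same route as the paper: the first inclusion uses $G$-invariance of $\Sigma$ to get $(g^{-1}x)\vert_E \in \Sigma_E$, and the reverse inclusion restricts a witness from $\Sigma_E$ down to $D$ and invokes the defining condition of $\Sigma(D,P)$ at the identity, which is exactly the paper's observation that $(\Sigma_E)_D \subset P$. No issues.
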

\begin{proof} 
Let $x \in \Sigma$ and $g \in G$, then clearly $(g^{-1}x) \vert_E \in \Sigma_E$. 
Thus $\Sigma \subset \Sigma(E, \Sigma_E)$. Conversely, let 
$x \in \Sigma(E, \Sigma_E)$ and $g \in G$, then 
$(g^{-1}x)\vert_D =  ((g^{-1}x)\vert_E)\vert_D\in (\Sigma_E)_D \subset P$ since $D \subset E$. 
Therefore, $x \in \Sigma(D,P)=\Sigma$ and the conclusion follows. 
\end{proof}
 
For all $i, j \in \N$, we define $D_{ij} \coloneqq \{ g\in G \colon gD \subset M^{i+j} \}$ and 
\begin{equation}
\label{e:a-ij} 
A_{ij} \coloneqq \{x \in A^{M^{i+j}} \colon (g^{-1} x)\vert_D \in P \text{ for all } g\in D_{ij} \}. 
\end{equation}

Remark that  $\Sigma_{ij}  \subset A_{ij}$. 
Indeed, we have
\begin{align*}
\Sigma_{ij} & =  \{ x \in A^{M^{i+j}} \colon \exists y \in A^G, x = y \vert_{M^{i+j}}, (g^{-1}  y)\vert_{D} \in P \text{ for all } g \in G\}  \\ 
& \subset  \{ x \in A^{M^{i+j}} \colon \exists y \in A^G, x = y \vert_{M^{i+j}}, (g^{-1}  y)\vert_{D} \in P \text{ for all } g \in D_{ij}\} \\ 
&= \{ x \in A^{M^{i+j}} \colon (g^{-1} x)\vert_D \in P \text{ for all } g\in D_{ij} \} \\ 
& = A_{ij}. 
\end{align*} 
Remark also that for all $(i, j) \prec (k, l)$ in $\N^2$, 
we have $D_{ij} \subset D_{kl}$ because $M^{i+j} \subset M^{k+l}$ by (Mem1). 
\par
For $i,j,k \in \N$ such that $i \leq k$, consider the canonical projection 
\begin{equation} 
\widetilde{p}_{ijk} \colon A^{M^{k+j}} \to A^{M^{i+j}}, \quad x \mapsto x\vert_{M^{i+j}}. 
\end{equation}
Clearly, $\widetilde{p}_{ijk}(A_{k  j}) \subset A_{ij}$ since $D_{ij} \subset D_{k+j}$. 
We thus obtain well-defined projection maps 
\begin{equation}
\label{e:hor-trans-map}
p_{ijk} \colon  A_{k  j} \to  A_{ij}, 
\end{equation}
which extend the horizontal transition maps $\Sigma_{k  j} \to \Sigma_{ij}$ of the space-time inverse system $(\Sigma_{ij})_{i,j \in \N}$ associated with  $\tau \colon \Sigma  \to \Sigma$ and the memory set $M$. 

\begin{remark} 
\label{r:a-ij-not-subsystem}
In general, $(A_{ij})_{i,j \in \N}$ is not a subsystem of the   space-time inverse system 
$(A^{M^{i+j}})_{i, j \in \N}$ associated with $(A^G,\widetilde{\tau},M)$ (cf.~Remark \ref{r:notation-full-shift}).
 There is no trivial reason for $\widetilde{q}_{ij}(A_{i,j+1}) \subset A_{ij}$ unless $\Sigma$ is the full shift.   
\end{remark} 

The following lemma says that  each row of the system $(A_{ij}, p_{ijk})_{i,j,k \in \N}$ gives us an approximation 
of $\Sigma$.

\begin{lemma}
\label{l:sft-hor} 
For every $j \in \N$, there is a canonical bijection 
\[
\Psi_j  \colon \Sigma \to \varprojlim_{i \in \N} (A_{i j}, p_{i j k}).  
\]
\end{lemma}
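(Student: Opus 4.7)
The plan is to exhibit $\Psi_j$ explicitly as the restriction-and-glue map and verify bijectivity via a gluing argument that uses property (Mem2) in a crucial way.

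First, I would define $\Psi_j$ by $\Psi_j(x) \coloneqq (x\vert_{M^{i+j}})_{i \in \N}$ for every $x \in \Sigma$. Well-definedness requires two checks. For each $i \in \N$, we have $x\vert_{M^{i+j}} \in \Sigma_{ij} \subset A_{ij}$ from the inclusion noted before the statement. Compatibility with the transition maps $p_{ijk}$ is immediate because, by \eqref{e:hor-trans-map} and the definition of $\widetilde{p}_{ijk}$, the $p_{ijk}$ are themselves restriction maps and restrictions compose as expected: $p_{ijk}(x\vert_{M^{k+j}}) = (x\vert_{M^{k+j}})\vert_{M^{i+j}} = x\vert_{M^{i+j}}$ for $i \leq k$.

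For injectivity, suppose $\Psi_j(x) = \Psi_j(y)$ for some $x, y \in \Sigma$. Then $x\vert_{M^{i+j}} = y\vert_{M^{i+j}}$ for all $i \in \N$. By (Mem2), the sets $M^{i+j}$ exhaust $G$ as $i \to \infty$, so $x = y$ pointwise.

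Surjectivity is the main content and is where I would spend the most care. Let $(x_i)_{i \in \N} \in \varprojlim_{i \in \N} A_{ij}$, so each $x_i \in A_{ij}$ and $x_k\vert_{M^{i+j}} = x_i$ whenever $i \leq k$. By (Mem1) and (Mem2), the family $(x_i)_{i \in \N}$ is a compatible family of restrictions defined on an exhausting increasing chain of subsets of $G$, hence glues uniquely to a configuration $x \in A^G$ with $x\vert_{M^{i+j}} = x_i$ for every $i \in \N$. It remains to show $x \in \Sigma = \Sigma(D,P)$, i.e.\ $(g^{-1} x)\vert_D \in P$ for every $g \in G$. Fix such a $g$; since $gD$ is finite and $G = \bigcup_n M^n$, there exists $i \in \N$ large enough that $gD \subset M^{i+j}$, i.e.\ $g \in D_{ij}$. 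By construction $x\vert_{M^{i+j}} = x_i \in A_{ij}$, and the definition \eqref{e:a-ij} of $A_{ij}$ then gives $(g^{-1} x_i)\vert_D \in P$. Since $gD \subset M^{i+j}$, we have $(g^{-1} x)\vert_D = (g^{-1} x_i)\vert_D$, so $(g^{-1} x)\vert_D \in P$ as required. Thus $x \in \Sigma$ and $\Psi_j(x) = (x_i)_{i \in \N}$.

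I do not expect any serious obstacle here: the lemma is essentially a bookkeeping statement comparing the sub-finite-type description of $\Sigma$ via $(D,P)$ with its description as a projective limit of its restrictions to the exhausting sequence $(M^{i+j})_{i \in \N}$. The only subtle point is making sure one uses $A_{ij}$ rather than $\Sigma_{ij}$ in the verification of surjectivity, since the latter would require already knowing that local compatibility at distance $D_{ij}$ extends to a global $\Sigma$-configuration, which is precisely what the gluing argument establishes.
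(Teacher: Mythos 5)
Your proof is correct and follows essentially the same route as the paper's: define $\Psi_j$ by restriction to the exhausting sets $M^{i+j}$, and construct the inverse by gluing a compatible family via (Mem1)--(Mem2) and then checking membership in $\Sigma(D,P)$ by choosing $i$ with $gD \subset M^{i+j}$ so that the defining condition of $A_{ij}$ applies. The only cosmetic difference is that you spell out injectivity and the compatibility with the transition maps explicitly, which the paper leaves implicit.
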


\begin{proof} 
Since $\Sigma_{i j} \subset A_{ij}$ for all $i, j\in \N$, each $x \in \Sigma$ defines naturally an element $\Psi_j(x) = (x\vert_{M^{i+j}})_{i \in \N}\in \varprojlim_{i \in \N} (A_{ij}, p_{ijk})$. Conversely, 
let $(x_i)_{i \in \N} \in \varprojlim_{i \in \N} (A_{ij}, p_{ijk})$.
 Define $x \in A^G$ by setting, 
for each $g \in G$, $x (g) \coloneqq x_i(g)$ for 
any $i \in \N$ large enough such that $g \in M^{i + j}$.
The fact that the configuration $x \in A^G$ is well defined follows from (Mem1) and (Mem2).
Let $g \in G$.
Take $i$ large enough so that $g D \subset M^{i + j}$.
Then $g \in D_{i j}$ and $(g^{-1} x)\vert_D = (g^{-1} x_i)\vert_D \in P$ since $x_i \in A_{i j}$.
This shows that $x \in \Sigma$.  
\end{proof}

\begin{lemma} 
\label{l:sft-general}
For all $i, j \in \N$, we have   
\[ 
\Sigma_{ij}  \subset \bigcap_{k \geq i} p_{i j k}(A_{k j}). 
\]
\end{lemma}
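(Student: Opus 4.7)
The plan is to use the observation already recorded just after the definition of $A_{ij}$, namely that $\Sigma_{kj} \subset A_{kj}$ for every $k, j \in \N$, together with the fact that the maps $p_{ijk}$ are just restriction maps between the ambient full shift configurations on $M^{k+j}$ and on $M^{i+j}$.

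More concretely, fix $i, j \in \N$ and let $\sigma \in \Sigma_{ij}$. By the definition of $\Sigma_{ij}$, there exists $x \in \Sigma$ such that $\sigma = x\vert_{M^{i+j}}$. For any $k \geq i$, set $\sigma_k \coloneqq x\vert_{M^{k+j}}$. Then $\sigma_k \in \Sigma_{kj}$ by the very definition of $\Sigma_{kj}$, and hence $\sigma_k \in A_{kj}$ thanks to the inclusion $\Sigma_{kj} \subset A_{kj}$ established in the paragraph following~\eqref{e:a-ij}. Moreover, since $p_{ijk}$ is induced by restriction from $M^{k+j}$ to $M^{i+j}$ (cf.~the definition at~\eqref{e:hor-trans-map} and the remark preceding it), one computes
\[
p_{ijk}(\sigma_k) \;=\; \sigma_k\vert_{M^{i+j}} \;=\; x\vert_{M^{i+j}} \;=\; \sigma.
\]
Thus $\sigma \in p_{ijk}(A_{kj})$. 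Since $k \geq i$ was arbitrary, we conclude that $\sigma \in \bigcap_{k \geq i} p_{ijk}(A_{kj})$, which yields the desired inclusion.

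There is no real obstacle here: the statement is a direct unwinding of the definitions, and the only ingredient beyond this is the previously established $\Sigma_{kj} \subset A_{kj}$. The point of the lemma, of course, is that the reverse inclusion is not automatic in general (compare Remark~\ref{r:a-ij-not-subsystem}), and presumably this one-sided approximation is all that is needed in the subsequent arguments.
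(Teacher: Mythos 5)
Your proof is correct and is essentially identical to the paper's: both arguments extend an element of $\Sigma_{ij}$ to a configuration $x \in \Sigma$, restrict $x$ to $M^{k+j}$ to land in $\Sigma_{kj} \subset A_{kj}$, and observe that $p_{ijk}$ recovers the original restriction. No gap.
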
 

\begin{proof}
Let $y \in \Sigma$ and let $x= y\vert_{M^{i+j}} \in \Sigma_{ij}$. 
Let  $k \geq i$.  
Since  $y\vert_{M^{k+j}} \in \Sigma_{M^{k+j}} \subset A_{k j}$, 
it follows that 
\[
x = y\vert_{M^{i+j}}   = p_{ijk}(y\vert_{M^{k+j}}) \subset p_{ijk}(A_{k  j}). 
\]
As $y$ is arbitrary, the proof is finished. 
\end{proof}

% SECTION 6
\section{Algebraic subshifts of finite type}
\label{s:alg-space-time} 
Keeping the notations and hypotheses of Section~\ref{s:sft},   
we assume in this section that $A=V(K)$ and $P=W(K)$, 
where $V$ is an algebraic variety over an algebraically closed field $K$ 
and $W \subset V^D$ is an algebraic subvariety. 
Thus, $\Sigma=\Sigma(D, P) \subset A^G$ is an algebraic subshift of finite type. 
%Then $\Sigma$ is also an algebraic sofic subshift so that we can talk about the conditions (H1), (H2), (H3) in the Introduction. 
\par 
For all $i, j \in \N$, it is clear that  
$A_{ij}$ is a closed algebraic subset of $A^{M^{i+j}}$ 
since it is a finite intersection of sets of closed points of closed subvarieties of $V^{M^{i+j}}$: 
\begin{equation} 
\label{e:a-ij-closed}
A_{ij} =  \bigcap_{g \in D_{ij}} \pi_{ij,g}^{-1} (gW) (K). 
\end{equation}

Here, $\pi_{ij,g} \colon V^{M^{i+j}} \to V^{gD}$ is the projection induced by the inclusion 
$gD \subset M^{i+j}$ for $g \in D_{ij}$.  
The subset $gW \subset V^{gD}$ is defined as the image of $W$ under the isomorphism 
$V^D \simeq V^{gD}$ induced by the bijection $D \simeq gD$ given by $h \mapsto gh$ for every $h \in D$.  
\par
Observe that the maps $\pi_{ij,g}$ above and the transition maps of the inverse system 
$(\Sigma_{ij})_{i,j \in \N}$ 
are induced by morphisms of algebraic varieties.  
\par 
In this section, we consider the following conditions (C2) and (C3): 
 \begin{enumerate}
 \item [(C2)]
 $V$ is a complete $K$-algebraic variety;  
\item [(C3)]
 $V$ is a $K$-algebraic group and $W \subset V$ 
is an algebraic subgroup. 
\end{enumerate}
\begin{remark}
In the case (C3), note that the projections 
$p_{ijk} \colon  A_{k  j} \to  A_{ij}$ (cf.~\eqref{e:hor-trans-map}) 
are homomorphisms of algebraic groups. 
\end{remark}

\begin{proposition} 
\label{p:sigma-pro-constructible} 
With the above notations and hypotheses, suppose in addition that one of the conditions 
$(\mathrm{H1})$, $(\mathrm{C2})$, $(\mathrm{C3})$ is satisfied. 
Then, for each $i, j \in \N$, we have   
\begin{equation}
\label{e:pro-constructible-equality}
\Sigma_{ij} = \bigcap_{k \geq i} p_{ijk}(A_{k j}) 
\end{equation}
and  $\Sigma_{i j}$ is a countably-proconstructible subset of $A^{M^{i+j}}$.
Moreover, in the case $(\mathrm{C2})$ (resp.~$(\mathrm{C3})$),  
$\Sigma_{ij}$ is a complete subvariety (resp.~an algebraic subgroup) of $A^{M^{i+j}}$. 
\end{proposition}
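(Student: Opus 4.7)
The plan is to establish the equality \eqref{e:pro-constructible-equality} first; the structural claims for $\Sigma_{ij}$ then follow from Chevalley's theorem and Remark~\ref{r:h1-h2} applied termwise to the intersection on the right-hand side. The inclusion $\Sigma_{ij} \subset \bigcap_{k \geq i} p_{ijk}(A_{kj})$ is exactly Lemma~\ref{l:sft-general}, so the core of the proof is the reverse inclusion, where the three hypotheses (H1), (C2), (C3) will enter through distinct nonemptiness-of-inverse-limit arguments.

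To prove the reverse inclusion, I would fix $x \in \bigcap_{k \geq i} p_{ijk}(A_{kj})$ and build a configuration $y \in \Sigma$ with $y\vert_{M^{i+j}} = x$. For each $k \geq i$, set
\[
F_k \coloneqq p_{ijk}^{-1}(x) \cap A_{kj}.
\]
By hypothesis each $F_k$ is nonempty, and by \eqref{e:a-ij-closed} together with the fact that $p_{ijk}$ is induced by the algebraic projection $V^{M^{k+j}} \to V^{M^{i+j}}$, $F_k$ is the set of $K$-points of a closed subvariety $\widetilde{F}_k \subset V^{M^{k+j}}$; in case (C3), $\widetilde{F}_k$ is moreover a coset of an algebraic subgroup, since $p_{ijk}$ is then a homomorphism of algebraic groups. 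The projections $p_{kjl}$ for $l \geq k \geq i$ restrict to algebraic morphisms $\widetilde{F}_l \to \widetilde{F}_k$, giving a countable inverse system, and any element of $\varprojlim_k F_k$ yields a compatible family $(y_k)_{k \geq i}$ which, through Lemma~\ref{l:sft-hor} applied to the cofinal subset $\{k \geq i\}$, produces the desired $y \in \Sigma$.

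To ensure $\varprojlim_k F_k \neq \varnothing$, the strategy differs by case. Under (H1), each $F_k$ is closed hence trivially countably-proconstructible, so Lemma~\ref{l:inverse-limit-seq-const} applies directly (the inclusion $p_{kjl}(F_l) \subset F_k$ being automatic from $p_{ijk} \circ p_{kjl} = p_{ijl}$). Under (C2), $V$ and hence $V^{M^{l+j}}$ are complete, so the closed subvariety $\widetilde{F}_l$ is complete, and its image under $p_{kjl}$ is therefore closed by Remark~\ref{r:h1-h2}; Lemma~\ref{l:inverse-limit-closed-im} then yields nonemptiness. Under (C3), $p_{kjl}$ restricts to a homomorphism of algebraic groups on $A_{lj}$, so the image $p_{kjl}(\widetilde{F}_l)$, being a translate of the image of such a homomorphism, is again closed by Remark~\ref{r:h1-h2}, and Lemma~\ref{l:inverse-limit-closed-im} applies here as well.

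With \eqref{e:pro-constructible-equality} established, the remaining claims are direct. By Chevalley's theorem, each $p_{ijk}(A_{kj})$ is a constructible subset of $A^{M^{i+j}}$, so $\Sigma_{ij}$ is countably-proconstructible as a countable intersection of such sets. Under (C2), each image is a complete subvariety by Remark~\ref{r:h1-h2}, hence closed, and the intersection of complete subvarieties is itself a complete subvariety. Under (C3), each image is an algebraic subgroup (Remark~\ref{r:h1-h2}), and an arbitrary intersection of algebraic subgroups is an algebraic subgroup. The main technical delicacy I expect is in case (C3), where the fibers $\widetilde{F}_k$ are cosets rather than subgroups, and one must verify that their images under further homomorphisms remain closed; this is precisely what Remark~\ref{r:h1-h2} is used to guarantee.
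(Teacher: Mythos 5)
Your proposal is correct and follows essentially the same route as the paper: the fibers $F_k = p_{ijk}^{-1}(x)\cap A_{kj}$ are exactly the sets $B_k$ of the paper's proof, the case-by-case nonemptiness of $\varprojlim_k F_k$ uses the same two lemmas in the same way (including the coset/translate observation under (C3)), and the concluding structural claims are obtained identically via Chevalley and Remark~\ref{r:h1-h2}. The only cosmetic differences are that you invoke Lemma~\ref{l:sft-hor} to produce $y\in\Sigma$ where the paper redoes that verification by hand, and that you conclude the completeness/subgroup statement by closure of intersections where the paper cites Noetherianity to stabilize the descending chain; both are equally valid.
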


\begin{proof} 
The inclusion $\Sigma_{ij} \subset \bigcap_{k \geq i} p_{ijk}(A_{k j})$ follows 
from Lemma \ref{l:sft-general}. 
\par 
Let $x \in \bigcap_{k \geq i} p_{ijk}(A_{k j}) \subset A^{M^{i+j}}$. 
We must show that $x$ can be extended to an element of $\Sigma$. 
Consider the following inverse system lying above $x$. 
Let $B_i=\{x\}$ and for each $k \geq i$, we set 
\begin{equation}
\label{e:b-ij}
B_{k} \coloneqq (p_{ijk})^{-1} (x) \subset A_{k j} ,    
\end{equation} 
which is a closed algebraic subset of $A^{M^{k+j}}$.  
Since $x \in  p_{ijk}(A_{k j})$, each set $B_{k}$ is nonempty. 
From \eqref{e:b-ij}, it is clear that for every $k \geq i$,  we have 
\[
\widetilde{p}_{kj}(B_{k+1} ) \subset B_k. 
\] 
By restricting the map $\widetilde{p}_{kj}$ to $B_{k+1}$, we have for each $k \geq i$ a well-defined algebraic map 
$\pi_k \colon B_{k+1} \to B_k$.  
Thus, we obtain an inverse subsystem $(B_k)_{k \geq i}$ of  
with transition maps $\pi_{nm} \colon B_m \to B_n$, where $m \geq n \geq i$, as compositions of the maps $\pi_k$. 
\par 
We claim that $\varprojlim_{k \geq i} B_k \neq \varnothing$.  
Indeed,  this follows from 
Lemma~\ref{l:inverse-limit-seq-const} if (H1) is satisfied  and from 
Lemma~\ref{l:inverse-limit-closed-im} and Remark~\ref{r:h1-h2} in the case of (C2). 
Suppose now that (C3) is satisfied. Since  $x \in  \bigcap_{k \geq i} p_{ijk}(A_{k j})$, 
there exists for each 
$k~\geq~i$ a point $z_k \in B_k$ such that $p_{ijk}(z_k)=x$. 
Let $V_k = \ker p_{ijk}$ be an algebraic subgroup of $A^{M^{k+j}}$ then clearly $B_k = z_k V_k$ 
where the group law is written multiplicatively. 
For all integers $m  \geq n \geq i$,  the map 
$\pi_{mn}$ is the restriction of a homomorphism of algebraic groups (cf.~Remark~\ref{r:full-system-homomorphism}). 
Therefore, $\pi_{nm}(B_m)$ is a translate of an algebraic subgroup of  $A^{M^{n+j}}$ 
and thus is Zariski closed in $B_n$ (cf.~Remark~\ref{r:h1-h2}). 
Hence, the claim follows, also in the case of (C3), from Lemma~\ref{l:inverse-limit-closed-im}. 
\par  
Therefore, we can find $(y_k)_{k \geq i} \in \varprojlim_{k \geq i} B_k$.   
Let $y \in A^{G}$ be defined as follows. Given $g \in G$,  set $y(g)= y_k(g)$ for any $k \geq i$  such that 
$g \in M^{k+j}$. Then $y$ is well-defined  by (Mem2). 
For each $g \in G$, choose $k \geq i$   so that $gD \subset M^{k+j}$. 
Then $(g^{-1}y)\vert_D = y_k\vert_{gD} \in W(K)$ which follows from  
the definition of $A_{kj}$ and since $y_k \in B_{kj} \subset A_{kj}$. 
Hence, $y \in \Sigma$. By construction, $x= y \vert_{M^{i+j}}$ and we deduce that 
$\bigcap_{k \geq i} q_{ijk}(A_{k j}) \subset \Sigma_{M^{i+j}}$. The proof of the formula 
\eqref{e:pro-constructible-equality} is completed. Thus, by Chevalley's theorem, 
$\Sigma_{i j}$ is a countably-proconstructible subset of $A^{M^{i+j}}$. 
\par
Finally, the last statement follows from \eqref{e:pro-constructible-equality} and Remark~\ref{r:h1-h2} 
and Noetherianity of the Zariski topology of $A^{M^{i+j}}$. Note that 
the sequence $(q_{ijk}(A_{k j}))_{k \geq i}$ is trivially a descending sequence.  
\end{proof}

\begin{corollary}
\label{c:sigma-pro-constructible} 
With the above notations and hypotheses, suppose that  
the condition $(\mathrm{H1})$ (resp.~$(\mathrm{C2})$, resp.~$(\mathrm{C3})$) is satisfied for $\Sigma$. 
Then, for each finite subset $E \subset G$, the restriction $\Sigma_E $  
is a countably-proconstructible subset (resp.~a complete subvariety, resp.~an algebraic subgroup) of $A^E$. 
\end{corollary}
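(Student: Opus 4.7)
The plan is to reduce the statement for an arbitrary finite $E \subset G$ to the special case $E = M^{i+j}$ already handled by Proposition \ref{p:sigma-pro-constructible}, by projecting from a sufficiently large $M^n$ that contains $E$.

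First, since $M$ is a generating set and $(M^n)_{n \in \N}$ exhausts $G$ by property (Mem2), the finite set $E$ is contained in $M^n$ for some $n \in \N$. Taking $i = n$ and $j = 0$, Proposition \ref{p:sigma-pro-constructible} gives that $\Sigma_{M^n}$ is countably-proconstructible (resp.\ a complete subvariety, resp.\ an algebraic subgroup) of $A^{M^n}$ under hypothesis (H1) (resp.\ (C2), resp.\ (C3)). Let $\pi \colon A^{M^n} \to A^E$ denote the canonical projection induced by the inclusion $E \subset M^n$; this is (the map on $K$-points of) an algebraic morphism (resp.\ a homomorphism of algebraic groups). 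Since restricting a configuration to $E$ can be performed by first restricting to $M^n$ and then projecting, one has $\Sigma_E = \pi(\Sigma_{M^n})$.

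It remains to verify the desired property of the image $\pi(\Sigma_{M^n})$ in each case. In case (C2), $\Sigma_{M^n}$ is a complete subvariety of $A^{M^n}$, so by Remark \ref{r:h1-h2} its image under the morphism $\pi$ is a complete subvariety of $A^E$. In case (C3), $\Sigma_{M^n}$ is an algebraic subgroup and $\pi$ is a homomorphism of algebraic groups, so by Remark \ref{r:h1-h2} again the image is an algebraic subgroup of $A^E$. In case (H1), Proposition \ref{p:sigma-pro-constructible} writes
\[
\Sigma_{M^n} \;=\; \bigcap_{k \geq n} p_{n 0 k}(A_{k 0}),
\]
a decreasing intersection of sets of $K$-points of constructible subsets of $V^{M^n}$ (constructibility following from Chevalley's theorem applied to the algebraic morphism $p_{n 0 k}$ and the closed subset $A_{k0}$). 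Applying Lemma \ref{l:pro-constructible} to the morphism $\pi$ and this decreasing sequence of constructibles yields
\[
\pi(\Sigma_{M^n}) \;=\; \bigcap_{k \geq n} \pi\bigl(p_{n 0 k}(A_{k 0})\bigr)(K),
\]
and each $\pi(p_{n 0 k}(A_{k 0}))$ is constructible in $V^E$ by Chevalley's theorem, giving the countably-proconstructibility of $\Sigma_E$.

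The only non-routine step is the case (H1): the images $\pi(C_k)$ of the defining constructibles are constructible, but one must exchange $\pi$ with the infinite intersection, and this is precisely where the uncountability of $K$ enters via Lemma \ref{l:pro-constructible}. The cases (C2) and (C3) reduce to the classical fact that morphisms of complete varieties (resp.\ homomorphisms of algebraic groups) have closed images.
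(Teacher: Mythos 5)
Your proposal is correct and follows essentially the same route as the paper's own proof: choose $n$ (equivalently $i,j$) large enough that $E \subset M^{i+j}$, write $\Sigma_E = \pi(\Sigma_{M^{i+j}})$ for the induced projection, and then handle (C2)/(C3) via Remark~\ref{r:h1-h2} and (H1) via Lemma~\ref{l:pro-constructible} together with Chevalley's theorem. Your closing remark correctly identifies the exchange of $\pi$ with the countable intersection as the only point where uncountability of $K$ is needed.
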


\begin{proof}
Let $i, j \in \N$ be large enough so that $E \subset M^{i+j}$. 
Let $\pi \colon A^{M^{i+j}} \to A^E$ be the induced projection. It follows that 
$\Sigma_E = \pi (\Sigma_{ij})$. 
In the cases (C2) and (C3), Proposition~\ref{p:sigma-pro-constructible} and 
Remark~\ref{r:h1-h2} imply that 
$\Sigma_E$ is respectively a complete subvariety and an algebraic subgroup of  $A^E$. 
In the case (H1),  we find by Lemma \ref{l:pro-constructible} that
\begin{align}
\Sigma_E = \pi (\Sigma_{ij}) = \pi \left(\bigcap_{k \geq i} p_{ijk} (A_{kj}) \right)
= \bigcap_{n \in \N} \pi (p_{ijk} (A_{kj}) ). 
\end{align} 
Hence, $\Sigma_E$ is countably-proconstructible 
by Chevalley's theorem. The proof is completed. 
\end{proof}

%SECTION 7
\section{Algebraic sofic subshifts}
\label{s:alg-sofic-shift} 

Consider the following hypothesis without condition on cellular automata: 

\begin{enumerate} 
\item [$(\mathrm{\widehat{H3}})$]
$K$ is algebraically closed, $V$ is a $K$-algebraic group, and $\Sigma \subset A^G$ is an algebraic group sofic subshift. 
\end{enumerate}
We can now state the main local result for algebraic sofic subshifts.   

\begin{theorem}
\label{t:sofic-pro-constructible}
Let $V$ be an algebraic variety over a field $K$ and let $A= V(K)$.
Let $G$ be a finitely generated group and let $\Sigma \subset A^G$ be an algebraic sofic subshift.
Let $E$ be a finite subset of $G$.
Suppose that the condition $(\mathrm{H1})$ (resp.~$(\mathrm{H2})$, resp.~$(\mathrm{\widehat{H3}})$) is satisfied.  
Then  the restriction $\Sigma_E \subset A^E$  
is  a countably-proconstructible subset (resp.~a complete subvariety,
resp. an algebraic subgroup) of  $A^E$.
\end{theorem}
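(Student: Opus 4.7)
The plan is to reduce the sofic case to the subshift of finite type case, which has already been handled by Corollary~\ref{c:sigma-pro-constructible}. By Definition~\ref{d:alg-sofic-shift}, there exist an algebraic variety (resp.~algebraic group) $U$ over $K$, with $B \coloneqq U(K)$, an algebraic (resp.~algebraic group) subshift of finite type $\Sigma' \subset B^G$, and an algebraic (resp.~algebraic group) cellular automaton $\tau' \colon B^G \to A^G$ such that $\Sigma = \tau'(\Sigma')$. Fix a memory set $N \subset G$ of $\tau'$ whose local defining map $\mu \colon B^N \to A$ is induced by an algebraic morphism $f \colon U^N \to V$ (a homomorphism of algebraic groups in case $(\mathrm{\widehat{H3}})$).

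Since $g N \subset E N$ for every $g \in E$, the local rule of $\tau'$ assembles into a well-defined map $\tau'_E \colon B^{E N} \to A^E$ determined by $\tau'_E(y)(g) = \mu((g^{-1}\tilde y)\vert_N)$ for any extension $\tilde y \in B^G$ of $y \in B^{EN}$. This $\tau'_E$ is the map on $K$-points induced by an algebraic morphism $\Phi \colon U^{E N} \to V^E$ (a homomorphism of algebraic groups in case $(\mathrm{\widehat{H3}})$) that is the product, indexed by $g \in E$, of the morphisms obtained from $f$ via the bijections $N \to g N$. A direct inspection shows that
\[
\Sigma_E = \tau'_E(\Sigma'_{E N}),
\]
since $x\vert_E = \tau'_E(y\vert_{EN})$ whenever $x = \tau'(y)$ with $y \in \Sigma'$.

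Applying Corollary~\ref{c:sigma-pro-constructible} to $\Sigma'$ (with $U$ playing the role of $V$) with the finite subset $E N \subset G$ gives that $\Sigma'_{E N}$ is a countably-proconstructible subset (resp.~a complete subvariety, resp.~an algebraic subgroup) of $B^{E N}$. It remains to show that these properties are preserved by $\Phi$. In case $(\mathrm{H2})$, the image of a complete subvariety under an algebraic morphism is complete, hence closed, by Remark~\ref{r:h1-h2}. In case $(\mathrm{\widehat{H3}})$, the image of an algebraic subgroup under a homomorphism of algebraic groups is an algebraic subgroup, again by Remark~\ref{r:h1-h2}. In case $(\mathrm{H1})$, write $\Sigma'_{E N} = \bigcap_{k \in \N} C_k(K)$ for some decreasing sequence $(C_k)_{k \in \N}$ of constructible subsets of $U^{E N}$; Lemma~\ref{l:pro-constructible} then yields
\[
\Sigma_E = \tau'_E(\Sigma'_{E N}) = \Phi\!\left(\bigcap_{k \in \N} C_k(K)\right) = \bigcap_{k \in \N} \Phi(C_k)(K),
\]
and each $\Phi(C_k)$ is constructible in $V^E$ by Chevalley's theorem, so $\Sigma_E$ is countably-proconstructible.

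The main technical hinge is Lemma~\ref{l:pro-constructible} in case $(\mathrm{H1})$, which crucially exploits the uncountability of $K$ in order to commute $\Phi$ with the countable intersection; this step would fail for an arbitrary algebraically closed field. The remaining two cases reduce cleanly to the standard facts on images under morphisms of complete varieties and under homomorphisms of algebraic groups recorded in Remark~\ref{r:h1-h2}.
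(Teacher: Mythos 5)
Your proposal is correct and follows essentially the same route as the paper: reduce to the finite-type case via Corollary~\ref{c:sigma-pro-constructible} applied to $\Sigma'$ on the window $EN$, push forward through the algebraic map induced by the local rule of $\tau'$, and conclude via Lemma~\ref{l:pro-constructible} together with Chevalley's theorem in case $(\mathrm{H1})$ and via Remark~\ref{r:h1-h2} in cases $(\mathrm{H2})$ and $(\mathrm{\widehat{H3}})$. The only difference is cosmetic: you construct the morphism $\Phi \colon U^{EN} \to V^E$ explicitly as a product of copies of $f$, where the paper simply cites \cite[Lemma~3.2]{ccp-2019} for the algebraicity of the induced map.
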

\begin{proof}
By hypothesis, there exist in the cases (H1) and (H2) 
an algebraic variety (resp. in the case $(\mathrm{\widehat{H3}})$ an algebraic group) $U$ over $K$, 
an algebraic (resp. algebraic group) cellular automaton $\tau' \colon B^G \to A^G$ where $B= U(K)$, 
and an algebraic (resp. algebraic group) subshift of finite type 
$\Sigma' \subset B^G$ such that $\Sigma = \tau' (\Sigma')$. 
Note that $U,V$ are complete varieties in the case (H2). 
Let $M$ be a memory set of $\tau'$. 
By Corollary~\ref{c:sigma-pro-constructible}, 
the set $\Sigma'_{ME}$ is countably-proconstructible. 
Hence, $\Sigma'_{ME} = \bigcap_{n \in \N} C_n$ where 
$(C_n)_{n \in \N}$ is some decreasing sequence of constructible subsets of $A^{ME}$. 
Let $\varphi \colon B^{ME} \to A^E$ be given by $\varphi(x)(g)= \tau'(y)(g)$ 
for every $x \in B^{ME}$, $g \in E$ and every $y \in B^G$ extending $x$. 
Then $\varphi$ is algebraic (cf.~\cite[Lemma~3.2]{ccp-2019}) and in the case $(\mathrm{\widehat{H3}})$, 
it is a homomorphism of algebraic groups (cf.~\cite[Lemma~3.4]{phung-2018}).  
In the case (H1), we can conclude by Chevalley's theorem since: 
\begin{align}
\Sigma_E = (\tau'(\Sigma'))_E = \varphi(\Sigma'_{ME}) 
= \varphi \left(\bigcap_{n \in \N} C_n \right) = \bigcap_{n \in \N} \varphi (C_n)
\end{align}
where the last equality follows from Lemma \ref{l:pro-constructible}. 
Finally, in the cases (H2) and $(\mathrm{\widehat{H3}})$, Corollary~\ref{c:sigma-pro-constructible} implies that 
$\Sigma_E   = \varphi (\Sigma'_{ME})$ is respectively a complete subvariety and  
an algebraic subgroup of  $A^E$. 
\end{proof}

% SECTION 8
\section{A closed mapping property and chain recurrent sets} 
\label{s:cip}
Using the space-time inverse system, we give a short proof of the 
following result saying that the image  of an algebraic 
sofic subshift under an algebraic cellular automaton is closed.
It extends the linear case in \cite[Theorem 4.1]{JPAA}  
\par 
Let $G$ be a group. 
Let $V_0, V_1$ be algebraic varieties over an algebraically field $K$. 
Let $A_0 = V_0(K)$ and let $A_1=V_1(K)$.  
Let 
$\tau \colon A_0^G \to A_1^G$ be an algebraic cellular automaton 
and let $\Sigma \subset A_0^G$ be an algebraic sofic subshift.  
\par 
Then $\Sigma$ is the image of some algebraic subshift of finite type $\Sigma' \subset B^G$ under 
an algebraic cellular automaton $\tau' \colon B^G \to A_0^G$, where 
$B$ is the set of  $K$-points of a $K$-algebraic variety $U$.    
\par 
To avoid notational confusion, we introduce in this section the following hypotheses similar to (H1), (H2), and (H3), $(\mathrm{\widehat{H3}})$. 
\begin{enumerate} 
\item [$(\mathrm{\widetilde{H1}})$]
$K$ is uncountable;  
\item [$(\mathrm{\widetilde{H2}})$]
$U, V_0$ are complete $K$-algebraic varieties; 
\item [$(\mathrm{\widetilde{H3}})$]
$U, V_0$, and $V_1$ are $K$-algebraic groups, 
$\Sigma' \subset B^G$ is an algebraic group subshift of finite type, 
$\tau' \colon  B^G \to A_0^G$ and 
$\tau \colon  A_0^G \to A_1^G$ are algebraic group cellular automata.
\end{enumerate}

\begin{theorem}
\label{t:closed-image}
With the above notations, 
if one of the conditions $(\mathrm{\widetilde{H1}})$, $(\mathrm{\widetilde{H2}})$, $(\mathrm{\widetilde{H3}})$ is satisfied, 
then $\tau(\Sigma)$ is closed in $A_1^G$.  
\end{theorem}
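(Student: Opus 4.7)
The plan is to absorb the cellular automaton $\tau'$ into $\tau$, reduce to $G$ finitely generated, and then run a space-time-style inverse-limit argument analogous to the one in Section~\ref{s:space-time-system} to produce a preimage for any configuration in the closure of $\tau(\Sigma)$. To begin, I would apply Lemma~\ref{l:restriction-cip} to reduce to the case where $G$ is finitely generated, choosing a finitely generated subgroup $H \subset G$ containing memory sets of $\tau$, $\tau'$, and $\Sigma'$. Setting $\widetilde{\tau} \coloneqq \tau \circ \tau' \colon B^G \to A_1^G$, which is an algebraic (resp.~algebraic group) cellular automaton, we have $\tau(\Sigma) = \widetilde{\tau}(\Sigma')$, so it suffices to prove that the image of an algebraic (resp.~algebraic group) subshift of finite type under an algebraic (resp.~algebraic group) cellular automaton is closed.

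Next, I would fix a finite $M \subset G$ that is simultaneously a memory set of $\widetilde{\tau}$ and a defining memory set of $\Sigma'$, with $1_G \in M = M^{-1}$ generating $G$, so that $(M^n)_n$ exhausts $G$. Let $y \in \overline{\widetilde{\tau}(\Sigma')}$; the prodiscrete topology on $A_1^G$ forces $y\vert_{M^n} \in \widetilde{\tau}(\Sigma')_{M^n}$ for every $n \in \N$. For each $n \in \N$, set
\[
F_n \coloneqq \Sigma'_{M^{n+1}} \cap \widetilde{\tau}_n^{-1}\bigl(y\vert_{M^n}\bigr) \subset B^{M^{n+1}},
\]
where $\widetilde{\tau}_n \colon B^{M^{n+1}} \to A_1^{M^n}$ is the algebraic (resp.~algebraic group) morphism extracted from the local defining map of $\widetilde{\tau}$. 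Each $F_n$ is nonempty since any $z \in \Sigma'$ with $\widetilde{\tau}(z)\vert_{M^n} = y\vert_{M^n}$ yields $z\vert_{M^{n+1}} \in F_n$. By Corollary~\ref{c:sigma-pro-constructible}, $\Sigma'_{M^{n+1}}$ is countably-proconstructible in case $(\mathrm{\widetilde{H1}})$, a complete subvariety in case $(\mathrm{\widetilde{H2}})$, and an algebraic subgroup in case $(\mathrm{\widetilde{H3}})$; intersecting with the closed fiber $\widetilde{\tau}_n^{-1}(y\vert_{M^n})$ (a translate of $\ker \widetilde{\tau}_n$ in case $(\mathrm{\widetilde{H3}})$) preserves each property.

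The restriction transition maps $\pi_n \colon F_{n+1} \to F_n$ are algebraic (resp.~algebraic group) morphisms, and in cases $(\mathrm{\widetilde{H2}})$, $(\mathrm{\widetilde{H3}})$, $\pi_n(F_{n+1})$ is closed in $F_n$ by Remark~\ref{r:h1-h2}. Applying Lemma~\ref{l:inverse-limit-seq-const} in case $(\mathrm{\widetilde{H1}})$ and Lemma~\ref{l:inverse-limit-closed-im} in cases $(\mathrm{\widetilde{H2}})$, $(\mathrm{\widetilde{H3}})$ yields a compatible family $(x_n) \in \varprojlim_n F_n$. Gluing via (Mem1)-(Mem2) gives $z \in B^G$ with $z\vert_{M^{n+1}} = x_n$; the subshift-of-finite-type condition is checked at each $g \in G$ by choosing $n$ large enough that $gM \subset M^{n+1}$, so $z \in \Sigma'$, and $\widetilde{\tau}(z) = y$ holds since $\widetilde{\tau}_n(x_n) = y\vert_{M^n}$ for all $n$. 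Hence $y \in \widetilde{\tau}(\Sigma') = \tau(\Sigma)$.

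The hardest step is verifying the categorical conditions in case $(\mathrm{\widetilde{H1}})$: one must check that $F_n$ is genuinely countably-proconstructible (by combining the countably-proconstructible $\Sigma'_{M^{n+1}}$ with the closed fiber via Chevalley's theorem) and that all hypotheses of Lemma~\ref{l:inverse-limit-seq-const} are met verbatim. By contrast, cases $(\mathrm{\widetilde{H2}})$ and $(\mathrm{\widetilde{H3}})$ are essentially formal once one invokes Remark~\ref{r:h1-h2}, which guarantees that images under morphisms of complete varieties and under homomorphisms of algebraic groups remain closed, so that Lemma~\ref{l:inverse-limit-closed-im} applies directly.
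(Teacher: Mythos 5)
Your proposal is correct and follows essentially the same route as the paper's proof: absorb $\tau'$ by composition to reduce to an algebraic subshift of finite type, reduce to a finitely generated group via Lemma~\ref{l:restriction-cip}, and then apply Lemma~\ref{l:inverse-limit-seq-const} (case $(\mathrm{\widetilde{H1}})$) or Lemma~\ref{l:inverse-limit-closed-im} (cases $(\mathrm{\widetilde{H2}})$, $(\mathrm{\widetilde{H3}})$, using the translate-of-a-kernel observation) to the inverse system of nonempty fibers $\Sigma'_{M^{n+1}} \cap \widetilde{\tau}_n^{-1}(y\vert_{M^n})$. The only difference is the order of the two reductions, which is immaterial.
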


\begin{proof}
It is clear that, up to replacing $\tau$ by  the composition 
$\tau \circ \tau'$ and $\Sigma$ by $\Sigma'$, we can suppose without 
loss of generality that $\Sigma$ is an algebraic subshift of finite type. 
The hypotheses $(\mathrm{\widetilde{H2}})$, $(\mathrm{\widetilde{H3}})$ 
now become respectively: 
\begin{enumerate} 
\item [$(\mathrm{P2})$]
$V_0$ is a complete $K$-algebraic variety; 
\item [$(\mathrm{P3})$]
$V_0$, and $V_1$ are $K$-algebraic groups, 
$\Sigma \subset A_0^G$ is an algebraic group subshift of finite type, and 
$\tau \colon  A_0^G \to A_1^G$ is an algebraic group cellular automaton.
\end{enumerate}

Let $D \subset G$ be a defining memory set of $\Sigma$. 
Let $d \in A_1^G$ be in the closure of $\tau(\Sigma)$.  
We must show that $d \in \tau(\Sigma)$. 
\par 
Suppose first that $G$ is finitely generated. 
Let $M \subset G$ be a finite memory subset of $\tau$ containing $\{1_G\} \cup D$ 
which generates $G$ and satisfies $M=M^{-1}$. 
Consider the inverse system $(A_0^{M^i})_{i\in \N}$ 
whose transition maps 
$p _{ij}\colon A_0^{M^j} \to A_0^{M^i}$, where $0 \leq i \leq j$, are defined as 
the canonical projections induced by the inclusions $M^i \subset M^j$.  
For every $i \geq 1$, 
the induced map $q_i \colon A_0^{M^i} \to A_1^{M^{i-1}}$ is given as follows. 
For every $\sigma \in A_0^{M^i}$, 
we set  $q_i(\sigma) \coloneqq (\tau(x))\vert_{M^{i-1}}$ where  $x \in A_0^G$ is 
 any configuration that extends $\sigma$. 
For every $i \geq 1$, we define 
\[ 
Z_{i} \coloneqq   q_i^{-1}(d\vert_{M^{i-1}}) \cap \Sigma_{M^i}.  
\] 
\par 
Since $d$ belongs to the closure of $\tau(\Sigma)$ in $A_1^G$, 
it follows that $Z_i \neq \varnothing$ for every $i \geq 1$. 
By restricting the projections $p_{ij} \colon A^{M^j} \to A^{M^i}$ 
to the  $Z_j$'s, we obtain well-defined transition maps $\pi_{ij} \colon Z_j \to Z_i$, 
where $j \geq i \geq 1$, of the inverse system $(Z_i)_{i \geq 1}$. 
\par
It suffices to show that $\varprojlim_{i \geq 1} Z_i \neq \varnothing$
since, by construction of  $Z_i$ and $\Sigma_{ij}$ (see also
\cite[Lemma
2.1]{ccp-2019}), we have $\tau(c) = d$ for every $c \in \varprojlim_{i \geq 1}
Z_i \subset \varprojlim_{i \geq 1} \Sigma_{M^{i+1}} = \Sigma $ (by \eqref{e:row-limit} since $\Sigma$ is closed 
as it is a subshift of finite type).      
 \par 
Thanks to Theorem~\ref{t:sofic-pro-constructible},  the 
 conclusion follows by a direct application of  
 Lemma~\ref{l:inverse-limit-seq-const}, resp. 
 Lemma~\ref{l:inverse-limit-closed-im}, to the inverse system $(Z_i)_{i \in \N}$ 
 if $(\mathrm{\widetilde{H1}})$, resp. (P2), is satisfied.  
 Assume now that (P3) is satisfied.  
For each $i \geq 1$, choose $z_i \in Z_i$ and 
let $V_i \coloneqq \ker q_i  \cap \Sigma_{M^i}$ 
be an algebraic subgroup of $\Sigma_{M^i}$ (by Theorem~\ref{t:sofic-pro-constructible}). 
We have $Z_i= z_i V_i$. Hence (by Remark~\ref{r:full-system-homomorphism}), 
for $j \geq i \geq 1$,  
$\pi_{ij}(Z_j) $ 
is  a translate of an algebraic subgroup of $\Sigma_{M^i}$ and thus is Zariski closed in $Z_i$. 
Therefore, the case (P3) follows from Lemma  \ref{l:inverse-limit-closed-im}. 
\par
For a general group $G$,  consider a finite memory set $M$ of 
$\tau$ containing $\{1_G\} \cup D$ and such that $M=M^{-1}$. 
Let $H \subset G$ be the subgroup generated by $M$. 
As $\Sigma_H$ is clearly an algebraic (resp. in the case $(\mathrm{\widetilde{H3}})$ an algebraic group) 
subshift of finite type, 
the above discussion shows that $\tau_H(\Sigma_H)$ 
is closed in $A_1^G$ and so is $\tau(\Sigma)$ by Lemma  \ref{l:restriction-cip}. 
\end{proof}

\begin{corollary} 
\label{c:sofic-closed}
Let $G$ be a group.  
Let $V$ be an algebraic variety over a field $K$ and 
let $A \coloneqq V(K)$. 
Let $\Sigma \subset A^G$ be an algebraic sofic subshift. 
If one of the conditions $(\mathrm{H1})$, $(\mathrm{H2})$, $(\mathrm{\widehat{H3}})$ is satisfied, 
then $\Sigma$ is closed in $A^G$.  
\end{corollary}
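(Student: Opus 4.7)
The plan is to reduce Corollary \ref{c:sofic-closed} directly to Theorem \ref{t:closed-image} by applying it with the identity cellular automaton as the outer map. Concretely, I would write $\Sigma = \tau'(\Sigma')$ for some algebraic (resp.\ algebraic group) subshift of finite type $\Sigma' \subset B^G$ with $B = U(K)$ and some algebraic (resp.\ algebraic group) cellular automaton $\tau' \colon B^G \to A^G$, as supplied by Definition \ref{d:alg-sofic-shift}. Then I would take $V_0 = V_1 = V$, $A_0 = A_1 = A$, and $\tau \coloneqq \mathrm{id}_{A^G}$ in the setting of Theorem \ref{t:closed-image}.

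The only verification needed is that $\mathrm{id}_{A^G}$ is itself an algebraic cellular automaton (and an algebraic group cellular automaton when $V$ is an algebraic group). This is immediate: one chooses the memory set $M = \{1_G\}$ and the local defining map $\mu \colon V \to V$ equal to the identity morphism, which is obviously a morphism of algebraic varieties, and a homomorphism of algebraic groups if $V$ is one. With this choice, the hypotheses $(\mathrm{H1})$, $(\mathrm{H2})$, $(\mathrm{\widehat{H3}})$ of the corollary translate term by term into the hypotheses $(\mathrm{\widetilde{H1}})$, $(\mathrm{\widetilde{H2}})$, $(\mathrm{\widetilde{H3}})$ of Theorem \ref{t:closed-image} (since $V_0 = V$, and since the completeness/algebraic group structure on $V_1 = V_0$ and on $\tau = \mathrm{id}$ is automatic).

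Theorem \ref{t:closed-image} then gives that $\tau(\Sigma) = \Sigma$ is closed in $A_1^G = A^G$, which is the desired conclusion. There is no real obstacle here: all the hard work of running the space-time inverse system and invoking Lemma \ref{l:inverse-limit-seq-const} or Lemma \ref{l:inverse-limit-closed-im} is already carried out in the proof of Theorem \ref{t:closed-image}; the corollary is just the special case where the outer cellular automaton is trivial.
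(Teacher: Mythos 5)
Your proposal is correct and is essentially identical to the paper's own proof, which likewise applies Theorem~\ref{t:closed-image} with $V_0=V_1=V$ and $\tau=\Id_{A^G}$. The extra verifications you supply (that the identity is an algebraic, resp.\ algebraic group, cellular automaton and that the hypotheses match up) are routine and consistent with what the paper leaves implicit.
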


\begin{proof}
It suffices to apply Theorem~\ref{t:closed-image} in the case $V_0=V_1$ 
to the identity map $\tau = \Id_{A^G}$ where $A = V_0(K) = V_1(K)$. 
\end{proof}

\begin{corollary} 
\label{c:cr-subset-omega}
With the notations and hypotheses as in Theorem~\ref{t:limit-set-not-empty}, we have  
$\Crec (\tau) \subset \Omega(\tau)$. 
\end{corollary}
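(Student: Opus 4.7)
The plan is to derive this as a direct application of Proposition \ref{p:chain-recurrent} to the pair $(X,f) = (\Sigma,\tau)$. That proposition requires three inputs: (a) $\Sigma$ is a Hausdorff uniform space; (b) $\tau$ is uniformly continuous; (c) $\tau^n(\Sigma)$ is closed in $\Sigma$ for every $n \in \N$. Points (a) and (b) are essentially free: $\Sigma$ inherits the prodiscrete uniform structure from the Hausdorff space $A^G$, and every cellular automaton is uniformly continuous by definition.

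The real content is point (c). First, for $n=0$ there is nothing to prove. For $n \geq 1$, I would observe that $\tau$ extends by hypothesis to an algebraic (resp.\ algebraic group) cellular automaton $\widetilde\tau \colon A^G \to A^G$, so the iterate $\widetilde\tau^n$ is again an algebraic (resp.\ algebraic group) cellular automaton (compositions of algebraic morphisms are algebraic, and compositions of homomorphisms of algebraic groups are homomorphisms). Restricting, $\tau^n \colon \Sigma \to \Sigma$ is again an algebraic (resp.\ algebraic group) cellular automaton.

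I would then invoke Theorem \ref{t:closed-image} with $V_0 = V_1 = V$ and with $\tau$ replaced by $\widetilde\tau^n$: the hypothesis $(\mathrm{H1})$, $(\mathrm{H2})$, or $(\mathrm{H3})$ on $(\Sigma,\tau)$ translates respectively into $(\widetilde{\mathrm{H1}})$, $(\widetilde{\mathrm{H2}})$, $(\widetilde{\mathrm{H3}})$ on the data $(U,V_0,V_1,\Sigma',\tau',\widetilde\tau^n)$ coming from the algebraic sofic presentation of $\Sigma$. This yields $\tau^n(\Sigma) = \widetilde\tau^n(\Sigma)$ closed in $A^G$. Since $\Sigma$ itself is closed in $A^G$ by Corollary \ref{c:sofic-closed}, the set $\tau^n(\Sigma)$ is in particular closed in $\Sigma$, establishing (c).

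With (a), (b), (c) in hand, Proposition \ref{p:chain-recurrent} applied to $\tau \colon \Sigma \to \Sigma$ gives $\Crec(\tau) \subset \Omega(\tau)$. There is no real obstacle: the only subtlety is making sure the iterates $\tau^n$ remain algebraic (resp.\ algebraic group) cellular automata so that Theorem \ref{t:closed-image} continues to apply, which is automatic from the closure of the algebraic (resp.\ algebraic group) structure under composition.
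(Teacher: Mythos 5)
Your proposal is correct and follows essentially the same route as the paper: both reduce the statement to Proposition~\ref{p:chain-recurrent} by verifying uniform continuity of $\tau$ and closedness of $\tau^n(\Sigma)$, the latter via the fact that $\tau^n$ is again an algebraic (resp.\ algebraic group) cellular automaton together with Theorem~\ref{t:closed-image}. You merely spell out the composition-closure step more explicitly than the paper does.
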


\begin{proof}
By Proposition \ref{p:chain-recurrent}, we only need to check that 
$\tau^n(\Sigma)$ is closed in $A^G$ for every $n \geq 1$ and 
that $\tau$ is uniformly continuous. The first property follows from 
Theorem \ref{t:closed-image}. The second is a general property of cellular automata 
already mentioned in the Introduction.   
\end{proof}

%SECTION AJOUTÉE 
\section{Applications to backward orbits and limit sets}  
Thanks to the closedness property of algebraic sofic subshifts, we can establish the following key relation between 
inverse space-time systems, backward orbits and limit sets.   
\begin{theorem} 
\label{t:space-time-ls-alg}
Let $V$ be an algebraic variety over a field $K$ and let $A= V(K)$. 
Let $G$ be a finitely generated group and let $\Sigma \subset A^G$ 
be an algebraic sofic subshift.  
Let $\tau \colon \Sigma \to \Sigma$ be an algebraic cellular automaton.  
Assume that one of the conditions $(\mathrm{H1})$, 
$(\mathrm{H2})$, $(\mathrm{H3})$ is satisfied. 
Then, with the notations as in Section \ref{s:space-time-system}, 
we have a surjective map $\Phi \colon \varprojlim_{i,j \in \N} \Sigma_{ij}  \to \Omega(\tau)$. 
\end{theorem}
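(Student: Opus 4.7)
The map $\Phi$ from Lemma~\ref{l:space-time-ls} sends a backward orbit $(x_j)_{j\geq 0}$ of $\tau$ to its initial term $x_0$, so surjectivity amounts to showing that every $x \in \Omega(\tau)$ admits a backward orbit in $\Sigma$. My plan is to encode the condition ``after $j$ iterations, the restriction to $M^i$ agrees with $x|_{M^i}$'' as an inverse subsystem $(T_{ij})$ of the space-time inverse system, verify that $(T_{ij})$ inherits the countably-proconstructible, complete, or algebraic-group structure from $(\Sigma_{ij})$ according to the hypothesis, and then invoke the appropriate inverse-limit lemma from Section~\ref{s:countably-pro-constructible}.

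Let $\widetilde{\tau}\colon A^G \to A^G$ be an algebraic (resp.\ algebraic group) cellular automaton extending $\tau$ with memory set $M$ as in Section~\ref{s:space-time-system}. Define $Q_i^j \colon A^{M^{i+j}} \to A^{M^i}$ by $Q_i^j(\sigma) \coloneqq \widetilde{\tau}^j(y)|_{M^i}$ for any $y \in A^G$ extending $\sigma$; a direct induction using the memory property of $\widetilde{\tau}$ shows this is well-defined and algebraic (and, in case (H3), a homomorphism of algebraic groups), with $Q_i^j|_{\Sigma_{ij}}$ equal to the $j$-fold composition $q_{i,0}\circ q_{i,1}\circ\cdots\circ q_{i,j-1}$. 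Set
\[
T_{ij} \coloneqq (Q_i^j)^{-1}(x|_{M^i}) \cap \Sigma_{ij}.
\]
Since $x \in \Omega(\tau) \subset \tau^j(\Sigma)$, some $y \in \Sigma$ satisfies $\tau^j(y) = x$, whence $y|_{M^{i+j}} \in T_{ij}$; in particular every $T_{ij}$ is nonempty. A routine check using $Q_i^j\circ p_{ij}=Q_{i+1}^j|_{M^i\text{ via restriction}}$ and $Q_i^j\circ q_{ij}=Q_i^{j+1}$ gives $p_{ij}(T_{i+1,j}) \subset T_{ij}$ and $q_{ij}(T_{i,j+1}) \subset T_{ij}$, so $(T_{ij})_{i,j \in \N}$ is an inverse subsystem of $(\Sigma_{ij})$.

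By Theorem~\ref{t:sofic-pro-constructible}, the restriction $\Sigma_{ij}=\Sigma_{M^{i+j}}$ is countably-proconstructible in case (H1), a complete subvariety of $A^{M^{i+j}}$ in case (H2), and an algebraic subgroup in case (H3). The fibre $(Q_i^j)^{-1}(x|_{M^i})$ is Zariski closed and, in case (H3), a coset of an algebraic subgroup. Hence $T_{ij}$ is countably-proconstructible in (H1), a closed (therefore complete) subvariety of $\Sigma_{ij}$ in (H2), and a coset of an algebraic subgroup in (H3). The transition maps of $(T_{ij})$ are restrictions of algebraic morphisms (of group homomorphisms in (H3)); in (H2) their images are closed by completeness, and in (H3) they are cosets of algebraic subgroups, hence closed. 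Therefore Lemma~\ref{l:inverse-limit-seq-const} applies in case (H1) and Lemma~\ref{l:inverse-limit-closed-im} in cases (H2)--(H3), yielding $\varprojlim_{i,j\in\N} T_{ij} \neq \varnothing$.

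Finally, any element $(\sigma_{ij})$ of this limit corresponds, via the identification of $\varprojlim_{i,j}\Sigma_{ij}$ with the set of backward orbits of $\tau$ (Section~\ref{s:space-time-system}), to a sequence $(x_j)_{j\geq 0}$ in $\Sigma$ with $\tau(x_{j+1})=x_j$ and $x_j|_{M^{i+j}}=\sigma_{ij}$. Because $Q_i^0$ is the identity, $T_{i,0}=\{x|_{M^i}\}$, forcing $x_0|_{M^i}=x|_{M^i}$ for every $i$, i.e.\ $x_0=x$. Thus $\Phi((x_j))=x$, proving surjectivity. The main technical obstacle is identifying, in each of the three hypotheses, the correct algebraic or proconstructible structure on $T_{ij}$ and verifying the closed-image condition for the transition maps; in particular, case (H3) crucially exploits that $Q_i^j$ is a homomorphism of algebraic groups, so that $T_{ij}$ becomes a coset and the images of the transition maps are Zariski closed.
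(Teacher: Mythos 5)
Your proposal is correct and follows essentially the same route as the paper: your sets $T_{ij}=(Q_i^j)^{-1}(x|_{M^i})\cap\Sigma_{ij}$ are exactly the paper's $B_{ij}=(q_{i0}\circ\cdots\circ q_{i,j-1})^{-1}(y_0|_{M^i})$, the nonemptiness argument via a preimage $y_j\in\Sigma$ with $\tau^j(y_j)=y_0$ is identical, and the case analysis (countably-proconstructible for (H1) via Lemma~\ref{l:inverse-limit-seq-const}, complete subvarieties for (H2) and cosets of algebraic subgroups with Zariski closed transition images for (H3) via Lemma~\ref{l:inverse-limit-closed-im}) matches the paper's use of Theorem~\ref{t:sofic-pro-constructible}. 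The only cosmetic difference is that you make the globally defined morphism $Q_i^j$ on $A^{M^{i+j}}$ explicit, which the paper leaves implicit.
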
 

\begin{proof}
By Corollary~\ref{c:sofic-closed}, the subshift $\Sigma$ is closed in $A^G$. 
Hence, $\varprojlim_{i,j \in \N} \Sigma_{ij}$ 
is the set of backward orbits of $\tau$ and 
we have a canonical map 
$\Phi \colon \varprojlim_{i,j \in \N} \Sigma_{ij}  \to \Omega(\tau)$ given in Lemma \ref{l:space-time-ls}. 
Now let $y_0 \in \Omega(\tau) \subset \Sigma$. 
We must show that there exists $x \in \varprojlim_{i,j \in \N} \Sigma_{ij} $ such that $\Phi(x)=y_0$.  
For every $i, j \in \N$, define a closed subset 
\[
B_{ij}  \coloneqq (q_{i 0} \circ \dots \circ q_{i,j-1})^{-1} (y_0\vert_{M^{i}}) \subset \Sigma_{ij}. 
\] 
By definition  of $\Omega(\tau)$, there exists for every $j \in \N$ 
an element $y_j \in \Sigma$ such that $\tau^j (y_j)=y_0$. 
Hence, it follows from the definition of the transition maps $q_{ik}$ 
and of $\Sigma_{ij}$ that $y_j\vert_{M^{i+j}} \in B_{ij}$. 
In particular, $B_{ij} \neq \varnothing$ for every $i, j \in \N$. 
By restricting the transition maps of the space-time inverse system $(\Sigma_{ij})_{i, j \in \N}$ 
to the sets $B_{ij}$, we obtain a well-defined inverse subsystem $(B_{ij})_{i, j \in \N}$.
\par 
We claim that $\varprojlim_{i, j \in \N} B_{ij} \neq \varnothing$. 
Indeed, by Theorem~\ref{t:sofic-pro-constructible}, 
the case (H1) is implied by Lemma~\ref{l:inverse-limit-seq-const}. 
In the case (H2), Theorem~\ref{t:sofic-pro-constructible} 
implies that $B_{ij}$ is a complete algebraic subvariety of $\Sigma_{ij}$ and thus of $A^{M^{i+j}}$.  
Hence, the case (H2) follows from Lemma~\ref{l:inverse-limit-closed-im} and Remark~\ref{r:h1-h2}. 
In the case (H3),  a similar argument as in the proof of Proposition~\ref{p:sigma-pro-constructible} 
shows that the transition maps of the system $(B_{ij})_{i, j \in \N}$ have Zariski closed images. 
Therefore, the case (H3) follows immediately from Lemma~\ref{l:inverse-limit-closed-im}.  
Thus, we can find   
\[
x  \in \varprojlim_{i, j \in \N} B_{ij}  \subset \varprojlim_{i, j \in \N} \Sigma_{ij}. 
\] 
It is clear from the constructions of the inverse system $(B_{ij})_{i, j \in \N}$ 
and of the map $\Phi$ (see the proof of Lemma \ref{l:space-time-ls}) 
that $\Phi(x)= y_0$. 
The proof of the lemma is completed. 
\end{proof}

\begin{corollary}
\label{c:omega-completely-invariant} 
With the notations and hypotheses as in Theorem \ref{t:limit-set-not-empty}, 
we have $\tau(\Omega(\tau))= \Omega(\tau)$. 
\end{corollary}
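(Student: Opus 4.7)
The plan is to deduce the equality from Theorem \ref{t:space-time-ls-alg}, after first reducing to the case where $G$ is finitely generated via the restriction technique of Section \ref{s:restriction-ls}.

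The inclusion $\tau(\Omega(\tau)) \subset \Omega(\tau)$ is immediate and was already observed in the Introduction: if $y \in \tau^n(\Sigma)$ for every $n \geq 1$, then $\tau(y) \in \tau^{n+1}(\Sigma)$ for every $n \geq 1$, so $\tau(y) \in \Omega(\tau)$.

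For the reverse inclusion, I would first reduce to the finitely generated case. Let $H \subset G$ be the subgroup generated by a finite set containing both a memory set of the algebraic sofic subshift $\Sigma$ (regarded as a subshift of sub-finite-type) and a memory set of $\tau$; then $H$ is finitely generated. The restriction $\Sigma_H \subset A^H$ is again an algebraic (resp.~algebraic group) sofic subshift, and $\tau_H \colon \Sigma_H \to \Sigma_H$ is an algebraic (resp.~algebraic group) cellular automaton satisfying the same hypothesis among (H1), (H2), (H3) as $\tau$. Under the product decomposition $\tau = \prod_{c \in G/H} \tau_c$, with each $\tau_c$ conjugate to $\tau_H$, Lemma \ref{l:restriction-ls}(i) gives $\Omega(\tau) = \Omega(\tau_H)^{G/H}$. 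Hence $\tau(\Omega(\tau)) = \Omega(\tau)$ will follow from the corresponding equality $\tau_H(\Omega(\tau_H)) = \Omega(\tau_H)$ for the restricted system.

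Now assume $G$ is finitely generated, and let $y_0 \in \Omega(\tau)$. I invoke Theorem \ref{t:space-time-ls-alg}, which provides a surjective map $\Phi \colon \varprojlim_{i,j \in \N} \Sigma_{ij} \to \Omega(\tau)$. Via the identification \eqref{e:backward-orbits}, any preimage of $y_0$ under $\Phi$ amounts to a backward orbit $(y_j)_{j \geq 0}$ in $\Sigma$ satisfying $\tau(y_{j+1}) = y_j$ for every $j \geq 0$; in particular $\tau(y_1) = y_0$. It remains to check that $y_1 \in \Omega(\tau)$, which is immediate since $y_1 = \tau^n(y_{n+1}) \in \tau^n(\Sigma)$ for every $n \geq 0$. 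This yields $y_0 \in \tau(\Omega(\tau))$ and closes the argument.

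The substantive content is already packaged in Theorem \ref{t:space-time-ls-alg}, so no real obstacle remains; the only thing to verify is that the restricted data $(\Sigma_H, \tau_H)$ remains within the algebraic (resp.~algebraic group) framework, and this is essentially immediate from the construction in Section \ref{s:restriction-ls}, with the local defining data being preserved by restriction.
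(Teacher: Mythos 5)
Your proposal is correct and follows essentially the same route as the paper: reduce to the finitely generated case via the factorization $\tau = \prod_{c \in G/H} \tau_c$ and Lemma~\ref{l:restriction-ls}, then use the surjectivity of $\Phi$ from Theorem~\ref{t:space-time-ls-alg} to produce a backward orbit of a given $y_0 \in \Omega(\tau)$ and observe that its first term $y_1$ lies in $\Omega(\tau)$. Your explicit verification that $y_1 = \tau^n(y_{n+1}) \in \tau^n(\Sigma)$ for all $n$ is exactly the point the paper leaves implicit when it asserts the existence of $z \in \Omega(\tau)$ with $\tau(z) = y$.
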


\begin{proof} 
Let $M\subset G$ be a finite subset 
containing $1_G$, a memory set of $\tau$ and a memory set of $\Sigma$ 
and  such that $M=M^{-1}$.  
 Let $H \subset G$ be the subgroup generated by $M$. 
Since $\tau = \prod_{c \in G/H} \tau_c$ and $\Omega(\tau)= \prod_{c \in G /H} \Omega(\tau_c)$ 
(cf. Lemma \ref{l:restriction-ls}), we can suppose without loss of generality that $G=H$. 
Let $x \in \Omega(\tau)$ then $x \in \tau^n(X)$ for every $n \geq 0$.  
Thus $\tau(x) \in \tau^{n+1}(X)$ for every $n \geq 0$ and it follows that $\tau(x) \in \Omega(\tau)$. 
Therefore, $\tau(\Omega(\tau)) \subset \Omega(\tau)$. 
For the converse inclusion, let $y \in \Omega(\tau)$. By Theorem~\ref{t:space-time-ls-alg}, 
there exists $x = (x_{ij}) \in \varprojlim_{i,j \in \N} \Sigma_{ij}$ such that $ \Phi(x)=y$. 
On the other hand, \eqref{e:backward-orbits} tells us that $\Phi^{-1}(y) \subset 
\varprojlim_{i,j \in \N} \Sigma_{ij}$ is the set of backward orbits of $y$ under $\tau$. 
Hence, we can find $z \in \Omega(\tau)$ such that $\tau(z)= y$. Thus,  
$\Omega(\tau) \subset \tau(\Omega(\tau))$ and the conclusion follows. 
\end{proof}

%SECTION 9
\section{Noetherianity of algebraic subshifts of finite type} 

The goal of this section is to establish the following characterization of 
algebraic subshifts of finite type by the descending chain property. 
It extends the linear version in \cite[Theorem 1.1 and Corollary 1.2]{JPAA}.
The proof is an application of Theorem~\ref{t:sofic-pro-constructible} 
combined with the construction of an inverse system analogous to the space-time inverse system.  
More precisely, we obtain: 

\begin{theorem} 
\label{t:noetheiran-alg-sft}
Let $G$ be a finitely generated group and let $V$ be an algebraic variety 
(resp. an algebraic group) over an algebraically closed field $K$.  
Let $A=V(K)$ and let $\Sigma \subset A^G$ be a subshift. 
Consider the following properties: 
\begin{enumerate} [\rm (a)] 
\item 
$\Sigma$ is a subshift of finite type; 
\item 
$\Sigma$ is an algebraic (resp. algebraic group) subshift of finite type; 
\item 
every descending sequence of  algebraic (resp. algebraic group) sofic subshifts of $A^G$ 
\begin{equation*} 
\Sigma_0 \supset \Sigma_1 \supset \cdots \supset \Sigma_n \supset \Sigma_{n+1} \supset  \cdots  
\end{equation*} 
such that $\bigcap_{n \geq 0} \Sigma_n = \Sigma$ eventually stabilizes. 
\end{enumerate}
Then we have $\mathrm{(b)}$$\implies$$\mathrm{(a)}$$\implies$$\mathrm{(c)}$. 
Moreover, if $\Sigma \subset A^G$ is an algebraic (resp. algebraic group) sofic subshift, 
then $\mathrm{(a)}$$\iff$$\mathrm{(b)}$$\iff$$\mathrm{(c)}$. 
 \end{theorem}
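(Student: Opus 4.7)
I would prove the chain of implications (b)$\implies$(a)$\implies$(c) in general, and then establish (a)$\implies$(b) under the additional assumption that $\Sigma$ is algebraic (resp.~algebraic group) sofic, which closes the three-way equivalence.

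The implication (b)$\implies$(a) is immediate from Definition~\ref{d:alg-sft}, since every $\Sigma(D, W(K))$ is an SFT with defining pair $(D, W(K))$. The extra implication (a)$\implies$(b) under algebraic soficity of $\Sigma$ is also short: by Lemma~\ref{l:base-change-sft}, $\Sigma = \Sigma(D, \Sigma_D)$, and Theorem~\ref{t:sofic-pro-constructible} shows that $\Sigma_D$ is a complete subvariety, resp.~an algebraic subgroup, of $V^D$. In particular it is Zariski-closed, so $\Sigma_D = W(K)$ for an algebraic subvariety (resp.~subgroup) $W \subset V^D$, whence $\Sigma = \Sigma(D, W(K))$ is an algebraic (resp.~algebraic group) subshift of finite type.

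For the main implication (a)$\implies$(c), let $M$ be a symmetric finite generating set of $G$ with $1_G \in M$, and fix $i_0 \in \N$ such that the defining memory set $D$ of the SFT $\Sigma$ satisfies $D \subset M^{i_0}$, so that $\Sigma = \Sigma(M^{i_0}, \Sigma_{M^{i_0}})$ by Lemma~\ref{l:base-change-sft}. Given a descending sequence $\Sigma_0 \supset \Sigma_1 \supset \cdots$ of algebraic (group) sofic subshifts with $\bigcap_n \Sigma_n = \Sigma$, Theorem~\ref{t:sofic-pro-constructible} shows that each $(\Sigma_n)_{M^{i_0}}$ is the set of $K$-points of a closed subvariety, resp.~an algebraic subgroup, of $V^{M^{i_0}}$. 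By Noetherianity of the Zariski topology on $V^{M^{i_0}}$, the descending chain $((\Sigma_n)_{M^{i_0}})_n$ stabilizes at some index $N$. Granting the key equality $(\Sigma_N)_{M^{i_0}} = \Sigma_{M^{i_0}}$ (addressed below), for $n \geq N$ we conclude
\[
\Sigma \subset \Sigma_n \subset \Sigma(M^{i_0}, (\Sigma_n)_{M^{i_0}}) = \Sigma(M^{i_0}, \Sigma_{M^{i_0}}) = \Sigma,
\]
so $\Sigma_n = \Sigma$ and the chain stabilizes.

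The main obstacle is establishing the equality $\bigcap_n (\Sigma_n)_{M^{i_0}} = \Sigma_{M^{i_0}}$; the inclusion $\supset$ is trivial, but the reverse requires a delicate inverse-limit extension argument since $A$ is typically noncompact. For $p \in \bigcap_n (\Sigma_n)_{M^{i_0}}$, I would introduce the doubly-indexed family $T_n^{(j)} \coloneqq \{x|_{M^j} : x \in \Sigma_n,\ x|_{M^{i_0}} = p\} \subset A^{M^j}$ for $n \in \N$ and $j \geq i_0$. Theorem~\ref{t:sofic-pro-constructible} identifies each $T_n^{(j)}$ with a Zariski-closed subset of $V^{M^j}$ (a translate of an algebraic subgroup, or a closed subvariety, depending on the case). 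Fiberwise Noetherianity in $n$ for fixed $j$ stabilizes the chain $(T_n^{(j)})_n$ to a nonempty closed set $T^{(j)}$, and a further application of Noetherianity to the fibers of each projection $\pi \colon V^{M^{j'}} \to V^{M^j}$ shows that the induced maps $T^{(j')} \to T^{(j)}$ are surjective. Lemma~\ref{l:inverse-limit-closed-im} applied to the inverse system $(T^{(j)})_{j \geq i_0}$ then produces a coherent family, equivalently an element $y \in A^G$ with $y|_{M^{i_0}} = p$ and $y|_{M^j} \in (\Sigma_n)_{M^j}$ for every $j$ and $n$; since each $\Sigma_n$ is closed in $A^G$ by Corollary~\ref{c:sofic-closed}, this forces $y \in \bigcap_n \Sigma_n = \Sigma$, hence $p \in \Sigma_{M^{i_0}}$. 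In the uncountable case (H1), countably-proconstructible sets replace Zariski-closed subvarieties throughout, and Lemma~\ref{l:inverse-limit-seq-const} replaces Lemma~\ref{l:inverse-limit-closed-im}.
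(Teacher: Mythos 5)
Your implications (b)$\implies$(a), (a)$\implies$(c), and (a)$\implies$(b) (the last under soficity of $\Sigma$) are all essentially sound, and your proof of (a)$\implies$(c) is a valid reorganization of the paper's argument: the paper runs a single doubly-indexed inverse system and invokes Lemma~\ref{l:inverse-limit-closed-im} at the end, whereas you first stabilize in $n$ for each fixed window $M^j$ by Noetherianity and then pass to the inverse limit in $j$; both routes work, and your explicit derivation of (a)$\implies$(b) from Lemma~\ref{l:base-change-sft} and Theorem~\ref{t:sofic-pro-constructible} is correct (a point the paper leaves implicit).

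There is, however, a genuine logical gap: the implications you establish do not close the three-way equivalence. From (b)$\implies$(a)$\implies$(c) together with (a)$\implies$(b) you get (a)$\iff$(b) and (a)$\implies$(c), but nothing in your proposal yields (c)$\implies$(a). Since (c) is a universally quantified statement over descending chains, extracting (a) from it requires exhibiting a \emph{specific} descending chain whose stabilization forces $\Sigma$ to be of finite type. The paper does this with the canonical approximations $\Sigma_n \coloneqq \Sigma(M^n, \Sigma_{M^n})$: Theorem~\ref{t:sofic-pro-constructible} shows each $\Sigma_n$ is an algebraic (resp.\ algebraic group) subshift of finite type, closedness of $\Sigma$ (Corollary~\ref{c:sofic-closed}) together with \eqref{e:row-limit} gives $\bigcap_n \Sigma_n = \Sigma$, and stabilization of this chain forces $\Sigma = \Sigma_n$ to be of finite type. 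This argument is entirely absent from your proposal and must be added. A secondary issue: your closing remark that in case (H1) one may ``replace Zariski-closed subvarieties by countably-proconstructible sets throughout'' cannot work, because the descending chain condition you invoke fails for countably-proconstructible sets; the Noetherian stabilization steps genuinely require the restrictions $(\Sigma_n)_{M^j}$ to be Zariski closed, i.e.\ the complete-variety or algebraic-group setting in which the paper actually applies this theorem.
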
 

\begin{proof} 
It is trivial that (b)$\implies$(a).   
Assume that  $\Sigma$ is a subshift of finite type. 
Hence, $\Sigma = \Sigma(D, W)$ where $D \subset G$ is finite and 
$W \subset A^D$ is some subset. 
Let $\Sigma_0 \supset \Sigma_1 \supset \cdots$ 
be a descending sequence of  algebraic (resp. algebraic group) sofic subshifts of $A^G$ 
whose intersection is $\Sigma$. 
Let $M \subset G$ be a finite generating subset containing $\{1_G\} \cup D$ and such that $M=M^{-1}$.  
Consider the inverse system $(X_{ij})_{i,j \in \N}$ defined by $X_{ij} \coloneqq (\Sigma_j)_{M^{i}} \subset A^{M^i}$. 
Remark that $X_{i,j+1} \subset X_{ij}$ since $\Sigma_{j+1} \subset \Sigma_j$ for all $i, j \in \N$. 
We define the unit transition maps $p_{ij} \colon X_{i+1, j} \to X_{ij}$ by 
$p_{ij}(x)= x\vert_{M^{i}}$ for every $x \in X_{i+1, j}$ 
and $q_{ij} \colon X_{i, j+1} \to X_{ij}$ simply as the inclusion maps. 
\par 
For all $i, j \in \N$, Theorem~\ref{t:sofic-pro-constructible} 
implies that every $X_{ij}$ is a complete variety (resp. an algebraic group) over $K$. 
By Noetherianity of the Zariski topology, the decreasing sequence 
$(X_{0j})_{j \in \N}$ of algebraic closed subsets of $A^{M}$ eventually stabilizes, 
say, $X_{0j}= X_{0m}$ for all $j \in \N$ for some $m \in \N$. 
Let $W' \coloneqq X_{0m}$ then $\Sigma' \coloneqq \Sigma(M^m, W')$ is an algebraic 
(resp. algebraic group)  subshift of finite type.  
It is clear that $\Sigma_m \subset \Sigma'$  
and hence $\Sigma \subset \Sigma'$. We shall prove the converse inclusion. 
\par 
Let $w \in  W'$. 
We construct an inverse subsystem $(Z_{ij})_{i \geq m, j \geq 0}$ of $(X_{ij})_{i \geq m, j \geq 0}$ as follows. 
For $i \geq m$, 
let $Z_{i0} \coloneqq \{ x \in  X_{i0} \colon x\vert_{M^m} = w\}$ 
which is clearly an algebraic closed subvariety (resp. a translate of an algebraic subgroup) of $X_{i0}$. 
For $i \geq m, j \geq 0$, we define an algebraic closed subvariety 
(resp. a translate of an algebraic subgroup (by Theorem~\ref{t:sofic-pro-constructible})) of $X_{ij}$ as follows: 
\[
Z_{ij} \coloneqq ( q_{i0}  \circ  \dots \circ q_{i, j-1})^{-1} (Z_{i0}) \subset X_{ij}. 
\]
\par 
The transition maps of $(Z_{ij})_{i \geq m, j  \geq 0}$ are well-defined as the restrictions of the transition maps of 
the system $(X_{ij})_{i \geq m, j \geq 0}$. 
These transition maps have Zariski closed images (by Remark~\ref{r:h1-h2}). 
\par 
By our construction, each $Z_{ij}$ is  clearly nonempty. 
Hence, Lemma~\ref{l:inverse-limit-closed-im} implies that 
there exists $x = (x_{ij})_{i \geq m,j \geq 0} \in \varprojlim Z_{ij}$. 
Let $y \in A^G$ be defined by $y(g)= x_{i0}(g)$ for every $g \in G$ and 
any large enough $i\geq m$ such that $g \in M^{i}$. 
Observe that $x_{ij} = x_{ik}$ for every $i \geq m$ and $0 \leq j \leq k$  
since the vertical transition maps $X_{ik} \to X_{ij}$ are simply inclusions. 
Consequently, for every $n \in \N$, we have $y \in \Sigma_n$ by \eqref{e:row-limit} since $\Sigma_n$ is closed in $A^G$ (cf.~Corollary~\ref{c:sofic-closed}). 
Hence $y \in \Sigma$. 
By construction, $y\vert_{M^m} = w$. 
Since $w$ was arbitrary, this shows that $W' \subset \Sigma_{M^m}$. 
Hence, $\Sigma' = \Sigma(M^m, W') \subset \Sigma(M^m, \Sigma_{M^m}) = \Sigma$. 
The last equality follows from Lemma~\ref{l:base-change-sft} as $D \subset M^m$.  
Therefore, $\Sigma' = \Sigma $ and  
$\Sigma_n= \Sigma$ for all $n \geq m$. This proves that (a)$\implies$(c).  
\par
Suppose now that $\Sigma \subset A^G$ is an algebraic (resp. algebraic group)
sofic subshift which is not a subshift of finite type. 
Let $M \subset G$ be a finite generating subset containing $\{1_G\}$ such that $M=M^{-1}$. 
For every $n \in \N$, consider $W_{n} \coloneqq \Sigma_{M^n}$  
(as in Section~\ref{s:space-time-system}). 
Theorem~\ref{t:sofic-pro-constructible} tells us that $W_n$ is a complete algebraic subvariety 
(resp. an algebraic subgroup) of $A^{M^n}$. 
Set $\Sigma_n \coloneqq \Sigma(M^n, W_n)$ for every $n \in \N$ then 
$\Sigma_n$ is an algebraic (resp. algebraic group) subshift of finite type.  
As $(\Sigma_{M^{n+1}})_{M^n} = \Sigma_{M^n}$, 
it is clear that $\Sigma \subset \Sigma_{n+1} \subset \Sigma_n$ for every $n \in \N$. 
We claim that $\Sigma = \bigcap_{n \in \N} \Sigma_n$. 
Indeed, we only need to prove that $\bigcap_{n \in \N} \Sigma_n \subset \Sigma$. 
Let $x \in \bigcap_{n \in \N} \Sigma_n$. Then by definition of $\Sigma_n$, 
we find that $x\vert_{M^n} \in W_n= \Sigma_{M^n}$ for every $n \in \N$. 
Thus, since $\Sigma$ is closed (cf.~Corollary~\ref{c:sofic-closed}), $x \in \varprojlim_{n \in \N} \Sigma_{M^n} = \Sigma$ (cf.~\eqref{e:row-limit}) and hence 
$\bigcap_{n \in \N} \Sigma_n \subset \Sigma$. 
However, the descending sequence $(\Sigma_n)_{n \in \N}$ cannot stabilize since otherwise the subshift  
$\Sigma$ would be  of finite type. 
This shows that (c)$\implies$(a) 
if $\Sigma$ is an algebraic (resp. algebraic group)
sofic subshift. The proof is complete.  
\end{proof}

% SECTION 10 
\section{Proof of Theorem~\ref{t:limit-set-not-empty}} 
 For (i), we know that $\Omega(\tau)$ is $G$-invariant by the $G$-equivariance of $\tau$.
On the other hand,
as the set of algebraic cellular automata over $\Sigma$ is  
closed under the composition of maps (cf.~\cite[Proposition~3.3]{ccp-2019} 
for the case of full shifts, the general case is proved similarly), 
the map $\tau^n \colon \Sigma  \to \Sigma$ is an algebraic cellular automaton for every $n \geq 1$. 
It then follows from Theorem~\ref{t:closed-image} 
that $\tau^n(\Sigma)$ is closed in $A^G$ for every $n \geq 1$ 
and thus $\Omega(\tau) = \bigcap_{n \geq 1} \tau^n(\Sigma)$ is also closed in $A^G$. 
This shows that $\Omega(\tau)$ is a closed subshift of $A^G$ and (i) is proved. 
\par 
For (v), let $z \in \Sigma \cap \Fix(H)$ for some subgroup $H \subset G$. 
Let $F \subset G$ be the subgroup generated by a finite subset $M$ containing $\{1_G\}$ and memory sets of 
$\tau$, $\Sigma$,  
and such that $M=M^{-1}$. 
Note that $z\vert_F$ is fixed by the subgroup $R \coloneqq F \cap H$ of $F$. 
Consider the space-time inverse system $(\Sigma_{ij})_{i, j \in \N}$ associated with 
the restriction $\tau_F$ and $M$ as in Definition~\ref{d:space-time-system}. 
Keep the notations in Section \ref{s:space-time-system}. For all $i, j \in \N$, let 
\begin{equation*}
\label{e:delta-r}
\Delta_{ij} \coloneqq \{ p \in A^{M^{i+j}} \colon p(u)=p(v) \mbox{ for all } u, v \in M^{i+j} \mbox{ with } uv^{-1} \in R \} 
\end{equation*}
be the restriction to $M^{i+j}$ of $R$-fixed points in $A^H$. 
Clearly, $\Delta_{ij}$ is respectively 
a closed subvariety, a complete algebraic subvariety,  
an algebraic subgroup of $A^{M^{i+j}}$ in the cases (H1), (H2), (H3).  
Define also $Z_{ij} \coloneqq \Sigma_{ij} \cap \Delta_{ij} \subset A^{M^{i+j}}$. 
\par
Note that $\tau_F$ sends $R$-fixed points to $R$-fixed points.   
Hence, by restricting the transition maps to the sets $Z_{ij}$, 
we obtain a well-defined inverse subsystem of 
$(\Sigma_{ij})_{i,j \in \N}$. 
Theorem~\ref{t:sofic-pro-constructible} implies that respectively in each case 
(H1), (H2), (H3), the set $Z_{ij} \subset A^{M^{i+j}}$ is a countably-proconstructible subset, 
a complete algebraic subvariety, an algebraic subgroup. 
Each $Z_{ij}$ is nonempty since it contains $z\vert_{M^{i+j}}$. 
Hence, Lemma~\ref{l:inverse-limit-seq-const} and Lemma~\ref{l:inverse-limit-closed-im} imply that  
there exists $x \in \varprojlim_{i,j \in \N} Z_{ij} \subset  \varprojlim_{i,j \in \N} \Sigma_{ij}$. 
By Lemma~\ref{l:space-time-ls}, we obtain $y= \Phi(x) \in \Omega(\tau_F) \subset A^H$. 
By our construction, $y$ is fixed by $R$. 
By Lemma~\ref{l:restriction-ls}.(i), $\Omega(\tau) = \Omega(\tau_F)^{G/F}$.
Thus  $y$ induces a configuration of $\Omega(\tau)$ fixed by $H$. 
This proves (v). 
 \par 
To finish the proof of Theorem~\ref{t:limit-set-not-empty}, note that (ii) follows from   
Corollary~\ref{c:omega-completely-invariant} and (iii) follows from the general property 
$\Per(\tau) \subset \Rec(\tau) \subset \NW(\tau) \subset \Crec(\tau)$ and from the inclusion 
$\Crec(\tau) \subset \Omega(\tau) $ proved in Corollary~\ref{c:cr-subset-omega}. 
Finally, in the case (H2) (resp. in the case (H3)), 
(iv) is a direct consequence of the implication (a)$\implies$(c) in Theorem~\ref{t:noetheiran-alg-sft} applied 
to $\Omega(\tau)$ and the decreasing sequence of algebraic (resp. algebraic group) sofic subshifts 
$\tau(\Sigma) \supset \tau^2(\Sigma) \supset \cdots \supset \tau^n(\Sigma) \supset \tau^{n+1}(\Sigma) \supset \cdots$ 
which satisfies $\bigcap_{n \geq 1} \tau^n(\Sigma) \eqqcolon \Omega(\tau)$  by definition. 
\qed

%SECTION 11
\section{Proof of Theorem \ref{t:char-nilpotent-alg-ca}}

 It is clear that (a)$\implies$(b). 
For the converse implication, suppose that $\Omega(\tau)= \{x_0\}$ for some $x_0 \in \Sigma$. 
As $\Omega(\tau)$ is $G$-invariant, there exists en element denoted by $0 \in A$ 
such that $x_0(g)= 0$ for every $g \in G$, i.e., $x_0=0^G$.  
Let $M\subset G$ be a finite subset containing 
a memory set of $\tau$ and a memory set of $\Sigma$ such that $1_G \in M$ and $M=M^{-1}$. 
Let $H$ be the subgroup generated by $M$ and consider the restriction $\tau_H$. 
By Lemma~\ref{l:restriction-ls}, we deduce that $\Omega(\tau_H)$ must be a singleton as well and 
$\tau$ is nilpotent if $\tau_H$ is. Thus, up to replacing $G$ by $H$, we can suppose that 
$G$ is generated by $M$. 
\par 
We construct an inverse subsystem $(\Sigma^*_{ij})_{i, j \in \N}$ 
of the space-time inverse system $(\Sigma_{ij})_{i, j \in \N}$ 
associated with $\tau$ and the memory set $M$  
(cf. Definition \ref{d:space-time-system}) as follows. 
Let $\Sigma^*_{i0} \coloneqq \Sigma_{i0} \setminus \{x \in \Sigma_{i0} \colon x(1_G)=0\}$ 
for every  $i \geq 0$. For all $i \geq 0$ and $j \geq 1$, we  define  
\[
\Sigma^*_{ij}= (q_{i 0} \circ \dots \circ q_{i, j-1})^{-1}(\Sigma_{i0}^*). 
\] 
The unit transition maps $q^*_{ij} \colon \Sigma^*_{i, j+1} \to \Sigma^*_{i j}$ 
and $p^*_{ij} \colon \Sigma^*_{i+1, j} \to \Sigma^*_{ij}$ of the inverse subsystem $(\Sigma^*_{ij})_{i, j \in \N}$
are defined respectively by the restrictions of the transition maps 
$q_{ij}$ and $p_{ij}$ of $(\Sigma_{ij})_{i, j \in \N}$. 
\par 
Assume on the contrary that $\tau$ is not nilpotent. 
We claim that $\Sigma^*_{ij} \neq \varnothing$ for all $i, j \in \N$. 
Otherwise, $\Sigma^*_{ij}= \varnothing$ for some $i, j \in \N$. 
If $j=0$ then $\Sigma^*_{i0}= \varnothing$, that is, $x(1_G) = 0$ for
all $x \in \Sigma_{i0}$, and since $\Sigma$ is $G$-invariant and
$1_G \in M^{i+j}$, we deduce that $\Sigma = \{0^G\}$.
Hence, $\tau$ is trivially nilpotent and we arrive at a contradiction. 
Thus, $j \geq 1$ and by definition  of $\Sigma_{ij}$, we have for every $x \in \Sigma_{ij}$ that 
\[
(q_{i 0} \circ \dots \circ q_{i, j-1})(x)(1_G)=0.
\] 
Since  $\tau^{i+j}$ is $G$-equivariant, 
it follows that $\tau^{i+j}(x)=0^G$ for every $x \in \Sigma$, 
which contradicts the assumption that $\tau$ is not nilpotent. 
This proves the claim, i.e., 
$\Sigma^*_{ij} \neq \varnothing$ for all $i, j \in \N$.  
We are going to show that 
\begin{align}
\label{e:inverse-space-time-nonempty}
\varprojlim_{i,j \in \N} \Sigma^{*}_{ij} \neq \varnothing. 
\end{align}
Indeed, \eqref{e:inverse-space-time-nonempty} is a direct application of 
Theorem~\ref{t:sofic-pro-constructible} and Lemma \ref{l:inverse-limit-seq-const} 
to the inverse system $(\Sigma^*_{ij})_{i, j \in \N}$ in the case (H1). 
For the cases (H2) and (H3), observe that 
for every $(i, j) \prec(k, l)$ in $\N^2$, we have 
\begin{align}
\label{e-complete-space-time}  
Z\coloneqq F_{(i,j), (k,l)} (\Sigma^*_{kl}) = F_{(i,j), (k,l)}(\Sigma_{kl}) \cap \Sigma^*_{ij},  
\end{align}
where $F_{(i,j), (k,l)} \colon \Sigma_{kl} \to \Sigma_{ij}$ 
is the transition map of the inverse system $(\Sigma_{ij})_{i, j \in \N}$. 
Indeed, by definition  of $\Sigma^*_{kl}$ and $\Sigma^*_{ij}$, 
and using the equality $F_{(i,0),(k,l)} = F_{(i,0),(i,j)} \circ F_{(i,j), (k,l)}$, 
we see that  
\begin{align*}
F_{(i,j), (k,l)} (\Sigma^*_{k,l}) & = F_{(i,j), (k,l)} (\Sigma_{k,l} \setminus F^{-1}_{(i,0),(k,l)}(\Sigma_{i,0} \setminus \Sigma^*_{i,0}) ) \\
& \supset  F_{(i,j), (k,l)} (\Sigma_{kl})  \setminus F_{(i,j), (k,l)}(F^{-1}_{(i,0),(k,l)}(\Sigma_{i0} \setminus A^*_{i0}) ) \\ 
& \supset  F_{(i,j), (k,l)} (\Sigma_{kl})  \setminus  F^{-1}_{(i,0),(i,j)}(\Sigma_{i0} \setminus \Sigma^*_{i0}) \\
& =  F_{(i,j), (k,l)} (\Sigma_{kl})  \setminus (\Sigma_{ij} \setminus \Sigma^*_{ij}) \\
& =  F_{(i,j), (k,l)} (\Sigma_{kl})  \cap \Sigma^*_{ij}. 
\end{align*} 
But clearly $F_{(i,j), (k,l)} (\Sigma^*_{kl})  \subset F_{(i,j), (k,l)} (\Sigma_{kl})  \cap \Sigma^*_{ij}$, and 
\eqref{e-complete-space-time} is proved. 
\par 
In the cases (H2) and (H3), the set $F_{(i,j), (k,l)}(\Sigma_{kl})$ is closed in $\Sigma_{ij}$ 
by Remark~\ref{r:h1-h2}, Remark~\ref{r:full-system-homomorphism}, and Theorem~\ref{t:sofic-pro-constructible}. 
We infer from \eqref{e-complete-space-time} 
that $F_{(i,j), (k,l)} (\Sigma^*_{kl})$ is a Zariski closed subset of $\Sigma^*_{ij}$. 
Therefore, $\varprojlim_{i, j \in \N} \Sigma^{*}_{ij} \neq \varnothing$ results from 
Lemma~\ref{l:inverse-limit-closed-im} 
and \eqref{e:inverse-space-time-nonempty} is proved in all cases. 
\par 
We can thus choose $x =(x_{ij})_{i, j \in \N} \in \varprojlim_{i,j \in \N} \Sigma^{*}_{ij}$. 
Let $\Phi \colon \varprojlim_{i,j \in \N} \Sigma_{ij} \to \Omega(\tau)$ 
be the map given in Theorem~\ref{t:space-time-ls-alg}. 
As $\varprojlim_{i,j \in \N} \Sigma^{*}_{ij} \subset \varprojlim_{i,j \in \N} \Sigma_{ij}$, 
we obtain $y_0 = \Phi (x) \in \Omega(\tau)$. 
As $y_0(1_G) = x_{00}(1_G)$ by definition of $\Phi$ 
and as $x_{00}(1_G) \neq 0$ since $x_{00} \in \Sigma^*_{00}$, 
we deduce that $\Omega(\tau) \neq \{0^G\}$. 
This contradiction shows that (b)$\implies$(a).  
\qed

%SECTION 12 
\section{Nilpotency over finite alphabets}  
 The following theorem strengthens and extends to any infinite group some results
established for full shifts over $G = \Z$  by
Culik, Pachl, and Yu \cite[Theorem 3.5]{culik-limit-sets-1989}
and by Guillon and Richard \cite[Corollary 4]{guillon-richard-2008}. 
\par 
Suppose that $X$ is a topological space equipped with a continuous action of a group $G$. 
One says that the dynamical  system $(X,G)$ is 
\emph{topologically mixing} if
for each pair of nonempty open subsets $U$ and $V$ of $X$ there exists a
finite subset
$F \subset G$ such that $U \cap gV \neq \varnothing$ for all $g \in G
\setminus F$.
Given a group $G$ and a finite set $A$, a closed subshift $\Sigma
\subset A^G$ is said to be \emph{topologically mixing}
provided $(\Sigma,G)$ is topologically mixing. 
If $(X,G)$ is a topologically mixing dynamical system and $f
\colon X \to X$ is a continuous $G$-equivariant map, then the factor system $(f(X), G)$ is
also topologically mixing.

\begin{theorem}
\label{t:finit-limit-set}
Let $G$ be an infinite group, let $A$ be a finite set, and let $\Sigma
\subset A^G$ be a nonempty
topologically mixing subshift of sub-finite-type (e.g., $\Sigma = A^G$, or, if $G$ is finitely generated, $\Sigma$ is of
finite type).
Let $\tau \colon \Sigma \to \Sigma$ be a cellular automaton.
Then the following conditions are equivalent:

\begin{enumerate}[{\rm (a)}]
\item $\tau$ is nilpotent;
\item the limit set $\Omega(\tau)$ is reduced to a single configuration; 
\item the limit set $\Omega(\tau)$ is finite.
\end{enumerate}
If $G$ is finitely generated, then the above conditions are equivalent
to
\begin{enumerate}[{\rm (d)}]
\item the limit set $\Omega(\tau)$ consists only of periodic
configurations.
\end{enumerate}
\end{theorem}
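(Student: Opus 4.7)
The implications (a)$\Rightarrow$(b)$\Rightarrow$(c) are immediate, and (a)$\Rightarrow$(d) is clear since a nilpotent $\tau$ has $\Omega(\tau)$ reduced to a $G$-fixed (hence constant, hence periodic) configuration; conversely (b)$\Rightarrow$(a) follows from Theorem~\ref{t:char-nilpotent-alg-ca}, since the finite alphabet $A$ can be realized as the set of $K$-points of a zero-dimensional (hence complete) variety over an algebraically closed field $K$, putting us in hypothesis $(\mathrm{H2})$ with $\tau$ trivially algebraic. The substantive content is therefore (c)$\Rightarrow$(b), plus (d)$\Rightarrow$(c) when $G$ is finitely generated.

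\textbf{Reducing to the finitely generated case.} Let $H$ be the finitely generated subgroup of $G$ generated by a memory set of $\tau$ together with a memory set of $\Sigma$. By Lemma~\ref{l:restriction-ls} one has $\Omega(\tau)=\Omega(\tau_H)^{G/H}$ and nilpotency of $\tau$ reduces to that of $\tau_H\colon\Sigma_H\to\Sigma_H$; topological mixing of $\Sigma$ under $G$ passes to $\Sigma_H$ under $H$ by pulling clopens back through the restriction map and intersecting the cofinite good set with $H$. From $|\Omega(\tau)|=|\Omega(\tau_H)|^{|G/H|}$, finiteness of $\Omega(\tau)$ either forces $\Omega(\tau_H)$ to be a singleton (and we are immediately done) or forces $G/H$ to be finite, in which case $G$ is itself finitely generated. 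So it suffices to treat finitely generated $G$.

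\textbf{Finite limit set yields a clopen partition.} Assume $G$ is finitely generated. Any finite closed $G$-invariant subset $\Omega\subset A^G$ is then a subshift of finite type: intersecting the finite-index stabilizers of the configurations of $\Omega$ gives a normal finite-index subgroup $H'\subset G$ with $\Omega\subset\Fix(H')$, and a defining window $D$ is obtained as the union of a transversal of $G/H'$ with a finite generating set of $H'$ (which exists since a finite-index subgroup of a finitely generated group is finitely generated); one checks $\Omega=\Sigma(D,\Omega|_D)$. Theorem~\ref{t:limit-set-not-empty}(iv) under $(\mathrm{H2})$ then gives that $\tau$ is stable, i.e.~$\tau^n(\Sigma)=\Omega(\tau)$ for all $n\gg 0$. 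Since the finite Hausdorff space $\Omega(\tau)$ is discrete, the continuous co-restriction $\tau^n\colon\Sigma\to\Omega(\tau)$ yields a finite clopen partition
\[
\Sigma=\bigsqcup_{i=1}^m C_i,\qquad C_i:=(\tau^n)^{-1}(y_i),
\]
where $\Omega(\tau)=\{y_1,\dots,y_m\}$.

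\textbf{Mixing forces a singleton; remarks on (d).} Suppose $m\geq 2$ and fix distinct $y_1,y_2$. Topological mixing of $\Sigma$ gives $C_1\cap gC_2\neq\varnothing$ for all but finitely many $g\in G$. For any $w\in C_1\cap gC_2$, $\tau^n(w)=y_1$ and $\tau^n(g^{-1}w)=y_2$; the $G$-equivariance of $\tau^n$ forces $g^{-1}y_1=y_2$, i.e.~$gy_2=y_1$. Thus $\{g\in G:gy_2=y_1\}$ is cofinite in $G$. This set is either empty (if $y_1\notin Gy_2$) or a single coset of $\Stab(y_2)$; cofiniteness in the infinite group $G$ forces $\Stab(y_2)=G$, making $y_2$ a constant configuration and yielding $y_1=y_2$, a contradiction. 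Hence $m=1$. For (d)$\Rightarrow$(c) with $G$ finitely generated, the strategy is parallel: by Marshall Hall's theorem only finitely many subgroups of $G$ have any given finite index, so periodic configurations with stabilizer indices bounded by a fixed $N$ form a finite set; one bounds the stabilizer indices occurring in $\Omega(\tau)$ via a compactness argument together with a clopen/mixing scheme akin to the above. The main obstacle throughout is the stability step that promotes set-theoretic finiteness of $\Omega(\tau)$ to the equality $\tau^n(\Sigma)=\Omega(\tau)$, since this is what makes the clopen partition, and hence the whole mixing argument, available.
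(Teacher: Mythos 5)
Your treatment of the equivalences (a)$\iff$(b)$\iff$(c) follows essentially the same route as the paper: (b)$\iff$(a) via Theorem~\ref{t:char-nilpotent-alg-ca} under $(\mathrm{H2})$, the restriction $\Omega(\tau)=\Omega(\tau_H)^{G/H}$ to dispose of the non-finitely-generated case, the observation that a finite invariant closed set over a finitely generated group is of finite type, stability via the descending chain theorem, and then topological mixing to collapse $\Omega(\tau)$ to a point. Your mixing step (clopen partition $\Sigma=\bigsqcup_i(\tau^n)^{-1}(y_i)$ and equivariance forcing $\{g:gy_2=y_1\}$ to be cofinite) is a correct, mildly different packaging of the paper's argument, which instead notes that $\Omega(\tau)=\tau^{n_0}(\Sigma)$ is itself a topologically mixing factor and applies mixing to the open singletons inside $\Omega(\tau)$; the two are interchangeable. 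Two small repairs are needed in this part. First, in the non-finitely-generated case you must invoke $\Omega(\tau)\neq\varnothing$ (Theorem~\ref{t:limit-set-not-empty}) to upgrade ``at most one point'' to ``a singleton.'' Second, your defining window for the finite subshift is wrong as stated: taking $D$ to be the \emph{union} of a transversal of $G/H'$ with a generating set of $H'$ does not support the propagation argument, because verifying $\omega_{gs}=s^{-1}\omega_g$ requires the window to contain the \emph{translates} of a separating set by generators of $G$ (not of $H'$). The correct choice, as in Lemma~\ref{l:claim1-finite}, is a product $D=SD_0$ with $S$ a symmetric generating set of $G$ and $D_0$ a finite set separating the points of $\Omega$.

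The genuine gap is (d)$\implies$(c). You propose to bound the stabilizer indices occurring in $\Omega(\tau)$ ``via a compactness argument together with a clopen/mixing scheme akin to the above,'' but that scheme is not available here: the clopen partition of $\Sigma$ into fibres of $\tau^n$ exists only \emph{after} one knows $\Omega(\tau)$ is finite (finiteness gives the finite-type property, hence stability, hence discreteness of the image), which is precisely what (d)$\implies$(c) must establish. A bare compactness argument does not bound orbit sizes either; at best, writing $\Omega(\tau)$ as the increasing union of the finite closed sets $\Omega(\tau)\cap\bigl(\bigcup_{[G:H]\leq N}\Fix(H)\bigr)$ and applying Baire category produces an isolated point, not finiteness. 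The statement you need --- a closed subshift over a finitely generated group with finite alphabet all of whose configurations are periodic is finite --- is a nontrivial theorem; the paper handles this implication by citing \cite[Theorem~5.8]{ballier-phd-2009} and \cite[Theorem~1.4]{meyerovitch-salo-2019} (and \cite[Theorem~3.8]{ballier-durand-jeandel-2008} for $G=\Z^2$). As written, your argument for (d)$\implies$(c) would have to reprove that result, and the strategy you sketch does not do so.
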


Before starting the proof of the above theorem, we present a 
preliminary lemma. The result is probably well known, but since we could not find any reference,
we include a proof for the sake of completeness.

\begin{lemma}
\label{l:claim1-finite}
Let $G$ be a finitely generated group, let $A$ be a set, and let
$\Sigma \subset A^G$ be a finite subshift.
Then $\Sigma$ is of finite type.
\end{lemma}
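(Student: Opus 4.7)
The plan is to produce, by hand, a finite memory set $D \subset G$ and a pattern set $P \subset A^D$ with $\Sigma = \Sigma(D,P)$. Since $\Sigma$ is finite and $G$-invariant, every $G$-orbit in $\Sigma$ is finite, so each $x \in \Sigma$ has a stabilizer $\Stab_G(x)$ of finite index. I set $H \coloneqq \bigcap_{x \in \Sigma} \Stab_G(x)$, a finite-index subgroup of $G$ with $\Sigma \subset \Fix(H)$. Since $G$ is finitely generated and $[G:H]<\infty$, $H$ is itself finitely generated, so I can pick a finite symmetric generating set $S \ni 1_G$ of $H$ and a finite transversal $T \ni 1_G$ for the cosets of $H$ in $G$, and declare $D \coloneqq ST$ and $P \coloneqq \Sigma_D$.

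The inclusion $\Sigma \subset \Sigma(D,P)$ is automatic from $G$-invariance. For the converse, the key preliminary observation is that because every $x \in \Sigma$ is $H$-periodic and $T$ is a transversal, the restriction map $\Sigma \to A^T$, $x \mapsto x|_T$, is injective. Now fix $y \in \Sigma(D,P)$ and, for every $g \in G$, choose $x_{i(g)} \in \Sigma$ with $y(gd) = x_{i(g)}(d)$ for all $d \in D$; write $x_0 \coloneqq x_{i(1_G)}$. The heart of the argument is an overlap computation: for every $g \in G$, $s \in S$ and $d \in T$ one has $sd \in ST = D$, so reading $y(gsd)$ through the window at $g$ and through the window at $gs$ gives
\[
x_{i(gs)}(d) = y(gs \cdot d) = y(g \cdot sd) = x_{i(g)}(sd) = (s^{-1}x_{i(g)})(d).
\]
Both $x_{i(gs)}$ and $s^{-1}x_{i(g)}$ lie in $\Sigma$ (by shift-invariance) and are $H$-fixed, so the preliminary injectivity forces $x_{i(gs)} = s^{-1}x_{i(g)}$.

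An induction on the $S$-word length of $h \in H$ starting from $h = 1_G$ then yields $x_{i(h)} = h^{-1}x_0 = x_0$, the last equality since $x_0 \in \Fix(H)$. Finally, any $g \in G$ can be written as $g = ht$ with $h \in H$ and $t \in T$, and the window at $h$ produces $y(g) = y(ht) = x_{i(h)}(t) = x_0(t) = x_0(ht) = x_0(g)$, so $y = x_0 \in \Sigma$.

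I expect the main difficulty to be calibrating $D$: the overlap identity demands $ST \subset D$ (so that $sd$ stays in the window at $g$ while $d$ stays in the window at $gs$), while the final reconstruction demands $T \subset D$ and that $S$ generate $H$. The minimal choice $D = ST$ meets all these requirements simultaneously. The accompanying structural point is the upgrade from agreement on $T$ to equality of configurations in $\Sigma$, which is precisely what the $H$-periodicity — made available by the finiteness of $\Sigma$ — allows us to do.
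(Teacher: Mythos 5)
Your proof is correct. It shares with the paper's proof the same engine --- a window $D$ of the form (generating set)$\cdot$(separating set), the overlap identity $x_{i(gs)} = s^{-1}x_{i(g)}$ obtained by reading $y$ through two shifted windows, and an induction on $S$-word length --- but the two arguments instantiate this engine differently. The paper builds its separating set $D_0$ ad hoc, by choosing for each pair of distinct configurations a group element where they differ (finiteness of $\Sigma$ makes $D_0$ finite), takes $S$ a generating set of all of $G$, and runs the induction over every $g \in G$. You instead exploit the dynamical consequence of finiteness: the common stabilizer $H = \bigcap_{x\in\Sigma}\Stab_G(x)$ has finite index, $\Sigma \subset \Fix(H)$, and a transversal $T$ then serves as the separating set; your induction runs only over $H$ (so you need $S$ to generate $H$, hence the standard fact that a finite-index subgroup of a finitely generated group is finitely generated), with a final coset decomposition $g = ht$ to recover all of $G$. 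Amusingly, your route is essentially a rigorous execution of the informal sketch the authors themselves give just before their proof (stabilizers, finite index, reduction to a finite quotient), which they then abandon in favor of the $D_0$ construction; the paper's version is marginally more economical in prerequisites (no transversals, no finite generation of subgroups), while yours makes the structural reason for finite type --- uniform $H$-periodicity of $\Sigma$ --- explicit. All the steps you flag as delicate (that $ST \subset D$ suffices for the overlap, that $T \subset D$ via $1_G \in S$, and that injectivity of restriction to $T$ upgrades agreement on $T$ to equality in $\Sigma$) check out.
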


Roughly, the idea is simple. Every configuration $x \in \Sigma$ 
has a finite orbit, equivalently, its stabilizer $H_x = \Stab_G(x)$ is of finite
index in $G$. Since the intersection of finitely many finite-index
subgroups is of finite index, the group $H \coloneqq \bigcap_{x \in \Sigma} H_x$ 
is of finite index in $G$. Moreover, by the Poincar\'e lemma,
there exists a finite index normal subgroup $K \subset H$. This way, we
can embed $\Sigma$ into $A^{G/K}$ (cf. \cite[Proposition 1.3.7]{book}). 
As $G/K$ is finite, it follows that $\Sigma$ is of finite type. The
proof below is a detailed and self-contained version of the above idea.
See \cite[Proposition 2.4]{JPAA} for a linear version (where ``finite''
becomes ``finite-dimensional'').

\begin{proof}[Proof of Lemma \ref{l:claim1-finite}]
Let $S \subset G$ be a finite generating subset of $G$.
After replacing $S$ by $S \cup S^{-1} \cup \{1_G\}$, we can assume that
$S = S^{-1}$ and $1_G \in S$.
Then, given any element $g \in G$, there exist $n \in \N$ and $s_1, s_2,
\ldots, s_n \in S$ such that
$g = s_1s_2 \cdots s_n$. The minimal $n \in \N$ in such an expression of
$g$ is  the $S$-length of $g$,
denoted by $\ell_S(g)$.

For all distinct $x,y \in \Sigma$ we can find $g = g_{x,y} \in G$ such
that $x(g) \neq y(g)$.
Then the finite set $D_0 \coloneqq \{g_{x,y}: x, y \in \Sigma \mbox{
s.t. } x \neq y\} \subset G$
satisfies  
\begin{equation}
\label{e:def-D-0}
x\vert_{D_0} = y\vert_{D_0} \mbox{ implies } x = y \mbox{ for all } x
\in \Sigma.
\end{equation}
Let us show that $\Sigma = \Sigma(D,P)$ for $D = SD_0$ and $P \coloneqq
\{x\vert_{D}: x \in \Sigma\} \subset A^D$.
By definition, we have 
\begin{multline}
\label{e:sigma-D-P}
\Sigma(D,P)  = \{x \in A^G : \mbox{ for all }  g \in G \mbox{ there
exists } x_g \in \Sigma \mbox{ such that } \\
(g^{-1} x)(d) = x_g(d) \text{  for all  } d \in D \}.
\end{multline}

Note that the element $x_g \in \Sigma$ in \eqref{e:sigma-D-P} is
uniquely defined by
$x \in \Sigma(D,P)$ and $g \in G$, since $D \supset D_0$ so that
$x_g\vert_D = x_g'\vert_D$ infers $x_g = x_g'$ by \eqref{e:def-D-0}.

We clearly have $\Sigma \subset \Sigma(D,P)$, since $\Sigma$ is
$G$-invariant.
\par 
For the converse inclusion, suppose that $x \in \Sigma(D,P)$ and
let us show that $x = x_{1_G} \in \Sigma$.
We prove by induction on the $S$-length of $g$ that
\begin{equation}
\label{e:g-1-G}
x_g = g^{-1}x_{1_G}
\end{equation}
for all $g \in G$. If $\ell_S(g) = 0$, then $g = 1_G$ and
\eqref{e:g-1-G} holds trivially.
Suppose now that $\ell_S(g) = n$ and let $s \in S$. Given $d_0 \in D_0$
we have, on the one hand
$x(gsd_0) = (g^{-1}x)(sd_0) = x_g(sd_0) = s^{-1}x_g(d_0)$, and, on the
other hand,
$x(gsd_0) = ((gs)^{-1}x)(d_0) = x_{gs}(d_0)$. This shows that
$(s^{-1}x_g)\vert_{D_0} = x_{gs}\vert_{D_0}$.
Since $s^{-1}x_g$ and $x_{gs}$ both belong to $\Sigma$, we deduce from
\eqref{e:def-D-0} that $s^{-1}x_g = x_{gs}$.
By induction, we have $x_g = g^{-1}x_{1_G}$ so that $x_{gs} =
(gs)^{-1}x_{1_G}$. This proves \eqref{e:g-1-G}.
 From \eqref{e:g-1-G} we obtain, for every $g \in G$,
\[
x(g) = (g^{-1}x)(1_G) = x_g(1_G) = g^{-1}x_{1_G}(1_G) = x_{1_G}(g).
\]
This shows that $x = x_{1_G} \in \Sigma$.
\end{proof}

\begin{proof}[Proof of Theorem \ref{t:finit-limit-set}]
The equivalence (a)$\iff$(b) follows from 
Theorem~\ref{t:char-nilpotent-alg-ca}. 
The implication (b)$\implies$(c) is obvious.
\par 
Suppose now that $\Omega(\tau)$ is finite. Let $M \subset G$ be a finite
subset which serves as a memory set for both $\tau$ and $\Sigma$,
and denote by $H \subset G$ the subgroup it generates. Let $\tau_H
\colon \Sigma_H \to \Sigma_H$ denote the corresponding restriction
cellular automaton. It follows from Lemma \ref{l:restriction-ls} that
$\Omega(\tau) = \Omega(\tau_H)^{G/H}$.
If $G$ is not finitely generated, then $G/H$ is infinite and necessarily
$\Omega(\tau_H)$ and therefore $\Omega(\tau)$ must consist of
a single element, as $\Omega(\tau)$ is nonempty (cf.\
Theorem~\ref{t:limit-set-not-empty}). This proves the implication (c)
$\implies$ (b) for $G$ not finitely generated.

If $G$ is finitely generated, it follows from Lemma~\ref{l:claim1-finite} 
that $\Omega(\tau)$ is s subshift of finite type.
Since $A$ is finite, the characterization of subshifts of finite type in Theorem
\ref{t:noetheiran-alg-sft} can be applied to the sequence
\[
\Sigma \supset \tau(\Sigma) \supset \tau^2(\Sigma) \supset \cdots
\supset \Omega(\tau) = \bigcap_{n \in \N} \tau^n(\Sigma),
\]
and implies that there exists $n_0 \in \N$ such that $\Omega(\tau) =
\tau^{n_0}(\Sigma)$. 
Therefore, $\Omega(\tau)$ is a factor of $\Sigma$.
Since $\Sigma$ is topologically mixing, so is $\Omega(\tau)$. 
Now let $x, y \in \Omega(\tau)$. As $\Omega(\tau)$ is finite and Hausdorff,  
$\{x\}$ and $\{y\}$  are open in $\Omega(\tau)$. 
Thus, by topological mixing of $\Omega(\tau)$, there exists a finite
subset $F \subset G$ such that $x = gy$ for all $g \in G
\setminus F$. Since 
$\Omega(\tau)$ is finite, the stabilizer $H$ of $y$ in $G$ is an infinite subgroup
of $G$. It follows that $H \cap (G \setminus F) \neq \varnothing$. Taking $g \in H \cap (G
\setminus F)$ yields $x = gy = y$. Hence, $\Omega(\tau)$ is a singleton and  
this concludes the proof of
the implication (c)$\implies$(b).
\par 
Finally, suppose that $G$ is finitely generated.
As any finite $G$-invariant subset of $A^G$ necessarily consists only
of periodic configurations, we have (c)$\implies$(d).
The reverse implication follows from 
the finiteness of closed subshifts containing only periodic configurations proved    
in \cite[Theorem~5.8]{ballier-phd-2009} and in
\cite[Theorem~1.4]{meyerovitch-salo-2019} (see also 
\cite[Theorem~3.8]{ballier-durand-jeandel-2008} for the case $G=
\Z^2$). Note that since $A^G$ is compact and $\tau$ is continuous, 
$\Omega(\tau)$ is  closed in $A^G$. 
\end{proof}

%SECTION 13 
\section{Proof of Theorem~\ref{t:char-nilpotent-finite-alg-ca}} 
By Corollary \ref{c:sofic-closed}, we know that $\Sigma$ is closed in $A^G$. 
The equivalence (a)$\iff$(b) thus results from Proposition~\ref{p:carct-nilp-ca}.  
It is trivial that (a)$\implies$(c)$\implies$(d). 
For the implications (d)$\implies$(a) and (c)$\implies$(a),  
let $M \subset G$ be a finite 
subset containing the memory sets of both $\tau$ and $\Sigma$ 
such that $1_G \in M$ and $M=M^{-1}$. 
Let $H$ be the subgroup of $G$ generated by $M$. Let $\tau_H
\colon \Sigma_H \to \Sigma_H$ denote the restriction
cellular automaton. 
By Lemma~\ref{l:restriction-ls}.(i), we have 
$\Omega(\tau) = \Omega(\tau_H)^{G/H}$. 
Thus, if $\Omega(\tau)$ is finite then so is $\tau(\tau_H)$. Likewise, if (d) holds for $\tau$ 
then $\{ x(1_G) \colon x \in \Omega(\tau_H) \}$ is finite 
and $\Omega(\tau_H)$ consists of periodic configurations as well. 
On the other hand, $\tau$ is nilpotent if $\tau_H$ is nilpotent by Lemma~\ref{l:restriction-ls}.(ii).  
Therefore, up to replacing $G$ by $H$, we can assume that $G$ is finitely generated by $M$.   
It then suffices to show that (d)$\implies$(a) as we already know that (c)$\implies$(d). 
 \par  
Assume that (d) holds. Then $T \coloneqq \{ x(1_G) \colon x \in \Omega(\tau) \}$  
is finite. 
As $\Omega(\tau)$ is $G$-invariant,  $x(g)\in T$ for every $x \in \Omega(\tau)$ and $g \in G$.   
\par 
Let $(\Sigma_{ij})_{i, j \in \N}$ 
be the space-time inverse system associated with $\tau$ and the memory set $M$.  
We set $\Sigma^*_{i0} \coloneqq \Sigma_{i0} \setminus \{x \in \Sigma_{i0} \colon x(1_G)\in T\}$ 
for every $i \geq 0$  and define  for every $i \geq 0$ and $j \geq 1$: 
\[
\Sigma^*_{ij}= (q_{i 0} \circ \dots \circ q_{i, j-1})^{-1}(\Sigma_{i0}^*) \subset \Sigma_{ij}.   
\] 
The unit transition maps $p^*_{ij} \colon \Sigma^*_{i+1, j} \to \Sigma^*_{ij}$
and $q^*_{ij} \colon \Sigma^*_{i, j+1} \to \Sigma^*_{i j}$ 
are respectively the restrictions of the transition maps $p_{ij}$ and $q_{ij}$   
of the system $(\Sigma_{ij})_{i, j \in \N}$. 
\par
Suppose first that $\Sigma^*_{ij} \neq \varnothing$ for all $i, j \in \N$.  
Then exactly as in the proof of Theorem  \ref{t:char-nilpotent-alg-ca}, 
there exists $x =(x_{ij})_{i, j \in \N} \in \varprojlim_{i,j \in \N} \Sigma^{*}_{ij}$ and we obtain  
$y_0 = \Phi (x) \in \Omega(\tau)$ with $y_0(1_G) = x_{00}(1_G)$. 
But $x_{00}(1_G) \notin T$ because $x_{00} \in \Sigma^*_{00}$, 
we find that $\Omega(\tau) \not \subset T^G$ which is a contradiction.  
\par 
Therefore, we must have $\Sigma^*_{ij}= \varnothing$ for some $i, j \in \N$. 
If $j=0$ then $\Sigma^*_{i0}= \varnothing$ and $x(1_G) \in T$ for
all $x \in \Sigma_{i0}$. We deduce that $A^G \subset T^G$ and thus $A \subset T$ is finite. 
As $G$ is infinite and $\Omega(\tau)$ contains only periodic configurations, 
Theorem~\ref{t:finit-limit-set} implies that $\tau$ is nilpotent. 
If $j \geq 1$ then by definition  of $\Sigma_{ij}$, we have for every $x \in \Sigma_{ij}$ that 
\[
(q_{i 0} \circ \dots \circ q_{i, j-1}) (x)(1_G) \in T.
\] 
Hence, as $\tau^{j}$ is $G$-equivariant, we deduce that $\tau^{j}(x) \in T^G$ for every $x \in A^G$. 
Thus,  the restriction $\sigma \coloneqq \tau^{j}\vert_{T^G} \colon T^G \to T^G$ is a well-defined cellular automaton. 
As a subset of  $\Omega(\tau)$, the set $\Omega(\sigma)$ also consists of periodic configurations.  
We deduce from Theorem~\ref{t:finit-limit-set} that $\sigma$ is nilpotent, say, 
$\sigma^m(x) = x_0$ for all $x \in T^G$ for some $m \in \N$ and $x_0 \in A^G$.  
It follows that $\tau^{(m+1)j}(x)= \sigma^m(\tau^{j} (x) ) =x_0$ for all $x \in A^G$. 
We conclude that $\tau$ is nilpotent. The proof of the theorem is completed.   
 \qed 
 
 % SECTION 14
\section{Counter-examples}

The following example (c.f.~\cite[Example~5.1]{csc-algebraic} and \cite[Example~8.1]{ccp-2019}) 
shows that  Theorem~\ref{t:closed-image} and  Assertions~(i) and (iii)  of Theorem~\ref{t:limit-set-not-empty}  become false
if we remove the hypothesis that the ground field $K$ is algebraically closed.

\begin{example}
\label{ex:limit-set-not-closed}
Let $G\coloneqq \Z$ be the additive group of integers and let 
 $V \coloneqq \Spec(\R[t])=\A^1_{\R}$ denote 
the affine line over $\R$.
Then $A \coloneqq V(\R) = \R$. 
Consider the cellular automaton  $\tau \colon \R^{\Z}  \to \R^{\Z}$
with memory set   $M \coloneqq \{0,1\}\subset G$
and associated local defining map $\mu \colon \R^M \to \R$
defined by
$\mu(p) = p(1) - p(0)^2$  for all $p  \in \R^M$.
Clearly, $\tau$ is an algebraic cellular automaton over $(G,V,\R)$. 
Indeed,  $\mu$ is  
induced by the algebraic morphism  $f\colon V^2 \to V$ associated with  the morphism of $\R$-algebras
\begin{align*}
\R[t] & \to \R[t_0,t_1] \\
  t & \mapsto  t_1-t_0^2. 
\end{align*}
Note that $\tau \colon \R^{\Z}\to \R^{\Z}$ is given by
\[
\tau(c)(n) = c(n+1) - c(n)^2 \quad \text{for all } c\in \R^{\Z} \text{ and } n \in \Z.
\]

  \begin{claim}
 \label{claim:omega-not-shift}
The limit set  $\Omega(\tau)$ is a dense non-closed subset of $\R^{\Z}$. 
In particular, $\Omega(\tau)$ is not a closed subshift of $\R^{\Z}$. 
\end{claim}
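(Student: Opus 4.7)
The plan is to establish that $\Omega(\tau)$ is simultaneously dense in $\R^{\Z}$ and a proper subset of $\R^{\Z}$; together these imply it is not closed.

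The key technical input is an explicit membership criterion: every $y \in \R^{\Z}$ satisfying $y(n) = 0$ for all $n \leq N_0$, for some $N_0 \in \Z$, belongs to $\Omega(\tau)$. To prove this I would define $c \in \R^{\Z}$ by setting $c(n) = 0$ for all $n \leq N_0 + 1$ and then by the forward recurrence $c(n+1) = y(n) + c(n)^2$ for $n \geq N_0 + 1$. A direct check (splitting into cases $n \leq N_0 - 1$, $n = N_0$, and $n \geq N_0 + 1$) verifies $\tau(c)(n) = c(n+1) - c(n)^2 = y(n)$ for every $n \in \Z$, and moreover $c$ itself vanishes on $(-\infty, N_0 + 1]$. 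Iterating the construction yields $c_k \in \R^{\Z}$ with $\tau^k(c_k) = y$ for every $k \geq 1$, so indeed $y \in \Omega(\tau)$.

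Density then follows in one line: any basic open neighborhood of an arbitrary $x \in \R^{\Z}$ in the prodiscrete topology is determined by a finite window $F \subset \Z$, and the configuration $y$ equal to $x$ on $[\min F, \infty)$ and to $0$ on $(-\infty, \min F - 1]$ lies in $\Omega(\tau)$ by the criterion above and agrees with $x$ on $F$.

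To show $\Omega(\tau) \subsetneq \R^{\Z}$, I would fix any real $a > 1/4$ and argue that the constant configuration $a^{\Z}$ fails to lie even in $\tau(\R^{\Z})$. Any preimage $c$ would satisfy $c(n+1) = c(n)^2 + a$ for all $n$, i.e., would be a bi-infinite real orbit of the quadratic map $f(x) = x^2 + a$. Since $f(x) - x = (x - 1/2)^2 + (a - 1/4) > 0$ on $\R$, the map $f$ has no real fixed point and any bi-infinite orbit $(c_n)$ is strictly increasing; combined with the uniform lower bound $c_n = f(c_{n-1}) \geq a$, this forces $c_n \to L \in \R$ as $n \to -\infty$ with $L = f(L)$, a contradiction. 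Hence $a^{\Z} \notin \Omega(\tau)$, and combined with density this already gives non-closedness. One can make the failure of closedness explicit by noting that the truncations $y_n$ agreeing with $a^{\Z}$ on $[-n, n]$ and vanishing outside all lie in $\Omega(\tau)$ by the criterion, while $y_n \to a^{\Z}$ in the prodiscrete topology.

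The only non-routine step is the preimage construction for left-vanishing configurations; however, the local rule $\mu(p) = p(1) - p(0)^2$ is tailored precisely so that the telescoping $c(n+1) - c(n)^2 = y(n)$ propagates the vanishing \emph{one step further to the left} at each iteration, making the induction essentially free. The main subtlety is rather in the exclusion of bi-infinite orbits of $f$ for $a > 1/4$, where one needs both strict monotonicity \emph{and} the uniform lower bound $c_n \geq a$ to rule out escape to $-\infty$.
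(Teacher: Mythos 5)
Your proof is correct and follows essentially the same route as the paper: density is obtained by the same explicit backward-orbit construction for configurations vanishing on a left-infinite ray, and non-closedness by exhibiting a constant configuration outside $\tau(\R^{\Z})$ via the absence of real fixed points of $x \mapsto x^2 + a$. The only cosmetic difference is that the paper cites earlier references for the non-surjectivity at the constant configuration $e = 1^{\Z}$ (though it reproves the same monotone-orbit/fixed-point argument later in the example), whereas you give a self-contained argument valid for every $a > 1/4$.
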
 

\begin{proof}
Let $c \in \R^{\Z}$ and let $F \subset \Z$ be a finite subset.
Choose an integer $m \in \Z$ such that $F\subset [m,\infty)$
and consider the configuration $d \in \R^{\Z}$ defined by
$d(n) \coloneqq  0$ if $n < m$ and $d(n) \coloneqq  c(n)$ if $n \geq m$.
For each $k \in \N$, define by induction on $k$ a configuration $d_k \in \R^{\Z}$
in the following way.
We first take $d_0 = d$.
Then, assuming that the configuration $d_k$ has been defined,
we define the configuration $d_{k + 1}$, using induction on $n$,
by $d_{k + 1}(n) \coloneqq 0$ if $n \leq  m$ and
$d_{k+1}(n+1) \coloneqq d_k(n) + d_{k + 1}(n)^2$ if $n \geq m$.
Clearly, $\tau(d_{k + 1}) = d_k$  so that
$d = d_0 = \tau^k(d_k)$ for all $k \in \N$.    
  Therefore $d \in \Omega(\tau)$. 
Since  $c$ and $d$ coincide on $[m,\infty)$ and hence on $F$,
this shows that $c$ is in the closure of $\Omega(\tau)$. 
Thus $\Omega(\tau)$ is dense in $\R^{\Z}$.
\par 
In \cite[Example~5.1]{csc-algebraic} and \cite[Example~8.1]{ccp-2019}, it is shown that 
$\im(\tau)$ is not closed of $R^\Z$ and 
the constant configuration $e\in \R^{\Z}$, defined by $e(n) \coloneqq 1$ for all $n\in \Z$, 
 does not belong to $\im(\tau)$. 
This implies that   $e \notin \Omega(\tau)$.
As $\Omega(\tau)$ is dense in $\R^{\Z}$,
we deduce that $\Omega(\tau)$ is not closed in $\R^{\Z}$.
\end{proof}

Remark that $\im(\tau)$ is an algebraic sofic subshift of $\R^\Z$ since it is the image of the full shift $\R^\Z$ under the 
algebraic cellular automaton $\tau$. 
Thus, an algebraic sofic subshift may fail to be closed in the ambient full shift. 
\par 
For every integer $n \geq 1$,   the set $M_n \coloneqq \{ 0, \dots, n \}$ is a memory set for $\tau^n$. 
Let $\mu_n \colon \R^{M_n} \to \R$ denote the associated local defining map. 
We shall use the fact  that  for each $n \geq 1$, there exists a polynomial $\nu_n \in \R[t_0, \dots, t_{n-1}]$ such that 
for every $p \in \R^{M_n}$, 
\begin{equation}
\label{e:exemple-NW}
\mu_n(p) = p(n) + \nu_n(p(0), \dots, p(n-1)).
\end{equation}
This fact can be  proved by an easy induction.
For $n=1$, we have $\mu_1(p) = \mu(p) =  p(1) - p(0)^2$ for every $p \in \R^{M_1}$ 
so that we can take $\nu_1(t_0)= - t_0^2$. 
Suppose now that the assertion holds for some $n\geq 1$. 
Let $c \in \R^\Z$ and let $d = \tau^{n}(c)$. 
By the induction hypothesis, we have that $d(0)= \mu_n(c(0), \dots, c(n))$ and 
\[
d(1) = \mu_n(c(1), \dots, c(n+1)) 
= c(n+1) + \nu_n(c(1), \dots, c(n)). 
\]
Therefore, we get 
\begin{align*} 
\tau^{n+1}(c)(0) & = \tau(\tau^{n}(c))(0) = \tau(d)(0) = d(1) - d(0)^2 \\ 
& = c(n+1) + \nu_k(c(1), \dots, c(n)) - \mu_n(c(0), \dots, c(k))^2 \\
& = c(n+1) + \nu_{n+1}(c(0), \dots, c(n)), 
\end{align*}
where $\nu_{n+1} \in \R[t_0, \dots, t_{n}]$ is given by the formula 
\[
\nu_{n+1} (t_0, \dots, t_{n}) \coloneqq \nu_n(t_1, \dots, t_n) - \mu_n(t_0, \dots, t_n)^2. 
\] 
Thus, for every 
$p \in \R^{M_{n+1}}$, 
\[
\mu_{n+1}(p) = p(n+1) + \nu_{n+1}(p(0), \dots, p(n)),  
\] 
and the assertion follows by induction.

\begin{claim}
\label{claim:induction-rec}
For every configuration $c \in \R^{\Z}$ and any integer $n \geq 1$, there exists $d \in \R^{\Z}$ such that  
$d(k) = c(k)$ for all  $k \leq 0$ and $\tau^{n}(d)(k)=c(k)$ for all $k \geq -n+1$.  
\end{claim}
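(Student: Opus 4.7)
The plan is to solve for $d$ recursively using the triangular structure of $\mu_n$ exhibited in equation \eqref{e:exemple-NW}. By the $\Z$-equivariance of $\tau^n$ applied to the local formula \eqref{e:exemple-NW}, we have for every $c \in \R^\Z$ and every $k \in \Z$ the identity
\[
\tau^n(c)(k) = c(k+n) + \nu_n\bigl(c(k),c(k+1),\ldots,c(k+n-1)\bigr).
\]
Setting $m = k+n$, the requirement $\tau^n(d)(k) = c(k)$ for $k \geq -n+1$ is equivalent to
\[
d(m) = c(m-n) - \nu_n\bigl(d(m-n),d(m-n+1),\ldots,d(m-1)\bigr) \quad \text{for all } m \geq 1.
\]
This is a one-sided explicit recurrence: each new value $d(m)$ (for $m \geq 1$) is determined by $n$ already-known values $d(m-n),\ldots,d(m-1)$.

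Accordingly, I would define $d \in \R^\Z$ in two stages. First, set $d(k) \coloneqq c(k)$ for all $k \leq 0$; this handles the first required condition directly. Second, define $d(m)$ for $m \geq 1$ by induction on $m$, using the formula above. At the induction step, the arguments $d(m-n),\ldots,d(m-1)$ are available because each index is either $\leq 0$ (hence fixed by the first stage) or strictly less than $m$ and positive (hence already defined at an earlier stage of the induction).

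Having constructed $d$, verification of $\tau^n(d)(k) = c(k)$ for $k \geq -n+1$ is immediate: with $m = k+n \geq 1$, the defining relation rearranges exactly to $d(m) + \nu_n(d(k),\ldots,d(k+n-1)) = c(k)$, which is precisely $\tau^n(d)(k) = c(k)$ by the displayed identity above.

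There is no real obstacle here once \eqref{e:exemple-NW} is in hand; the whole point is that the triangular form $\mu_n(p) = p(n) + \nu_n(p(0),\ldots,p(n-1))$ makes the equation $\tau^n(d) = c$ solvable as a forward recurrence in a single variable on the half-line, with the past of $d$ prescribed freely. The only mild point to articulate carefully is the indexing: one must check that $m \geq 1$ corresponds exactly to $k = m-n \geq -n+1$, so that no condition beyond what is claimed is imposed on $d$.
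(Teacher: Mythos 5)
Your proposal is correct and follows essentially the same route as the paper: prescribe $d(k)=c(k)$ for $k\leq 0$, then use the triangular form $\mu_n(p)=p(n)+\nu_n(p(0),\dots,p(n-1))$ of \eqref{e:exemple-NW} to define $d(m)$ for $m\geq 1$ by the forward recurrence $d(m)=c(m-n)-\nu_n(d(m-n),\dots,d(m-1))$, and verify the claim by direct substitution. The indexing check $m=k+n\geq 1 \iff k\geq -n+1$ is exactly the one implicit in the paper's argument.
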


\begin{proof}
Let $c \in \R^{\Z}$. 
We define $d \in \R^{\Z}$ by 
\[
d(k) = c(k) \quad \mbox{if } k \leq 0 , \\ 
\]
and inductively for $k \geq   1$ by 
\begin{equation} 
\label{e:exemple-NW-x}
d(  k) \coloneqq  c(k - n) - \nu_{n} (d(k -n), \dots, d(k-1)). 
\end{equation} 
By applying~\eqref{e:exemple-NW} and~\eqref{e:exemple-NW-x}, we obtain, for every $k \geq -n+1$,  
\begin{align*}
\tau^{n}(d)(k) 
& = \mu_{n}(d(k), \dots, d(n+k)) \\
& = d(n+k) + \nu_{n}(d(k), \dots, d(k+n-1)) \\ 
& = c(k),
\end{align*}
and the claim is proved.  
\end{proof} 

\begin{claim}
\label{claim:rec-dense}
The set $\Rec(\tau)$ is a dense non-closed subset of  $\R^{\Z}$.
In particular, $\Rec(\tau)$ is not a closed subshift of $\R^{\Z}$. 
\end{claim}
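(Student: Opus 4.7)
The plan is to prove density by an iterative construction and non-closedness by exhibiting a single explicit non-recurrent configuration. The key preparatory step is a shifted form of Claim \ref{claim:induction-rec}: applying Claim \ref{claim:induction-rec} to $s^{-1}c$ and using the $G$-equivariance of $\tau$, one sees that for every $c \in \R^{\Z}$ and every finite interval $[a,b] \subset \Z$, setting $n = b - a + 1$, there exists $d \in \R^{\Z}$ with $d\vert_{(-\infty, b]} = c\vert_{(-\infty, b]}$ and $\tau^n(d)\vert_{[a, \infty)} = c\vert_{[a, \infty)}$; in particular, $\tau^n(d)\vert_{[a,b]} = d\vert_{[a,b]}$.

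For density, fix $c \in \R^{\Z}$ and a finite $F \subset [-N, N]$. I iteratively build a sequence $(x^{(k)})_{k \geq 0}$ starting with $x^{(0)} \coloneqq c$, together with intervals $[a_k, b_k] \subset \Z$ satisfying $a_k \to -\infty$, $b_k \to \infty$, and integers $n_k \coloneqq b_k - a_k + 1$, such that at each step $k \geq 1$ the shifted claim furnishes $x^{(k)}$ with (i) $x^{(k)}\vert_{(-\infty, b_k]} = x^{(k-1)}\vert_{(-\infty, b_k]}$ and (ii) $\tau^{n_k}(x^{(k)})\vert_{[a_k, b_k]} = x^{(k)}\vert_{[a_k, b_k]}$; the choice $b_k \geq b_{k-1} + n_{k-1}$, with $b_0 \geq N$, is imposed to preserve earlier witnesses. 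Specifically, for $j < k$ the value $\tau^{n_j}(x^{(k)})(m)$ for $m \in [a_j, b_j]$ depends only on $x^{(k)}\vert_{[a_j, b_j + n_j]}$, which by (i) and the choice of $b_k$ agrees with $x^{(k-1)}\vert_{[a_j, b_j + n_j]}$; hence the equality $\tau^{n_j}(x^{(k)})\vert_{[a_j, b_j]} = x^{(k)}\vert_{[a_j, b_j]}$ propagates inductively. The prodiscrete limit $x \coloneqq \lim_k x^{(k)}$ exists since each coordinate stabilizes, and the same finite-propagation argument yields $\tau^{n_k}(x)\vert_{[a_k, b_k]} = x\vert_{[a_k, b_k]}$ for every $k$. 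Since $\{[a_k, b_k]\}$ exhausts $\Z$, any finite $F' \subset \Z$ eventually lies in some $[a_k, b_k]$, so $\tau^{n_k}(x)\vert_{F'} = x\vert_{F'}$, giving $x \in \Rec(\tau)$; and (i) together with $b_0 \geq N$ gives $x\vert_F = c\vert_F$, proving density.

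For non-closedness, the constant configuration $e \in \R^{\Z}$ with $e(n) \coloneqq 1$ satisfies $\tau(e)(n) = 1 - 1^2 = 0$, so $\tau(e) = 0^{\Z}$, and $\tau(0^{\Z}) = 0^{\Z}$; consequently $\tau^n(e) = 0^{\Z}$ for every $n \geq 1$, and the basic neighborhood $\{y \in \R^{\Z} \colon y(0) = 1\}$ of $e$ contains no forward iterate of $e$, so $e \notin \Rec(\tau)$. Combined with density, $e \in \overline{\Rec(\tau)} \setminus \Rec(\tau)$, proving $\Rec(\tau)$ is not closed; since $\Rec(\tau)$ is manifestly $G$-invariant, it is a non-closed subshift. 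The main obstacle in the argument is the bookkeeping in the iteration: the condition $b_k \geq b_{k-1} + n_{k-1}$ is crucial because each subsequent modification is made only on $(b_\ell, \infty)$, and one needs this never to intrude on the support interval $[a_j, b_j + n_j]$ on which the earlier equality depends via the finite propagation speed of $\tau^{n_j}$.
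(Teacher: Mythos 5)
Your proposal is correct and follows essentially the same strategy as the paper: both iterate Claim~\ref{claim:induction-rec} (shifted via the $\Z$-equivariance of $\tau$) to build, coordinate-by-stabilizing-coordinate, a recurrent configuration agreeing with a given $c$ on a prescribed finite window, with a growth condition on the intervals (your $b_k \geq b_{k-1}+n_{k-1}$ plays exactly the role of the paper's choice of the radii $3^n$ and cut-offs $2\cdot 3^n$) ensuring the finite propagation speed of $\tau^{n_j}$ never lets a later modification destroy an earlier periodicity witness. The non-closedness argument via the constant configuration $e \equiv 1$, for which $\tau^n(e)(0)=0\neq e(0)$ for all $n\geq 1$, is the same as in the paper.
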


\begin{proof} 
Let $c \in \R^{\Z}$. 
For each $n_0 \geq 1$, define by induction on $n \geq n_0$ a configuration $d_n \in \R^{\Z}$
in the following way. 
Let $d_{n_0} = c$.
Then, assuming that the configuration $d_n$ has been defined,
we can choose by Claim~\ref{claim:induction-rec} and the $\Z$-equivariance of $\tau$ 
a configuration $d_{n + 1}$ satisfying $d_{n+1}(k) = d_n(k)$ for $k \leq 2 \cdot 3^{n}$ 
and  $\tau^{3^{n+1}}(d_{n+1})(k) = d_n(k)$ for $k \geq -3^n+1$.  
\par
Hence, we can define $d \in \R^{\Z}$ by setting $d(k)= d_n(k)$ for any $n \geq n_0$ such that $k \leq 2 \cdot 3^{n}$. 
Let $n \geq n_0$. 
Remark that $M_{3^{n+1}}$ is a memory set of $\tau^{3^{n+1}}$ 
and $d(k)= d_{n+1}(k)$ for $k \leq 2 \cdot 3^{n+1}$. Hence, for $-3^n +1 \leq  k \leq 3^n$ 
so that in particular $3^{n+1} + k \leq 2 \cdot 3^{n+1}$, we have 
\begin{equation} 
\tau^{3^{n+1}}(d)(k) = \tau^{3^{n+1}}(d_{n+1})(k) = d_n(k) =d(k).    
\end{equation} 
Since this holds for all $n \geq n_0$ and as every finite subset is contained in 
$\{ -3^n+1 , \dots,  3^n\}$ for any large enough $n$, we deduce that 
$d \in \Rec(\tau)$. 
\par 
It is clear from the construction that 
$d_n(k)= c(k)$ for every $n \geq n_0$ and $k \leq 2 \cdot 3^{n_0}$. 
Thus $d(k) = c(k)$ for every $k \leq 2 \cdot 3^{n_0}$. As $n_0 \geq 1$ 
is arbitrary, it follows that every $c \in \R^{\Z}$ belongs to the closure of $\Rec(\tau)$. 
In other words, $\Rec(\tau)$ is dense in $\R^{\Z}$.
\par
The set $\Rec(\tau)$ is not closed in $\R^{\Z}$.
Indeed, the configuration $e \in \R^{\Z}$, given by $e(k) = 1$ for all $k \in \Z$,
does not belong to $\Rec(\tau)$ since $\tau^n(e)(0) = 0 \not= c(0)$ for all $n \geq 1$.
\end{proof}

\begin{claim}
\label{claim:nw-cr-all}
One has $\NW(\tau) = \Crec(\tau) = \R^{\Z}$.
\end{claim}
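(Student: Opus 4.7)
The plan is very short because all the heavy lifting has already been done in the preceding claims. My strategy is to use the density of $\Rec(\tau)$ in $\R^{\Z}$ established in Claim~\ref{claim:rec-dense}, together with the closedness of the non-wandering set, to sandwich $\NW(\tau)$ and $\Crec(\tau)$ between a dense set and the whole space.

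Concretely, I would first recall the general inclusions $\Rec(\tau) \subset \NW(\tau) \subset \Crec(\tau) \subset \R^{\Z}$, where the middle inclusion is Proposition~\ref{p:nw-included-in-crec} applied to the uniformly continuous map $\tau$ on the uniform space $\R^{\Z}$. Next, I would invoke the general fact, stated in the Introduction, that $\NW(\tau)$ is closed in $\R^{\Z}$. Since Claim~\ref{claim:rec-dense} gives that $\Rec(\tau)$ is dense in $\R^{\Z}$, taking the closure of $\Rec(\tau) \subset \NW(\tau)$ forces $\NW(\tau) = \R^{\Z}$. The chain of inclusions above then collapses to give $\NW(\tau) = \Crec(\tau) = \R^{\Z}$.

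There is no real obstacle: the only nontrivial input is Claim~\ref{claim:rec-dense}, which was already established via the backward‑orbit construction using the triangular structure \eqref{e:exemple-NW} of the local rule for $\tau^n$. The argument is purely formal once that density is in hand, and it illustrates the pathology highlighted in the example: even though $\Omega(\tau)$ fails to be closed and $\tau(\Sigma)$ need not be closed over the non-algebraically-closed field $\R$, the closed set $\Crec(\tau)$ is forced to contain the dense recurrent set and therefore equals the entire full shift, in sharp contrast to the inclusion $\Crec(\tau) \subset \Omega(\tau)$ guaranteed under hypotheses $(\mathrm{H1})$--$(\mathrm{H3})$ by Corollary~\ref{c:cr-subset-omega}.
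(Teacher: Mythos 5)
Your argument is correct and essentially identical to the paper's: both rest on the density of $\Rec(\tau)$ from Claim~\ref{claim:rec-dense} together with closedness, the only cosmetic difference being that the paper invokes the closedness of both $\NW(\tau)$ and $\Crec(\tau)$ directly, while you use only the closedness of $\NW(\tau)$ and then the inclusion $\NW(\tau) \subset \Crec(\tau)$ to conclude.
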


\begin{proof}
By Claim~\ref{claim:rec-dense}, the set $\Rec(\tau)$ is dense in $\R^{\Z}$.
Since $\NW(\tau)$ and $\Crec(\tau)$ are closed in $\R^{\Z}$ and contain $\Rec(\tau)$, we deduce that 
$\NW(\tau) = \Crec(\tau) = \R^{\Z}$. 
\end{proof}

\begin{claim}
One has $\Rec(\tau) \not\subset \Omega(\tau)$ and $\Omega(\tau) \not \subset \Rec(\tau)$. 
\end{claim}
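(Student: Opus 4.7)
The plan is to produce, for each of the two non-inclusions, an explicit counterexample built from the constant configuration $e \in \R^{\Z}$ by reusing the constructions already carried out in Claim~\ref{claim:omega-not-shift} and Claim~\ref{claim:rec-dense}.

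To establish $\Omega(\tau) \not\subset \Rec(\tau)$, I would apply the construction in the proof of Claim~\ref{claim:omega-not-shift} to $c = e$ and $m = 0$, obtaining $d \in \Omega(\tau)$ given by $d(n) = 0$ for $n < 0$ and $d(n) = 1$ for $n \geq 0$. A straightforward induction on $j \geq 1$ then shows $\tau^j(d)(n) = 0$ for every $n \geq 0$: the base case $j = 1$ is $\tau(d)(n) = d(n+1) - d(n)^2 = 1 - 1 = 0$ for $n \geq 0$, and the inductive step repeats the same computation with zeros. In particular $\tau^j(d)(0) = 0 \neq 1 = d(0)$ for all $j \geq 1$, so no iterate of $d$ enters the open neighborhood $\{x \in \R^{\Z} : |x(0) - 1| < 1/2\}$ of $d$, whence $d \notin \Rec(\tau)$.

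To establish $\Rec(\tau) \not\subset \Omega(\tau)$, I would apply the construction in the proof of Claim~\ref{claim:rec-dense} to $c = e$ with $n_0 = 1$, obtaining a recurrent configuration $d \in \Rec(\tau)$ that satisfies $d(k) = 1$ for every $k \leq 6$; this agreement on $(-\infty, 2 \cdot 3^{n_0}]$ is built into that construction via the equality $d(k) = d_{n_0}(k) = c(k)$ recorded there. It then suffices to check $d \notin \im(\tau)$, since $\Omega(\tau) \subset \im(\tau)$. For this I would repeat, on the interval $(-\infty, 6]$, the backward-recursion obstruction already used in Claim~\ref{claim:omega-not-shift} to prove $e \notin \im(\tau)$: a preimage $d' \in \R^{\Z}$ with $\tau(d') = d$ would satisfy $d'(j) = 1 + d'(j-1)^2$ for every $j \leq 7$, so $d'(j-1) = \pm\sqrt{d'(j) - 1}$ going backward. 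The negative branch produces $d'(j-1) \leq 0 < 1$, which makes the next backward step infeasible; the positive branch requires $d'(6) \geq a_K$ at depth $K$, where $a_0 = 1$ and $a_{k+1} = 1 + a_k^2$ tends to $+\infty$ as $k \to \infty$. No fixed real $d'(6)$ can accommodate all $K$, so $d \notin \im(\tau)$.

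The main (and essentially only) nontrivial observation is that the recurrent configuration produced by the construction of Claim~\ref{claim:rec-dense} with $c = e$ inherits the ``constant $1$'' values of $e$ on $(-\infty, 6]$, which is precisely the local obstruction preventing $e$ itself from lying in $\im(\tau)$. Once this is recorded, both non-inclusions reduce to the elementary arguments above and the claim follows.
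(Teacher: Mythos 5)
Your proposal is correct and follows essentially the same route as the paper: the same witness $d\in\Omega(\tau)$ (zero on the negative integers, one elsewhere) with $\tau^n(d)(0)=0\neq d(0)$ rules out recurrence, and the same recurrent configuration from Claim~\ref{claim:rec-dense} with $c=e$, shown not to lie in $\im(\tau)$ via the impossibility of a backward solution of $b(k+1)=1+b(k)^2$. The only cosmetic difference is that you derive the final contradiction from the divergence of $a_{k+1}=1+a_k^2$ rather than, as the paper does, from passing to the monotone limit and noting that $t=1+t^2$ has no real root.
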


\begin{proof} 
By the proof of Claim~\ref{claim:omega-not-shift}, we know that 
the configuration $c \in \R^{\Z}$, given by $c(k)=0$ if $k \leq - 1$ and $c(k)=1$ if $k \geq 0$, 
belongs to  $\Omega(\tau)$. 
However, as $\tau^n(c)(0)=0 \neq c(0)$ for every $n \geq 1$, it follows that $c \notin \Rec(\tau)$.
\par 
On the other hand, reconsider the configuration $e \in \R^{\Z}$ given by $e(k)=1$ for every $k \in \Z$. 
The proof of Claim~\ref{claim:rec-dense} actually shows that there exists $d \in \Rec(\tau)$ such that 
$d(k)= e(k)=1$ for all $k \leq 0$. 
Suppose that there exists $b \in \R^{\Z}$ such that 
$\tau(b)= d$.
 Then $b(k+1) = 1 + b(k)^2$ for all $k \leq 0$. 
 Thus $1 \leq b(k) \leq b(k+1)$ for all $k \leq 0$, so that the limit $t \coloneqq  \lim_{k \to - \infty} b(k)$ exists and is finite. 
 By passing to the limit in the relation 
$b(k+1) = 1 + b(k)^2$, we find that $t=1+t^2$, which is a contradiction as $t$ must be real. 
This shows that $d \notin \tau(\R^{\Z})$ and thus $d \notin \Omega(\tau)$. The proof is completed. 
\end{proof}
\end{example}

\begin{remark} 
Consider the complex version of Example~\ref{ex:limit-set-not-closed}, that is, 
let $\tau_{\C} \colon \C^{\Z}  \to \C^{\Z}$ be the algebraic cellular automaton over 
$(\Z, \A^1_{\C},\C)$ 
with memory set $M = \{0,1\}\subset \Z$ 
and associated local defining map $\mu_{\C} \colon \C^M \to \C$ 
defined by 
$\mu_{\C}(p) = p(1) - p(0)^2$ for all $p  \in \C^M$. 
\par 
Then the same proofs as in Claim~\ref{claim:rec-dense} and Claim~\ref{claim:nw-cr-all} 
show that $\Rec(\tau_{\C})$ is a dense non-closed subset of  $\C^{\Z}$ and that 
$\NW(\tau_{\C}) = \Crec(\tau_{\C}) = \C^{\Z}$. 
By applying Theorem~\ref{t:limit-set-not-empty}.(ii), 
we deduce that $\Omega(\tau_{\C}) = \C^{\Z}$, i.e., $\tau_{\C}$ is surjective, 
which can also be easily checked by a direct verification. 
\end{remark}

The following example shows that Assertion (v) of Theorem~\ref{t:limit-set-not-empty} becomes false if we remove the hypothesis that the ground field $K$ is algebraically closed.

\begin{example}
\label{ex:alg-ca-empty-ls}
Let $G$ be a group and let  $V\coloneqq \Spec(\R[t])=\A^1_{\R}$ denote  the affine line over $\R$.
Consider the algebraic morphism  $f \colon V \to V$ given by $t \mapsto t^2 + 1$.
Take $A \coloneqq V(\R) = \R$ and let $\tau \colon A^G \to A^G$ denote the  cellular automaton with memory set $M \coloneqq  \{1_G\}$ and associated local defining map
$\mu \colon A^M = A \to A$ given by $a \mapsto a^2 + 1$.
The cellular automaton  $\tau$ is algebraic since $\mu$ is induced by $f$ but its limit set
$\Omega(\tau)$ is clearly empty. 
Remark also that $\tau$ is not stable since otherwise $\Omega(\tau)$ would be nonempty. 
   \end{example}

The following example shows that Theorem~\ref{t:char-nilpotent-alg-ca} becomes false if we remove the hypothesis that the ground field $K$ is algebraically closed.

\begin{example}
\label{ex:alg-not-nil-ca-ls-single}
Let $G$ be a group and let $V \coloneqq \Proj^1_{\R}$ denote  the projective line over $\R$.
Consider the algebraic morphism 
$f \colon V \to V$ given by $(x \colon y) \mapsto (x^2 + y^2 \colon y^2)$.
Take $A \coloneqq V(\R) = \Proj^1(\R) = \R \cup \{\infty\}$ and let $\tau \colon A^G \to A^G$ denote the  cellular automaton with memory set $M \coloneqq  \{1_G\}$ and associated local defining map
$\mu \colon A^M = A \to A$ given by $a \mapsto a^2 + 1$.
The cellular automaton  $\tau$ is algebraic since $\mu$ is induced by $f$.
Clearly, the limit set $\Omega(\tau)$ is reduced to the constant configuration $g  \mapsto \infty$ but $\tau$ is  not nilpotent.
\end{example} 

\section{Generalizations} 
\label{s:extension}

Using basic properties of proper morphisms, 
it is not hard to see that all the results for the (H2) case  (resp. for $(\mathrm{\widetilde{H2}})$ in Theorem~\ref{t:closed-image}) 
remain valid if $V$ (resp. $V_0$) is assumed to be separated (and not necessarily complete). 
For this, it suffices to remark that 
images of morphisms from a complete algebraic variety to a separated algebraic variety (cf.~\cite[Section~3.3.1]{liu-book}) 
are Zariski closed complete subvarieties  
(cf.~\cite[Section~3.3.2]{liu-book}). 
This leads us to the following definition.   
\begin{definition}
Let $G$ be a group and let $V$ be a separated algebraic variety over a field $K$. 
Let $A \coloneqq V(K)$. 
A subset $\Sigma \subset A^G$ is called a \emph{complete algebraic sofic subshift} 
if it is the image of an algebraic subshift of finite type $\Sigma' \subset B^G$, 
where $B=U(K)$ and $U$ is a complete $K$-algebraic variety,
under an algebraic cellular automaton $\tau' \colon B^G \to A^G$.  
\end{definition}

With the above definition,  
Theorem \ref{t:noetheiran-alg-sft} can also be extended as follows without any changes in the proof.   

\begin{theorem} 
\label{t:general-descent-complete}
Let $G$ be a finitely generated group. Let $V$ be a separated algebraic variety 
over an algebraically closed field $K$.  
Let $A=V(K)$ and let $\Sigma \subset A^G$ be a complete algebraic sofic subshift. 
Then following are equivalent:  
\begin{enumerate} [\rm (a)] 
\item 
$\Sigma$ is a subshift of finite type; 
\item 
$\Sigma$ is an algebraic subshift of finite type; 
\item 
every descending sequence of  algebraic sofic subshifts of $A^G$ 
\begin{equation*} 
\Sigma_0 \supset \Sigma_1 \supset \cdots \supset \Sigma_n \supset \Sigma_{n+1} \supset  \cdots  
\end{equation*} 
such that $\bigcap_{n \geq 0} \Sigma_n = \Sigma$ eventually stabilizes. 
\end{enumerate} 
 \end{theorem}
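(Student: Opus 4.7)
The plan is to adapt the proof of Theorem~\ref{t:noetheiran-alg-sft} essentially verbatim, invoking the generalizations of Theorem~\ref{t:sofic-pro-constructible} and Corollary~\ref{c:sofic-closed} to the setting where $V$ is only separated but the source variety $U$ is still complete. The underlying algebro-geometric input is that a morphism from a complete $K$-variety to a separated $K$-variety is proper, hence has Zariski closed, complete image; consequently, the restriction $\Sigma_E$ of a complete algebraic sofic subshift remains a complete subvariety of $A^E$, and Noetherianity on $V^E$ remains available. The implication (b)$\implies$(a) is immediate from the definitions.

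For (a)$\implies$(c), suppose $\Sigma = \Sigma(D,W)$ with $D \subset G$ finite, and let $(\Sigma_n)_{n \in \N}$ be a descending sequence of complete algebraic sofic subshifts of $A^G$ with $\bigcap_n \Sigma_n = \Sigma$. Fix a symmetric finite generating set $M \subset G$ with $\{1_G\} \cup D \subset M$ and, exactly as in the proof of Theorem~\ref{t:noetheiran-alg-sft}, set $X_{ij} \coloneqq (\Sigma_j)_{M^i}$. The generalized Theorem~\ref{t:sofic-pro-constructible} ensures each $X_{ij}$ is a complete subvariety of the separated variety $A^{M^i}$. Noetherianity forces the descending sequence $(X_{0j})_{j \in \N}$ to stabilize at some $X_{0m} \eqqcolon W'$, so $\Sigma' \coloneqq \Sigma(M^m,W')$ is an algebraic subshift of finite type containing $\Sigma_m \supset \Sigma$. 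For the reverse inclusion $\Sigma' \subset \Sigma$, I follow the original inverse-system argument: for each $w \in W'$, construct $Z_{ij} \coloneqq (q_{i0} \circ \cdots \circ q_{i,j-1})^{-1}(Z_{i0})$ over $Z_{i0} \coloneqq \{x \in X_{i0} \colon x|_{M^m} = w\}$; each $Z_{ij}$ is a (nonempty) closed subvariety of the complete variety $X_{ij}$ and the transition maps have Zariski closed images since proper morphisms are closed. Lemma~\ref{l:inverse-limit-closed-im} then yields an element of $\varprojlim Z_{ij}$, which assembles into a configuration $y \in \bigcap_n \Sigma_n = \Sigma$ with $y|_{M^m} = w$. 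This shows $W' \subset \Sigma_{M^m}$, hence $\Sigma' \subset \Sigma(M^m, \Sigma_{M^m}) = \Sigma$ by Lemma~\ref{l:base-change-sft}, and the sequence $(\Sigma_n)$ stabilizes at $n = m$.

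For (c)$\implies$(a), argue by contrapositive: suppose $\Sigma$ is a complete algebraic sofic subshift that is not of finite type. For each $n \in \N$, set $W_n \coloneqq \Sigma_{M^n}$; the generalized Theorem~\ref{t:sofic-pro-constructible} gives that $W_n$ is a complete subvariety of $A^{M^n}$, so $\Sigma_n \coloneqq \Sigma(M^n, W_n)$ is an algebraic subshift of finite type, hence an algebraic sofic subshift. Clearly $\Sigma \subset \Sigma_{n+1} \subset \Sigma_n$ for every $n$; conversely, any $x \in \bigcap_n \Sigma_n$ satisfies $x|_{M^n} \in W_n = \Sigma_{M^n}$ for all $n$, so by the generalized Corollary~\ref{c:sofic-closed} (ensuring $\Sigma$ is closed) together with \eqref{e:row-limit}, $x \in \varprojlim_n \Sigma_{M^n} = \Sigma$. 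If this descending sequence stabilized, $\Sigma$ would itself be of finite type, contrary to assumption. The main obstacle is conceptual rather than technical: one must verify that every appeal to a (H2)-result in the original proof can be replaced by its extension to separated $V$ with complete source $U$, which rests uniformly on the closedness and preservation-of-completeness properties of proper morphisms.
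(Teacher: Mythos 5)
Your proposal is correct and follows essentially the same route as the paper, which itself proves this theorem by observing that the proof of Theorem~\ref{t:noetheiran-alg-sft} goes through ``without any changes'' once one notes that morphisms from a complete variety to a separated variety have Zariski closed complete images. Your explicit verification that each appeal to an (H2)-type result extends to the separated-target setting is exactly the content the paper leaves implicit.
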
 
\par 
Now, let $G$ be a group and let $V$ be an algebraic variety over a field $K$. 
Let $A=V(K)$ and let $\Sigma \subset A^G$ be a subset.  
 
\begin{definition} 
$\Sigma \subset A^G$ is called a \emph{countably-proconstructible subshift of finite type} (CPSFT) 
if there exist a finite subset $D \subset G$ and a subset $W \subset V^D$ 
which is the complement in $V^D$ of a countable number of constructible subsets (cf.~Section~\ref{s:countably-pro-constructible}), 
such that $\Sigma = \Sigma(D,W(K))$. 
Similarly,  $\Sigma \subset A^G$ is a \emph{countably-proconstructible sofic subshift} (CPS subshift) 
if it is the image of a CPSFT under an algebraic cellular automaton with range $A^G$. 
 \end{definition}
 
Our proofs actually show that  
Theorem~\ref{t:limit-set-not-empty} (except the point (iv)), Theorem~\ref{t:char-nilpotent-alg-ca} (resp. Theorem~\ref{t:char-nilpotent-finite-alg-ca})  
still hold if we replace (H1), (H2), (H3) and the assumption $\Sigma \subset A^G$ being an algebraic sofic subshift 
(resp. a topologically mixing algebraic sofic subshift)    
by a more general hypothesis: 
\begin{enumerate}[\rm (H)] 
\item 
$K$ is an uncountable algebraically closed field and 
$\Sigma \subset A^G$ is a $\CPS$ subshift  
(resp. topologically mixing $\CPS$ subshift)    
and $\tau \colon \Sigma \to \Sigma$ is an algebraic cellular automaton. 
\end{enumerate} 
In fact, it can be directly checked from our proofs 
that results for the (H1) case in Section~\ref{s:alg-space-time} (resp. Section~\ref{s:alg-sofic-shift} 
and Section~\ref{s:cip}) remain valid if we assume that $K$ is an uncountable algebraically closed field  
and $\Sigma$ is a $\CPSFT$ (resp. a $\CPS$ subshift).  
\par 
We now introduce a nontrivial class of nonempty $\CPSFT$ (cf.~Theorem~\ref{t:nonempty-full-cpsft}). 
\begin{definition} 
\label{d:cpsft}
Let $G$ be a group. 
Let $V$ be an algebraic variety over a field $K$ 
and let $A= V(K)$. 
A subshift $\Sigma \subset A^G$ is called a \emph{full $\CPSFT$} if there exist a finite subset $D \subset G$ and 
a subset $W = V^D \setminus ( \bigcup_{n \in \N} U_n)$ where each $U_n \subset V^D$ is a constructible subset 
satisfying $\dim U_n < \dim V^D$, such that $\Sigma = \Sigma(D, W(K))$. 
Here, $\dim Z$ denotes the Krull dimension of a constructible subset $Z$ (see for example \cite{ccp-goe-2020}). 
\end{definition} 
Remark that if $V$ is finite, i.e., $\dim V=0$, the conditions $\dim U_n < \dim V^D$ 
imply that $U_n = \varnothing$ for every $n \in \N$ thus $W=V^D$. 
Hence, when the alphabet is finite, the only full $\CPSFT$ is the full shift.  
\begin{example}  
\label{e:example-generalization}
If $G= \Z$, $A =  \C$, $D= \{0, 1\} \subset \Z$, $W = \C^D \setminus E$ where 
$E \subset \C^D \simeq \C^2$ is any countable union of complex algebraic curves and points, then 
$\Sigma' = \Sigma(D, W) \subset \C^\Z$ is a \emph{nonempty} full CPSFT (by Theorem~\ref{t:nonempty-full-cpsft} below). 
Let $\tau' \colon \C^\Z \to \C^\Z$ be given by 
$\tau'(x)(n) = x(n)^2 - x(n+1) +1$ for every $x \in \C^\Z, n \in \Z$, then $\Sigma \coloneqq \tau'(\Sigma')$ 
is a nonempty \emph{closed} CPS subshift of $\C^\Z$ 
(by~Theorem~\ref{t:closed-image} which is true under the condition (H)). 
Note that $\tau \coloneqq \tau'\vert_{\Sigma} \colon \Sigma \to \Sigma$ is an algebraic cellular automaton.  
\end{example} 
\begin{theorem} 
\label{t:nonempty-full-cpsft}
Let $G$ be a group. 
Let $V$ be a nonempty algebraic variety over an uncountable algebraically closed field $K$ 
and let $A= V(K)$.   
Then every full $\CPSFT$ $\Sigma \subset A^G$ is nonempty.
\end{theorem}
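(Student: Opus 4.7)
The plan is to reduce to the case of a finitely generated group $G$ and irreducible $V$, exhibit $\Sigma$ as the inverse limit of the sets of $K$-points of nonempty countably-proconstructible subsets, and then apply Lemma~\ref{l:inverse-limit-seq-const}.

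First, since $D$ is finite, the subgroup $H \subset G$ generated by $D \cup \{1_G\}$ is finitely generated. Fixing a transversal $T$ for the right cosets $G/H$ with $1_G \in T$, the inclusion $D \subset H$ forces $gD \subset tH$ whenever $g \in tH$, so the defining conditions $(g^{-1}x)\vert_D \in W(K)$ decouple coset by coset and yield $\Sigma \cong \prod_{t \in T}\Sigma_H$, where $\Sigma_H = \Sigma(D, W(K)) \subset A^H$ is the same full CPSFT viewed over $H$. In particular, $\Sigma \neq \varnothing$ if and only if $\Sigma_H \neq \varnothing$, so we may assume $G$ is finitely generated. Next, letting $V_0$ be an irreducible component of $V$ of maximal dimension $d = \dim V$, the product $V_0^D$ is an irreducible component of $V^D$ of the same dimension $d|D| = \dim V^D$, and each $U_n \cap V_0^D$ is constructible of dimension $<d|D|$. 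Since $V_0(K)^G \subset V(K)^G$, it suffices to produce a configuration in the corresponding CPSFT over $V_0$; hence we may further assume $V$ is irreducible.

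Now fix a finite symmetric generating set $M \subset G$ containing $D \cup \{1_G\}$. For each $i \in \N$, set
\[
C_i \coloneqq \bigcap_{\substack{g \in G \\ gD \subset M^i}}\ \bigcap_{n \in \N}\bigl(V^{M^i} \setminus \pi_{g,i}^{-1}(U_n)\bigr) \subset V^{M^i},
\]
where $\pi_{g,i} \colon V^{M^i} \to V^{gD}$ is the projection and $U_n$ is transported to $V^{gD}$ via the bijection $D \cong gD$. Since the inner intersection is over a countable family of constructible sets and the outer intersection is finite, $C_i$ is countably-proconstructible in $V^{M^i}$. The natural projections $V^{M^{i+1}} \to V^{M^i}$ restrict to well-defined maps $C_{i+1}(K) \to C_i(K)$, and standard unwinding (as in the proof of Lemma~\ref{l:sft-hor}) identifies $\Sigma$ with $\varprojlim_i C_i(K)$.

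The main and essentially the only nontrivial step is to verify that $C_i(K) \neq \varnothing$ for every $i$. Using the product decomposition $V^{M^i} \cong V^{gD} \times V^{M^i \setminus gD}$, under which $\pi_{g,i}^{-1}(U_n) = U_n \times V^{M^i \setminus gD}$, we obtain
\[
\dim \pi_{g,i}^{-1}(U_n) = \dim U_n + d(|M^i| - |D|) < d|M^i| = \dim V^{M^i},
\]
so $\bigcup_{g,n}\pi_{g,i}^{-1}(U_n)$ is a countable union of constructible subsets of strictly smaller dimension than the irreducible variety $V^{M^i}$. The principal obstacle is then the classical Baire-category-type fact that, over an uncountable algebraically closed field, an irreducible algebraic variety is never a countable union of proper closed subvarieties; taking Zariski closures reduces our situation to this statement (see the argument behind \cite[Lemma~B.3]{ccp-2019}, or apply Noether normalization to reduce to the elementary case of affine space). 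It follows that $C_i(K) \neq \varnothing$, and Lemma~\ref{l:inverse-limit-seq-const} applied to the inverse system $(C_i)_{i \in \N}$ yields $\varprojlim_i C_i(K) \neq \varnothing$, hence $\Sigma \neq \varnothing$.
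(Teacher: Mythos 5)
Your proof is correct and follows essentially the same route as the paper's: reduce to a finitely generated group via the coset factorization, show that each finite-window approximation $C_i$ (the paper's $A_{ij}$) is a nonempty countably-proconstructible subset because a countable union of constructible sets of strictly smaller dimension cannot exhaust the $K$-points of a variety over an uncountable algebraically closed field, and pass to the inverse limit with Lemma~\ref{l:inverse-limit-seq-const}. The only differences are cosmetic: your preliminary reduction to irreducible $V$ is harmless but not needed (the paper instead notes that every \emph{finite} intersection $\bigcap_{n\in I}(A^{M^{i+j}}\setminus Z_n)$ is nonempty by the dimension count, which works for reducible $V$ as well), and you apply the inverse-limit lemma directly to $(C_i)$ and conclude via Lemma~\ref{l:sft-hor}, whereas the paper routes through the identity $\Sigma_{ij}=\bigcap_{k\ge i}p_{ijk}(A_{kj})$ borrowed from Proposition~\ref{p:sigma-pro-constructible}.
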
 

\begin{proof} 
We write $\Sigma = \Sigma(D, W(K))$ for some finite subset $D \subset G$ and 
$W = V^D \setminus ( \bigcup_{n \in \N} U_n)$ where $U_n \subset V^D$, $n \in \N$, is a constructible subset 
such that $\dim U_n < \dim V^D$. 
In particular, $W$ is a countably-proconstructible subset of $V^D$.  
Suppose first that $G$ is finitely generated and let the notations be as in Section~\ref{s:sft}. 
Then same proof for the case (H1) of Proposition~\ref{p:sigma-pro-constructible} actually implies that 
$\Sigma_{ij} = \bigcap_{k \geq i} p_{ijk}(A_{kj})$ for $i, j \in \N$, 
where 
$
A_{ij} =  \bigcap_{g \in D_{ij}} \pi_{ij,g}^{-1} (gW) (K) \subset A^{M^{i+j}} 
$
(cf.~\eqref{e:a-ij-closed}). 
Note that $D_{ij}$ is finite and $g W \simeq W$ for all $g \in G$. 
It follows immediately that $A_{ij}$ is also a complement of a countable 
number of constructible subsets $Z_n$ such that $\dim Z_n < \dim A^{M^{i+j}}$ for every $n \in \N$. 
Hence, for every 
finite subset $I \subset \N$,  the constructible set $\bigcap_{n \in I} (A^{M^{i+j}} \setminus Z_n) \neq \varnothing$ 
by the dimensional reason. By Lemma~\ref{l:inverse-limit-seq-const}, 
we deduce that $A_{ij} =  \bigcap_{n \in \N} (A^{M^{i+j}} \setminus Z_n) \neq \varnothing$ for every $i, j \in \N$. 
Always by Lemma~\ref{l:inverse-limit-seq-const},  $\Sigma_{ij} = \bigcap_{k \geq i} p_{ijk}(A_{kj}) \neq \varnothing$ 
for all $i, j \in \N$ and thus $ \varprojlim_{i \in \N} \Sigma_{ij} \neq \varnothing$. 
Finally, the bijection $\Sigma \simeq \varprojlim_{i \in \N} \Sigma_{ij}$ (cf. \eqref{e:row-limit}) 
implies that $\Sigma \neq \varnothing$. 
\par 
For an arbitrary group $G$, let $H$ be the subgroup generated by $D$. 
Then by Lemma~\ref{l:factorization-sigma}, we have a factorization 
$\Sigma = \prod_{c \in G/H} \Sigma_c$ where the sets $\Sigma_c$ are pairwise homeomorphic. 
By the above paragraph, we know that $\Sigma_H \neq \varnothing$ and therefore 
$\Sigma \neq \varnothing$. 
\end{proof}
Theorem \ref{t:nonempty-full-cpsft} serves as a motivation for the notion of full CPSFT as we see in the following comparison with 
the finite alphabet case. It is well known that for $G= \Z^d$, $d \geq 2$, and for a finite set $A$ of cardinality at least 2, 
it is algorithmically undecidable whether the subshift of finite type $\Sigma(D,P) \subset A^G$ is nonempty 
for a given finite subset $D \in G$ and a given subset $ P \subset A^D$. 
This is known as the domino problem (cf.~\cite{aubrun-barbieri-sablik-2017}, \cite{berger-book-1966}, \cite{robinson-1971}; see
also the recent \cite{Bar-Salo}, where a notion of ``simulation'' for labelled graphs is introduced and applied to the domino problem for the Cayley graph of the lamplighter group and, more generally, to Diestel-Leader graphs). 

%SECTION 15
\appendix
\section{} 
\subsection{Limit sets and Nilpotency of general maps}
Given a set $X$, recall that a map $f \colon X \to X$ is \emph{pointwise nilpotent}
if there exists $x_0 \in X$ such that
for every $x \in X$, there exists an integer $n_0 \geq 1$ such that $f^n(x) = x_0$ for all $n \geq n_0$.
Such an $x_0$ is then the unique fixed point of $f$ and is called the \emph{terminal point} of the pointwise nilpotent map $f$.
Clearly, if $f$ is nilpotent then it is  pointwise nilpotent
and the terminal point of $f$ as a nilpotent map coincides with its terminal point as a pointwise nilpotent map.
Moreover, if $f$ is pointwise nilpotent, then its limit set is reduced to its terminal point.
When the set $X$ is finite, the three conditions (i) $f$ is nilpotent, (ii) $f$ is pointwise nilpotent,
and (iii) the limit set of $f$ is a singleton, are all equivalent.
This becomes false when $X$ is infinite.
Actually, we have the following.

\begin{lemma}
\label{l:non-nil-limit-one-pt}
Let $X$ be an infinite set.
Then the following hold:
\begin{enumerate}[\rm (i)]
\item
there exists a map $f \colon X \to X$ such that $\Omega(f) = \varnothing$;
\item
there exists a  map $f \colon X \to X$ which is not pointwise nilpotent (and hence not nilpotent)
such that $\Omega(f)$ is a singleton;
\item
 there exists a map $f \colon X \to X$ such that  $f(\Omega(f)) \subsetneqq \Omega(f)$;
\item
there exists a surjective (and hence non-nilpotent)  pointwise nilpotent map $f \colon X \to X$.
\end{enumerate}
\end{lemma}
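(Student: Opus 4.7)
The plan is to handle each of the four parts by an explicit combinatorial construction on a countable subset of $X$, extending trivially on the rest. For (i), (ii), and (iv) the constructions are simple forward-shift or decrement maps; only (iii) requires a more delicate chain structure, and that is where the main difficulty lies.

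For (i) I would fix an injection $\N \hookrightarrow X$ with image $\{x_n\}_{n \in \N}$ and define $f(x_n) = x_{n+1}$, with $f(y) = x_0$ for any $y \notin \{x_n\}$; an easy induction gives $f^n(X) \subset \{x_m : m \geq n-1\}$, whose intersection over $n$ is empty. For (ii) I would use the same shift together with an auxiliary fixed point $z \in X \setminus \{x_n\}$, sending everything outside $\{x_n\} \cup \{z\}$ to $z$; then one verifies $f^n(X) = \{x_m : m \geq n\} \cup \{z\}$, so $\Omega(f) = \{z\}$, while the orbit of $x_0$ never reaches $z$ (and no other point of $X$ can serve as a terminal point, since forward orbits under the shift never stabilise), showing that $f$ is not pointwise nilpotent. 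For (iv) I would start from the textbook example $f \colon \N \to \N$, $f(n) = \max(n-1,0)$, which is simultaneously surjective (each $n$ is hit by $n+1$) and pointwise nilpotent with terminal point $0$; to adapt this to an arbitrary infinite $X$, partition $X$ into countably infinite blocks (any infinite set admits such a partition), identify one distinguished block with $\N$ to supply the terminal point $x_0$, and on every other block $\{x_{i,n}\}_{n \in \N}$ set $f(x_{i,n}) = x_{i,n-1}$ for $n \geq 1$ and $f(x_{i,0}) = x_0$. Both surjectivity and pointwise nilpotency transfer immediately.

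The genuine obstacle is (iii), where we need an element of $\Omega(f)$ all of whose preimages lie outside $\Omega(f)$. My plan is to introduce two distinguished points $a, b \in X$ together with a family of pairwise disjoint finite chains $C_n = \{c_{n,0}, \dots, c_{n,n}\}$ of unbounded length, and to set $f(a) = a$, $f(b) = a$, $f(c_{n,k}) = c_{n,k-1}$ for $k \geq 1$, $f(c_{n,0}) = b$, and $f(y) = a$ for any remaining $y \in X$. A direct calculation shows that $(n,k) \in f^m(X)$ if and only if $k + m \leq n$, so no chain element lies in $\Omega(f)$; meanwhile $a$ is fixed, and $b$ belongs to $f^m(X)$ for every $m \geq 1$ because $f^m(c_{m-1,m-1}) = b$. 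Hence $\Omega(f) = \{a, b\}$, while $f(\Omega(f)) = \{f(a), f(b)\} = \{a\}$, yielding the required strict inclusion $f(\Omega(f)) \subsetneqq \Omega(f)$.
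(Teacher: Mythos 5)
Your constructions are correct and follow essentially the same route as the paper's own proof: a forward shift for (i), a shift together with an isolated fixed point for (ii), a family of backward chains of unbounded length feeding a non-fixed point $b$ above a fixed point $a$ for (iii), and a decrement map on countably infinite blocks for (iv). The only differences are cosmetic (e.g., the paper realizes (i) by conjugating $(n,x)\mapsto(n+1,x)$ on $\N\times X$ and extends (iii) by the identity off the chains, whereas you collapse the complement onto a basepoint), and all your verifications check out.
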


\begin{proof}
(i)
Since $X$ is infinite,
there exists a bijective map
$\psi \colon \N \times X \to X$.
Then the map $f \colon X \to X$ defined by
$f \coloneqq \psi \circ g \circ \psi^{-1}$, where $g \colon \N \times X \to \N \times X$
is given by $g(n,x) = (n + 1,x)$ for all $(n,x) \in \N \times X$,
satisfies $\Omega(f) = \Omega(g) = \varnothing$.
This shows (i).
\par
(ii)
Let $\widehat{\N} \coloneqq  \N \cup \{\infty\}$.
Since $X$ is infinite, there exists an injective map $\varphi \colon \widehat{\N} \to X$ that is not surjective.
Then the map $f \colon X \to X$ defined by $f(\varphi(n)) = \varphi(n + 1)$
for all $n \in \N$ and $f(x) = \varphi(\infty)$ for all $x \in X \setminus \varphi(\N)$
satisfies $\Omega(f) = \{\varphi(\infty)\}$ but  is clearly not pointwise nilpotent.
\par
\par
(iii)
Consider, for each $n \geq 1$, the set $I_n \coloneqq \{0,1,\dots,n\}$ and the map $g_n \colon I_n \to I_n$ given by $g_n(k) \coloneqq k - 1$ if $k \geq 1$ and $g_n(0) = 0$. 
Let $Y$ be the set obtained by taking disjoint copies of the sets $I_n$, $n \geq 1$, and identifying all copies of $0$ in a single point $y_0$ and all copies of $1$ in a single point $y_1 \not= y_0$. 
Then the maps $g_n$ induce a well defined quotient map $g \colon Y \to Y$.
Clearly, $\Omega(g) = \{y_0,y_1\}$ while $g(\Omega(g)) = \{y_0\}$.
As $X$ is infinite, the set $Y$ can be regarded as a subset of  $X$.
Then the map $f \colon X \to X$, defined by $f(x) = g(x)$ if $x \in Y$ and $f(x) = x$ otherwise,
satisfies $\Omega(f) = \{y_0,y_1\} \cup (X \setminus Y)$ while
$f(\Omega(f)) = \{y_0\} \cup (X \setminus Y) \subsetneqq \Omega(f)$.  
\par
(iv)
Choose a point $x_0 \in X$ and a bijective map $\xi \colon \N \times X \to X \setminus \{x_0\}$.
Then the map $f \colon X \to X$, defined by
$f(\xi(n,x)) = \xi(n - 1,x)$ if  $n \geq 1$
and $f(x_0) = f(\xi(0,x)) = x_0$ for all $x \in X$,
is clearly surjective and pointwise nilpotent (with terminal point $x_0$).
\end{proof}

\subsection{Limit sets and nilpotency of general  cellular automata}

\begin{proposition}
\label{p:ca-non-nil-limit-pt}
Let $A$ be an infinite set and let $G$ be a group.
Then the following hold:
\begin{enumerate}[\rm (i)]
\item
there exists a cellular automaton $\tau \colon A^G \to A^G$ with $\Omega(\tau) = \varnothing$;
\item
there exists a non-nilpotent cellular automaton  $\tau \colon A^G  \to A^G$
such that $\Omega(\tau)$ is reduced to a single configuration;
\item 
there exists a cellular automaton $\tau \colon A^G \to A^G$ which satisfies $\tau(\Omega(\tau)) \subsetneqq \Omega(\tau)$;
\item
if the group $G$ is finite then there exists a pointwise nilpotent
cellular automaton $\tau \colon A^G \to A^G$ which is not nilpotent.
\end{enumerate}
\end{proposition}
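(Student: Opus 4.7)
The strategy will be to reduce each assertion to Lemma~\ref{l:non-nil-limit-one-pt} by taking $X = A$ and promoting a self-map $\mu \colon A \to A$ to a cellular automaton $\tau \colon A^G \to A^G$ via the memory set $M = \{1_G\}$ and local defining map $\mu$, i.e., the coordinatewise action $\tau(x)(g) = \mu(x(g))$. Iteration is also coordinatewise, $\tau^n(x)(g) = \mu^n(x(g))$, so $\tau^n(A^G) = \mu^n(A)^G$ and hence
\[
\Omega(\tau) = \bigcap_{n \geq 1} \mu^n(A)^G = \left( \bigcap_{n \geq 1} \mu^n(A) \right)^{\!G} = \Omega(\mu)^G,
\]
with the analogous identity $\tau(\Omega(\tau)) = \mu(\Omega(\mu))^G$.

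For (i), I would take $\mu$ from Lemma~\ref{l:non-nil-limit-one-pt}(i) so that $\Omega(\mu) = \varnothing$; then $\Omega(\tau) = \varnothing$ since $G \neq \varnothing$. For (ii), take $\mu$ from Lemma~\ref{l:non-nil-limit-one-pt}(ii) with $\Omega(\mu) = \{a_\infty\}$ (a fixed point) and $\mu$ not pointwise nilpotent; then $\Omega(\tau) = \{a_\infty^G\}$, and $\tau$ is not nilpotent since $\tau^{n_0} \equiv a_\infty^G$ would force $\mu^{n_0} \equiv a_\infty$ by evaluating at any $g \in G$, contradicting the failure of pointwise nilpotency of $\mu$. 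For (iii), take $\mu$ from Lemma~\ref{l:non-nil-limit-one-pt}(iii) with $\mu(\Omega(\mu)) \subsetneqq \Omega(\mu)$ and pick $a \in \Omega(\mu) \setminus \mu(\Omega(\mu))$; the constant configuration $a^G$ then lies in $\Omega(\tau) \setminus \tau(\Omega(\tau))$, because $\tau(b) = a^G$ would give $a = \mu(b(g)) \in \mu(\Omega(\mu))$.

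For (iv), with $G$ finite, let $\mu$ be the surjective pointwise nilpotent map with terminal point $x_0$ produced by Lemma~\ref{l:non-nil-limit-one-pt}(iv), and let $\tau$ be the associated coordinatewise cellular automaton. Then $\tau$ is surjective as a product of surjections, and hence not nilpotent, because $A^G$ has more than one element. To see that $\tau$ is pointwise nilpotent, for a given $x \in A^G$ and each $g \in G$ choose $n(g) \geq 1$ with $\mu^n(x(g)) = x_0$ for all $n \geq n(g)$; finiteness of $G$ ensures $n_0 \coloneqq \max_{g \in G} n(g)$ is finite, so $\tau^n(x) = x_0^G$ for all $n \geq n_0$.

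No substantive obstacle is foreseen: the only subtlety is that the uniform bound $n_0$ in (iv) may not exist for an infinite group $G$, which is precisely why the finiteness hypothesis on $G$ is imposed in that case and not in (i)--(iii).
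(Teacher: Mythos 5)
Your proposal is correct and follows essentially the same route as the paper: both promote the self-maps $f\colon A\to A$ from Lemma~\ref{l:non-nil-limit-one-pt} to coordinatewise cellular automata $\tau=\prod_{g\in G}f$ with memory set $\{1_G\}$ and then read off $\Omega(\tau)=\Omega(f)^G$. Your explicit verification of the identities $\Omega(\tau)=\Omega(\mu)^G$ and $\tau(\Omega(\tau))=\mu(\Omega(\mu))^G$, and of the uniform bound in (iv), only spells out what the paper leaves implicit.
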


\begin{proof}
Given a map  $f \colon A \to A$,
we consider the cellular automaton $\tau \colon A^G \to A^G$
with memory set $M \coloneqq \{1_G\}$ and associated local defining map
$\mu \coloneqq f  \colon A = A^M \to A$, that is, $\tau = \prod_{g \in G} f$.
\par
By Lemma~\ref{l:non-nil-limit-one-pt}.(i)
there exists $f \colon A \to A$ whose limit set is empty.
Clearly,  the associated cellular automaton $\tau \colon  A^G \to A^G$ has also empty limit set, showing (i).
\par
By Lemma~\ref{l:non-nil-limit-one-pt}.(ii),
there exists  a non-nilpotent map $f \colon A \to A$ such that $\Omega(f) = \{a_0\}$ for some  $a_0 \in A$.
Then, for such a choice of $f$, the cellular automaton   $\tau \colon A^G \to A^G$ is not nilpotent
and $\Omega(\tau) = \{x_0\}$, where $x_0 \in A^G$ is the constant configuration
defined by $x_0(g) \coloneqq  a_0$ for all $g \in G$. This shows (ii). 
\par
By Lemma~\ref{l:non-nil-limit-one-pt}.(iii),
we can find a map $f \colon A \to A$ which satisfies $f(\Omega(f)) \subsetneqq \Omega(f)$. 
Then, for such a choice of $f$, the cellular automaton   $\tau \colon A^G \to A^G$ clearly satisfies
$\tau(\Omega(\tau)) \subsetneqq \Omega(\tau)$.
This shows (iii).
\par
Finally, by Lemma~\ref{l:non-nil-limit-one-pt}.(iv),
there exists a surjective map $f \colon A \to A$ which is pointwise nilpotent.
The associated cellular automaton $\tau \colon A^G \to A^G$ is surjective and hence not nilpotent.
For $G$ finite, $\tau$  is clearly pointwise nilpotent. This shows (iv).  
\end{proof}

\subsection{Nilpotency and pointwise nilpotency of general cellular automata}

\begin{lemma}
\label{l:subshift-with-interior-is-full}
Let $A$ be a set and let $G$ be a group. 
Let $\Sigma \subset A^G$ be a topologically transitive  closed subshift. 
Suppose that $X \subset \Sigma$ is a closed subshift of $A^G$ with nonempty interior in $\Sigma$.
Then one has $X = \Sigma$.
\end{lemma}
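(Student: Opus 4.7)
The plan is to show that $X$ is dense in $\Sigma$; since $X$ is closed in $A^G$ and contained in $\Sigma$, it is closed in $\Sigma$, so density will give $X = \Sigma$.

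First I would unpack the hypothesis that $X$ has nonempty interior in $\Sigma$: this means there exists a nonempty open subset $U$ of $\Sigma$ with $U \subset X$. The key structural fact to invoke is that $X$, being a subshift, is $G$-invariant under the shift action, so $gU \subset gX = X$ for every $g \in G$.

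Next I would use topological transitivity of $(\Sigma, G)$: for any nonempty open subset $V \subset \Sigma$, there exists $g \in G$ such that $gU \cap V \neq \varnothing$. Combined with $gU \subset X$, this yields $X \cap V \neq \varnothing$. Since $V$ was arbitrary, $X$ meets every nonempty open subset of $\Sigma$, i.e., $X$ is dense in $\Sigma$. As $X$ is also closed in $\Sigma$, we conclude $X = \Sigma$.

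There is no real obstacle here; the only subtle point is to make sure the notion of topological transitivity being used is the standard one (for every pair of nonempty open sets $U, V \subset \Sigma$ there exists $g \in G$ with $gU \cap V \neq \varnothing$), which is the weaker cousin of the topologically mixing condition already defined in the paper just before Theorem~\ref{t:finit-limit-set}. The argument never needs $A$ to be finite or $\Sigma$ to be compact; it uses only $G$-invariance of the subshift $X$ and the definition of transitivity.
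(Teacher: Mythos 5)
Your proof is correct and follows essentially the same route as the paper: both arguments use the $G$-invariance of $X$ (the paper phrases this as $G$-invariance of the interior of $X$ in $\Sigma$, you phrase it as $gU \subset X$ for a nonempty open $U \subset X$) together with topological transitivity to show $X$ meets every nonempty open subset of $\Sigma$, hence is dense, hence equals $\Sigma$ by closedness. The only cosmetic difference is which side of the pair of open sets you translate by $g$, which is immaterial.
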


\begin{proof} 
Let $U \subset \Sigma$ be a nonempty open subset of $\Sigma$. 
Let $V$ denote the interior of $X$ in $\Sigma$. 
Note that $V$ is $G$-invariant. 
By topological transitivity, 
there exists $g \in G$ such that
$U \cap gV \neq \varnothing$.
As $U \cap g V = U \cap V \subset U \cap X$, 
we deduce that $U \cap X \neq \varnothing$. 
Hence $X$ is dense in $\Sigma$. 
Since $X$ is also closed in $\Sigma$, we conclude that $X= \Sigma$. 
\end{proof}
 
\begin{lemma}
\label{l:weak-implies-nil}
Let $A$ be a set and let $G$ be an infinite  group. 
Let $\Sigma\subset A^G$ be a topologically mixing closed subshift of sub-finite-type. 
Suppose that $\tau \colon \Sigma \to \Sigma$ is a cellular automaton satisfying the following property:
there exists a constant configuration  $x_0 \in \Sigma$ 
such that, for every $x \in \Sigma$, there is an integer $n \geq 1$ such that $\tau^n(x) = x_0$.
Then $\tau$ is nilpotent with terminal point $x_0$.
\end{lemma}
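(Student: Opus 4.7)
My plan is to argue by contradiction: if $\tau$ were not nilpotent, I would use topological mixing of $\Sigma$ to glue together witnesses of non-nilpotency at every iterate into a single configuration $x \in \Sigma$ satisfying $\tau^n(x) \neq x_0$ for all $n \geq 1$, contradicting the hypothesis. Once $\tau$ is nilpotent, applying the hypothesis to the (unique) fixed point of $\tau$ identifies it with $x_0$, which is therefore the terminal point. I first reduce to the case $G$ countable: let $M \subset G$ contain memory sets of $\tau$ and $\Sigma$, with $1_G \in M$ and $M = M^{-1}$, and choose an infinite countable subgroup $H \subset G$ containing $M$, possible since $G$ is infinite. Using Lemma~\ref{l:factorization-sigma} and the constancy of $x_0$ (so $x_0\vert_c \in \Sigma_c$ for every coset $c$), every $y \in \Sigma_H$ extends to $\tilde y \in \Sigma$ by declaring $\tilde y\vert_c \coloneqq x_0\vert_c$ for $c \neq H$. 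From this one verifies that $\Sigma_H$ is a closed topologically mixing subshift of sub-finite-type of $A^H$ (mixing is obtained by pulling cylinders back to $\Sigma$, applying mixing there, and intersecting the finite exceptional set with $H$) and that $\tau_H$ satisfies the pointwise-nilpotency hypothesis with constant terminal $x_0\vert_H$. By Lemma~\ref{l:restriction-ls}, $\tau$ is nilpotent iff $\tau_H$ is, so we may assume $G$ is countable; enumerate $G = \{h_1, h_2, \ldots\}$.

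Now suppose $\tau$ is not nilpotent. For each $n \geq 1$ pick $y_n \in \Sigma$ with $\tau^n(y_n) \neq x_0$; after $G$-translating we may assume $\mu_n(y_n\vert_{M^n}) = \tau^n(y_n)(1_G) \neq a_0$, where $a_0$ is the constant value of $x_0$ and $\mu_n$ is the local rule of $\tau^n$ with memory set $M^n$. I construct inductively configurations $x_n \in \Sigma$, elements $g_n \in G$, and finite sets $F_n \subset G$ such that: (i) $F_n \supset \{h_1, \ldots, h_n\} \cup g_1 M \cup \cdots \cup g_n M^n$ with the blocks $g_k M^k$ pairwise disjoint; (ii) $x_{n+1}\vert_{F_n} = x_n\vert_{F_n}$; (iii) $(g_k^{-1} x_n)\vert_{M^k} = y_k\vert_{M^k}$ for $1 \leq k \leq n$. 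At step $n+1$, topological mixing of $\Sigma$ applied to the nonempty open sets $U = \{z \in \Sigma : z\vert_{F_n \cup \{h_{n+1}\}} = x_n\vert_{F_n \cup \{h_{n+1}\}}\}$ and $V = \{z \in \Sigma : z\vert_{M^{n+1}} = y_{n+1}\vert_{M^{n+1}}\}$ yields a finite $E \subset G$ with $U \cap gV \neq \varnothing$ for every $g \notin E$; after further excluding the finitely many $g$ with $gM^{n+1} \cap (F_n \cup \{h_{n+1}\}) \neq \varnothing$, I pick $g_{n+1}$, any $x_{n+1} \in U \cap g_{n+1} V$, and set $F_{n+1} \coloneqq F_n \cup \{h_{n+1}\} \cup g_{n+1} M^{n+1}$.

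Properties (i) and (ii) guarantee that $(x_n)$ converges pointwise in $A^G$ to some $x$, which lies in $\Sigma$ by closedness; property (iii) and the local-rule formula then yield $\tau^n(x)(g_n) = \mu_n(y_n\vert_{M^n}) \neq a_0$, so $\tau^n(x) \neq x_0$ for every $n$, contradicting the hypothesis. The main technical obstacle is the simultaneous management at each inductive step of the accumulated past constraints encoded by $F_n$ and the new mixing-translate constraint at $g_{n+1}M^{n+1}$: this works because both the topological-mixing exclusion and the disjointness condition $gM^{n+1} \cap (F_n \cup \{h_{n+1}\}) = \varnothing$ are cofinite on $g_{n+1}$. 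A secondary care point is verifying in the reduction step that $\Sigma_H$ inherits closedness, sub-finite-type presentation, topological mixing, and the pointwise-nilpotency hypothesis; all four rest on the availability of the canonical extension $y \mapsto \tilde y$ afforded by the constancy of $x_0$.
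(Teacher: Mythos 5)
Your proof is correct, but for the core (countable) case it takes a genuinely different route from the paper. The paper's argument is non-constructive: it writes $\Sigma = \bigcup_{n \geq 1} (\tau^n)^{-1}(x_0)$ as a countable union of closed subshifts, invokes the Baire category theorem (available because $\Sigma$ is a closed subset of the completely metrizable space $A^G$ for $G$ countable) to find some $(\tau^{n_0})^{-1}(x_0)$ with nonempty interior, and then concludes $(\tau^{n_0})^{-1}(x_0) = \Sigma$ via the observation that a closed subshift with nonempty interior in a topologically \emph{transitive} closed subshift must be everything (Lemma~\ref{l:subshift-with-interior-is-full}). You instead argue contrapositively by an explicit gluing: assuming $\tau$ is not nilpotent, you use the full strength of topological \emph{mixing} (the cofinite set of admissible translates is what lets you dodge the finitely many already-constrained coordinates at each stage) to assemble witnesses $y_n$ of non-nilpotency at every order $n$ into a single configuration $x \in \Sigma$ with $\tau^n(x) \neq x_0$ for all $n$, contradicting the hypothesis. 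The trade-off is clear: the paper's route is shorter and only consumes transitivity of $\Sigma$ (mixing enters solely to guarantee transitivity when $G$ is infinite), while yours is longer but constructive and avoids Baire category entirely; both use countability of $G$ essentially (metrizability for Baire in the paper, the exhaustion $G = \bigcup_n F_n$ for you) and hence both need the same reduction to a countable subgroup $H$ via the sub-finite-type hypothesis and Lemmas~\ref{l:factorization-sigma} and~\ref{l:restriction-ls}, which you carry out exactly as the paper does. All the steps you flag as delicate (the $G$-translation normalizing the witness to $\tau^n(y_n)(1_G) \neq a_0$, the finiteness of the excluded set of translates, the inheritance of mixing and of the pointwise hypothesis by $\Sigma_H$ via the extension-by-$x_0$) check out.
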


\begin{proof}
Suppose first that $G$ is countable.
As $A^G$ is a countable product of discrete spaces, it admits a complete metric compatible with its topology.
Since $\Sigma$ is closed in $A^G$, 
it follows that the topology induced on  $\Sigma$ is completely metrizable 
 and hence that $\Sigma$ is a Baire space. For each integer $n \geq 1$,  the set
\[ 
X_n \coloneqq (\tau^n)^{-1}(x_0) = \{x \in \Sigma : \tau^n(x) = x_0\}  
\]
 is a closed subshift of $A^G$.
We have $\Sigma = \bigcup_{n \geq 1} X_n$ by our hypothesis on $\tau$.
By the Baire category theorem,
there is an integer $n_0 \geq 1$ such that $X_{n_0}$ has a nonempty interior.
The subshift $\Sigma$   is topologically mixing and therefore topologically transitive since $G$ is infinite.  
It follows that  $X_{n_0} = \Sigma$ by Lemma~\ref{l:subshift-with-interior-is-full}.
 Thus $\tau^{n_0}(x) = x_0$ for all $x \in \Sigma$.
 This shows that $\tau$ is nilpotent with terminal point $x_0$.
 Note that we have  not used the hypothesis that $\Sigma$ is of sub-finite-type in this part of the proof. 
 \par
 Let us treat now the general case.
 Suppose that $G$ is an infinite (possibly uncountable) group. 
 Let $M \subset G$ be a finite memory set for both $\tau$ and $\Sigma$. 
 As $G$ is infinite, there exists an infinite countable subgroup $H \subset G$ containing $M$.  
Let $\tau_H \colon \Sigma_H \to \Sigma_H$ denote the restriction cellular automaton (cf.~Section~\ref{s:restriction-ls}).
Thanks to the decompositions $\tau = \prod_{c \in G/H} \tau_c$ and $\Sigma = \prod_{c \in G/H} \Sigma_c$ 
where $\tau_c \colon \Sigma_c \to \Sigma_c$ (cf.~Section~\ref{s:restriction-ls}), 
it is not hard to see that $\Sigma_H$ and $\tau_H$ satisfy similar hypotheses as 
$\Sigma$ and $\tau$ 
with the constant terminal point $x_0\vert_H$. 
Remark that $\Sigma_H$ is topologically mixing since $H$ is infinite and 
$\Sigma$ is topologically mixing.  
Hence, $\tau_H$ is nilpotent by the above paragraph.  
Therefore, $\tau$ is itself nilpotent by Lemma~\ref{l:restriction-ls}.(ii).  
\end{proof}

The following result is well known,  at least in the case of full shifts with finite alphabets 
(cf.~\cite[Proposition~2]{guillon-richard-2008},  \cite[Proposition~1]{salo-nilpotent-2012}, \cite{meyerovitch-salo-2019}). 

\begin{proposition}
\label{p:carct-nilp-ca} 
Let $A$ be a set and let $G$ be an infinite group. 
Let $\Sigma \subset A^G$ be a topologically mixing closed subshift of sub-finite-type. 
Suppose that $\tau \colon \Sigma \to \Sigma$ is a cellular automaton.
Then the following conditions are equivalent:
\begin{enumerate}[\rm(i)]
\item
$\tau$ is nilpotent;
\item
$\tau$ is pointwise nilpotent;
\item
there exists a constant configuration  $x_0 \in \Sigma$ such that, for every $x \in \Sigma$, 
there is an integer $n \geq 1$ such that $\tau^n(x) = x_0$. 
\end{enumerate}\end{proposition}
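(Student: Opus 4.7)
My plan is to prove the equivalences through the circular chain (i)$\implies$(ii)$\implies$(iii)$\implies$(i). The first two implications are essentially definitional, while (iii)$\implies$(i) is the only substantive step; fortunately, the paper has already isolated that content as Lemma~\ref{l:weak-implies-nil}, so my task reduces to checking that its hypotheses match (iii).

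First I would dispatch (i)$\implies$(ii). If $\tau^{n_0}$ equals the constant map with image $\{x_0\}$, then evaluating $\tau^{n_0+1}(x) = \tau(x_0)$ and $\tau^{n_0+1}(x) = x_0$ forces $\tau(x_0)=x_0$, from which $\tau^n(x)=x_0$ for all $n \geq n_0$ and all $x$, i.e., pointwise nilpotency with terminal point $x_0$.

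Next I would show (ii)$\implies$(iii). The only additional thing to verify beyond pointwise nilpotency is that the terminal point $x_0$ is a \emph{constant} configuration, so that (iii) literally applies. For this, note that $x_0$ is the unique fixed point of $\tau$ (any $y$ with $\tau(y)=y$ satisfies $y=\tau^n(y)=x_0$ for $n$ large), and then $G$-equivariance of $\tau$ yields $\tau(gx_0)=g\tau(x_0)=gx_0$ for all $g\in G$, whence $gx_0=x_0$ by uniqueness. Thus $x_0\in\Fix(G)$, i.e., $x_0$ is constant, and (iii) holds.

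Finally, for (iii)$\implies$(i), I would invoke Lemma~\ref{l:weak-implies-nil} directly: its hypotheses require $G$ infinite, $\Sigma\subset A^G$ a topologically mixing closed subshift of sub-finite-type, $\tau\colon\Sigma\to\Sigma$ a cellular automaton, and a constant $x_0\in\Sigma$ absorbed by every orbit in finite time, which are precisely the standing hypotheses of the proposition together with (iii); its conclusion is that $\tau$ is nilpotent with terminal point $x_0$. The one point requiring any attention here is merely to confirm that (iii) matches the lemma's statement verbatim. The genuine difficulty, as expected, is entirely concentrated in that lemma, whose proof combines the restriction technique (to reduce to a countable subgroup $H\supset M$ so that $\Sigma_H$ is metrizable) with a Baire category argument and the topological mixing property via Lemma~\ref{l:subshift-with-interior-is-full}; since that machinery is already in place, no further obstacle arises in the proof of the proposition itself.
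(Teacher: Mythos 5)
Your proposal is correct and follows exactly the paper's own route: the chain (i)$\implies$(ii)$\implies$(iii)$\implies$(i), with (ii)$\implies$(iii) obtained from $G$-equivariance (the terminal point being the unique fixed point, hence $G$-invariant and therefore constant) and (iii)$\implies$(i) delegated entirely to Lemma~\ref{l:weak-implies-nil}. You merely spell out the easy implications in more detail than the paper does; there is nothing to add or correct.
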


\begin{proof}
The implication (i)$\implies$(ii) is obvious and (ii)$\implies$(iii)
immediately follows from $G$-equivariance of $\tau$. 
The implication (iii)$\implies$(i) follows from Lemma~\ref{l:weak-implies-nil}.
\end{proof} 

\begin{remark}
The equivalences (i)$\iff$(ii)$\iff$(iii) hold trivially true when $A$ and $G$ are both finite.
The implication (i)$\implies$(ii) and the equivalence (ii)$\iff$(iii) remain valid for $G$ finite.
However, it follows from Proposition~\ref{p:ca-non-nil-limit-pt}.(iv) that the implication (ii)$\implies$(i) 
becomes false for $A$ infinite and $G$ finite.
\end{remark}

\bibliographystyle{siam}

\begin{thebibliography}{10}

\bibitem{A-L}
{\sc S.~l.~O. Aanderaa and H.~R. Lewis}, {\em Linear sampling and the {$\forall
  \exists \forall $} case of the decision problem}, J. Symbolic Logic, 39
  (1974), pp.~519--548.

\bibitem{aubrun-barbieri-sablik-2017}
{\sc N.~Aubrun, S.~Barbieri, and M.~Sablik}, {\em A notion of effectiveness for
  subshifts on finitely generated groups}, Theoret. Comput. Sci., 661 (2017),
  pp.~35--55.

\bibitem{ax-injective}
{\sc J.~Ax}, {\em Injective endomorphisms of varieties and schemes}, Pacific J.
  Math., 31 (1969), pp.~1--7.

\bibitem{ballier-phd-2009}
{\sc A.~Ballier}, {\em Propri\'et\'e structurelle, combinatoires et logiques
  des pavages}, Ph.D. thesis. Aix-Marseille Universit\'e,  (2009).

\bibitem{ballier-durand-jeandel-2008}
{\sc A.~Ballier, B.~Durand, and E.~Jeandel}, {\em Structural aspects of
  tilings}, in S{TACS} 2008: 25th {I}nternational {S}ymposium on {T}heoretical
  {A}spects of {C}omputer {S}cience, vol.~1 of LIPIcs. Leibniz Int. Proc.
  Inform., Schloss Dagstuhl. Leibniz-Zent. Inform., Wadern, 2008, pp.~61--72.

\bibitem{Bar-Salo}
{\sc L.~Bartholdi and V.~Salo}, {\em Simulations and the lamplighter group}, to appear in
  Groups, Geometry, and Dynamics. arXiv:2010.14299.

\bibitem{berger-book-1966}
{\sc R.~Berger}, {\em The undecidability of the domino problem}, Mem.\ Amer.\  Math.\ Soc., 66 (1966), p.~72.

\bibitem{boyle-buzzi-gomez-2006}
{\sc M.~Boyle, J.~Buzzi, and R.~G\'{o}mez}, {\em Almost isomorphism for
  countable state {M}arkov shifts}, J. Reine Angew. Math., 592 (2006),
  pp.~23--47.

\bibitem{book}
{\sc T.~Ceccherini-Silberstein and M.~Coornaert}, {\em Cellular automata and
  groups}, Springer Monographs in Mathematics, Springer-Verlag, Berlin, 2010.

\bibitem{csc-algebraic}
\leavevmode\vrule height 2pt depth -1.6pt width 23pt, {\em On algebraic
  cellular automata}, J. Lond. Math. Soc. (2), 84 (2011), pp.~541--558.

\bibitem{csc-cat}
\leavevmode\vrule height 2pt depth -1.6pt width 23pt, {\em Surjunctivity and
  reversibility of cellular automata over concrete categories}, in Trends in
  harmonic analysis, vol.~3 of Springer INdAM Ser., Springer, Milan, 2013,
  pp.~91--133.

\bibitem{ccp-2019}
{\sc T.~Ceccherini-Silberstein, M.~Coornaert, and X.~K. Phung}, {\em On
  injective endomorphisms of symbolic schemes}, Comm. Algebra, 47 (2019),
  pp.~4824--4852.

\bibitem{ccp-goe-2020}
\leavevmode\vrule height 2pt depth -1.6pt width 23pt, {\em On the {G}arden of
  {E}den theorem for endomorphisms of symbolic algebraic varieties}, Pacific J.
  Math., 306 (2020), pp.~31--66.

\bibitem{JPAA}
\leavevmode\vrule height 2pt depth -1.6pt width 23pt, {\em On linear
  shifts of finite type and their endomorphisms}, J. Pure Appl. Algebra, 226
  (2022), pp.~Paper No. 106962, 27 pp.

\bibitem{culik-limit-sets-1989}
{\sc K.~Culik, II, J.~Pachl, and S.~Yu}, {\em On the limit sets of cellular
  automata}, SIAM J. Comput., 18 (1989), pp.~831--842.

\bibitem{cyr-kra-space-2019}
{\sc V.~Cyr, J.~Franks, and B.~Kra}, {\em The spacetime of a shift
  endomorphism}, Trans. Amer. Math. Soc., 371 (2019), pp.~461--488.

\bibitem{gromov-esav}
{\sc M.~Gromov}, {\em Endomorphisms of symbolic algebraic varieties}, J. Eur.
  Math. Soc. (JEMS), 1 (1999), pp.~109--197.

\bibitem{ega-1}
{\sc A.~Grothendieck}, {\em \'{E}l\'{e}ments de g\'{e}om\'{e}trie
  alg\'{e}brique. {I}. {L}e langage des sch\'{e}mas}, Inst. Hautes \'{E}tudes
  Sci. Publ. Math.,  (1960), p.~228.

\bibitem{grothendieck-20-1964}
\leavevmode\vrule height 2pt depth -1.6pt width 23pt, {\em \'{E}l\'ements de
  g\'eom\'etrie alg\'ebrique. {IV}. \'{E}tude locale des sch\'emas et des
  morphismes de sch\'emas. {I}}, Inst. Hautes \'Etudes Sci. Publ. Math.,
  (1964), p.~259.

\bibitem{grothendieck-4-3}
\leavevmode\vrule height 2pt depth -1.6pt width 23pt, {\em \'{E}l\'ements de
  g\'eom\'etrie alg\'ebrique. {IV}. \'{E}tude locale des sch\'emas et des
  morphismes de sch\'emas. {III}}, Inst. Hautes \'Etudes Sci. Publ. Math.,
  (1966), p.~255.

\bibitem{guillon-richard-2008}
{\sc P.~Guillon and G.~Richard}, {\em Nilpotency and limit sets of cellular
  automata}, in Mathematical foundations of computer science 2008, vol.~5162 of
  Lecture Notes in Comput. Sci., Springer, Berlin, 2008, pp.~375--386.

\bibitem{kari-nilpotency-1992}
{\sc J.~Kari}, {\em The nilpotency problem of one-dimensional cellular
  automata}, SIAM J. Comput., 21 (1992), pp.~571--586.

\bibitem{kitchens-book}
{\sc B.~P. Kitchens}, {\em Symbolic dynamics}, Universitext, Springer-Verlag,
  Berlin, 1998.
\newblock One-sided, two-sided and countable state Markov shifts.

\bibitem{lima-sarig-2019}
{\sc Y.~Lima and M.~Sarig}, {\em Symbolic dynamics for three-dimensional flows
  with positive topological entropy}, J. Eur. Math. Soc. (JEMS), 21 (2019),
  pp.~199--256.

\bibitem{lind-marcus}
{\sc D.~Lind and B.~Marcus}, {\em An introduction to symbolic dynamics and
  coding}, Cambridge University Press, Cambridge, 1995.

\bibitem{liu-book}
{\sc Q.~Liu}, {\em Algebraic geometry and arithmetic curves}, vol.~6 of Oxford
  Graduate Texts in Mathematics, Oxford University Press, Oxford, 2002.
\newblock Translated from the French by Reinie Ern\'{e}, Oxford Science
  Publications.

\bibitem{meyerovitch-salo-2019}
{\sc T.~Meyerovitch and V.~Salo}, {\em On pointwise periodicity in tilings,
  cellular automata, and subshifts}, Groups Geom. Dyn., 13 (2019),
  pp.~549--578.

\bibitem{milne-group-book}
{\sc J.~S. Milne}, {\em Algebraic groups}, vol.~170 of Cambridge Studies in
  Advanced Mathematics, Cambridge University Press, Cambridge, 2017.
\newblock The theory of group schemes of finite type over a field.

\bibitem{milnor-ca-1988}
{\sc J.~Milnor}, {\em On the entropy geometry of cellular automata}, Complex
  Systems, 2 (1988), pp.~357--385.

\bibitem{moore}
{\sc E.~F. Moore}, {\em Machine models of self-reproduction}, vol.~14 of Proc.
  Symp. Appl. Math., American Mathematical Society, Providence, 1963,
  pp.~17--34.

\bibitem{myhill}
{\sc J.~Myhill}, {\em The converse of {M}oore's {G}arden-of-{E}den theorem},
  Proc. Amer. Math. Soc., 14 (1963), pp.~685--686.
	
\bibitem{neumann-book}
{\sc J.~von Neumann}, {\em Theory of self-reproducing automata}, Univerity of
  Illinois Press, 1966. \newblock (A.W.\ Burks, ed.).

\bibitem{osipenko-2007}
{\sc G.~Osipenko}, {\em Dynamical systems, graphs, and algorithms}, vol.~1889
  of Lecture Notes in Mathematics, Springer-Verlag, Berlin, 2007.
\newblock Appendix A by N. B. Ampilova and Appendix B by Danny Fundinger.

\bibitem{phung-group-2020}
{\sc X.~K. Phung}, {\em On dynamical finiteness properties of algebraic group
  shifts}, to appear in Israel Journal of Mathematics. arXiv:2010.04035.

\bibitem{phung-2018}
\leavevmode\vrule height 2pt depth -1.6pt width 23pt, {\em On sofic groups,
  {K}aplansky's conjectures, and endomorphisms of pro-algebraic groups}, J.
  Algebra, 562 (2020), pp.~537--586.
	
\bibitem{phung-dcds}
\leavevmode\vrule height 2pt depth -1.6pt width 30pt,
{\em Shadowing for families of endomorphisms of generalized group shifts},  
Discrete \& Continuous Dynamical Systems, 2022, 42 (1): pp.~285--299.

\bibitem{phung-post-surjective}
\leavevmode\vrule height 2pt depth -1.6pt width 30pt,
{\em On symbolic group varieties and dual surjunctivity}, 
to appear in Groups, Geometry, and Dynamics. arXiv:2111.02588 
 

\bibitem{robinson-1971}
{\sc R.~Robinson}, {\em Undecidability and nonperiodicity for tilings of the
  plane}, Invent. Math., 12 (1971), pp.~177--209.

\bibitem{salo-nilpotent-2012}
{\sc V.~{Salo}}, {\em {On Nilpotency and Asymptotic Nilpotency of Cellular
  Automata}}, in Cellular Automata and Discrete Complex Systems and 3rd
  international symposium Journ\'ees Automates Cellulaires, AUTOMATA \& JAC
  2012, La Marana, Corsica, 2012, pp.~86--96.

\bibitem{salo-nil-2017}
{\sc V.~Salo}, {\em Strict asymptotic nilpotency in cellular automata}, in
  Cellular automata and discrete complex systems, vol.~10248 of Lecture Notes
  in Comput. Sci., Springer, Cham, 2017, pp.~3--15.

\bibitem{sarig-1999}
{\sc M.~Sarig}, {\em Thermodynamic formalism for countable {M}arkov shifts},
  Ergodic Theory Dynam. Systems, 19 (1999), pp.~1565--1593.

\bibitem{sarig-2013}
\leavevmode\vrule height 2pt depth -1.6pt width 23pt, {\em Symbolic dynamics
  for surface diffeomorphisms with positive entropy}, J. Amer. Math. Soc., 26
  (2013), pp.~341--426.

\bibitem{shub-global-stability}
{\sc M.~Shub}, {\em Global stability of dynamical systems}, Springer-Verlag,
  New York, 1987.
\newblock With the collaboration of A. Fathi and R. Langevin, Translated from
  the French by Joseph Christy.

\bibitem{vakil}
{\sc R.~Vakil}, {\em {\itshape MATH 216: Foundations of Algebraic Geometry}}.

\bibitem{wolfram-univ-1984}
{\sc S.~Wolfram}, {\em Universality and complexity in cellular automata},
  vol.~10, 1984, pp.~1--35.
\newblock Cellular automata (Los Alamos, N.M., 1983).

\bibitem{wolfram-new-kind}
\leavevmode\vrule height 2pt depth -1.6pt width 23pt, {\em A new kind of
  science}, Wolfram Media, Inc., Champaign, IL, 2002.

\end{thebibliography}
\def\cprime{$'$}

\end{document}